\def\al{\alpha}
\def\be{\beta}
\def\ga{\gamma}
\def\de{\delta}
\def\la{\lambda}
\def\om{\omega}
\newcommand{\ti}{\tilde}
\newcommand{\C}{\mathbb{C}}
\newcommand{\N}{\mathbb{N}}
\def\cB{\mathcal{B}}
\def\cC{\mathcal{C}}
\newcommand{\cH}{\mathcal{H}}
\def\cL{\mathcal{L}}
\def\cO{\mathcal{O}}
\def\cP{\mathcal{P}}
\def\cV{\mathcal{V}}
\def\cc#1{_{(#1)}}
\def\norm#1{\lVert#1\rVert}
\def\om{\omega}
\def\ftoday{le \space\number\day \space\ifcase\month\or
  janvier\or f\'evrier\or mars\or avril\or mai\or juin\or
  juillet\or ao\^ut\or septembre\or octobre\or novembre\or d\'ecembre\fi
  \space\number\year}
\def\real{I\kern-0.20em R}
\def\integer{I\kern-0.20em N}
\def\relative{{\rm \rlap Z\kern 2.2pt Z}}
\def\cc{\kern-.25em{\c c}}
\def\bc{\begin{center}}
\def\ec{\end{center}}
\def\=def{\stackrel{{\rm def}}{=}}
\newcounter{indconst}
\newcounter{auxconst}
\def\bit{\begin{itemize}}
\def\eit{\end{itemize}}
\def\ben{\begin{enumerate}}
\def\een{\end{enumerate}}
\def\bde{\begin{description}}
\def\ede{\end{description}}
\def\beq{\begin{equation}}
\def\eeq{\end{equation}}
\def\bfi{\begin{figure}[hbt] \begin{center}}
\def\efi{\end{center} \end{figure}}
\def\bce{\begin{center}}
\def\ece{\end{center}}
\newtheorem {theo} {Theorem}[section]
\newtheorem {prop}[theo]{Proposition}
\newtheorem {defi}[theo] {Definition}
\newtheorem {exam}[theo] {Example}
\renewcommand{\Im}[1]{ {\cal I}m  \left ( #1 \right )}
\renewcommand{\epsilon}{\varepsilon}
\newcommand{\Id}{{\rm Id}}
\newcommand{\ssi}[1]{si et seulement si}
\newcommand{\Norme}[2]%
  {\left |  #1 \right |_{\hspace{-0.9ex}\raisebox{-0.5ex}{ $\scriptstyle
{#2}$}}}
\newcommand{\NNorme}[2]%
{\left \|  #1 \right \|_{\hspace{-0.9ex}\raisebox{-0.5ex}{ $\scriptstyle
#2  $}}}
\newcommand{\norme}[2]%
  {\left |  #1 \right |_{\hspace{-0.9ex}\raisebox{-0.5ex}{ $\scriptscriptstyle
{#2}$}}}
\newcommand{\nnorme}[2]%
{\left \|  #1 \right \|_{\hspace{-0.9ex}\raisebox{-0.5ex}{ $\scriptscriptstyle
#2  $}}}
\newcommand{\sNorme}[2]%
  { |  #1 |_{\hspace{-0.9ex}\raisebox{-0.5ex}{ $\scriptstyle #2$}}}
\newcommand{\sNNorme}[2]%
{ \|  #1  \|_{\hspace{-0.9ex}\raisebox{-0.5ex}{ $\scriptstyle #2$}}}
\newcommand{\bNorme}[2]%
  { \Bigl |  #1 \Bigr |_{\hspace{-0.9ex}\raisebox{-0.5ex}{ $\scriptstyle #2$}}}
\newcommand{\bNNorme}[2]%
{ \Bigl \|  #1 \Bigr   \|_{\hspace{-0.9ex}\raisebox{-0.5ex}{ $\scriptstyle #2$}}}
\newcommand{\scalxx}[2]{\left \langle #1, #2 \right   \rangle}
\newcommand{\scalx}[3]%
{\scalxx{#1}{#2}_{\hspace{-0.4ex}\raisebox{-0.3ex}{$\scriptstyle {#3}$}}}
\newcommand{\Ucal}{{\cal U}}
\newcommand{\B}{{\bf B}}
\newcommand{\slt}{\sll(2,\Bbb C)}
\newcommand{\id}{\textrm{id}}
\newcommand{\inp}[1]{\left< #1\right>}
\newcommand{\p}[2]{\frac{\partial #1}{\partial #2}}
\newcommand{\pp}[1]{\frac{\partial}{\partial #1}}
\DeclareMathOperator{\boxtildeinv}{\widetilde{\square}^{-1}}
\DeclareMathOperator{\boxtilde}{\widetilde{\square}}
\DeclareMathOperator*{\Ker}{Ker}
\DeclareMathOperator*{\Span}{Span}
\DeclareMathOperator*{\sll}{\frak{sl}}
\renewcommand{\Im}{\operatorname{Im}}
\newtheorem{theorem}{Theorem}[section]
\newtheorem{definition}[theorem]{Definition}
\newtheorem{proposition}[theorem]{Proposition}
\newtheorem{lemma}[theorem]{Lemma}
\newtheorem{corollary}[theorem]{Corollary}
\newtheorem{remark}[theorem]{Remark}
\title{Holomorphic normal form of nonlinear perturbations of nilpotent vector fields}
\author{Laurent Stolovitch \thanks{CNRS, Laboratoire J.-A. Dieudonn\'e
U.M.R. 6621, Universit\'e de Nice - Sophia Antipolis, Parc Valrose
06108 Nice Cedex 02, France. email : {\tt stolo@unice.fr} Research of L. Stolovitch was supported by ANR grant ``ANR-10-BLAN 0102'' for the project DynPDE} and  
Freek Verstringe \thanks{Royal Observatory of Belgium, Ringlaan 3,1180 Brussels, Belgium. email : {\tt freek.verstringe@oma.be}}}
\begin{document}

\maketitle

\begin{abstract} 
We consider germs of holomorphic vector fields at a fixed point having a nilpotent linear part at that point, in dimension $n\geq 3$. Based on Belitskii's work, we know that such a vector field is formally conjugate to a (formal) normal form. We give a condition on that normal form which ensure that the normalizing transformation is holomorphic at the fixed point. We shall show that this sufficient condition is a {\it nilpotent version} of Bruno's condition (A). In dimension 2, no condition is required since, according to  Str{\'o}{\.z}yna-{\.Z}o{\l}adek, each such germ is holomorphically conjugate to a Takens normal form. Our proof is based on Newton's method and $\frak{sl}_2(\Bbb C)$-representations.

\end{abstract}
\section{Introduction}
In this article, we consider germs of holomorphic vector fields in a neighborhood of a fixed point in $\Bbb C^n$, $n\geq 2$. We are interested in the local classification under the action of the group of germs of biholomorphisms preserving the fixed point, which we may assume to be the origin. In the sequel, ``germ'' of vector field or map refers to a holomorphic germ at the origin.
The idea is to simplify, by a change of coordinates, the system of differential equations in order to better understand its dynamics. An elementary instance of such a problem is the following~: Let $A$ be a $n\times n$-matrix with complex coefficients. In order the understand the orbits $\{A^kz\}_{k\in \Bbb Z}$, $z$ near the fixed point $0$ of $A$, it is much easier to transform $A$ into its Jordan normal form $J$ by a change of coordinates $P$, $A=PJP^{-1}$. Then, one studies $\{J^ky\}_{k\in \Bbb Z}$, $Py=z$ and pulls back the information to the original problem. It was the idea of Poincar\'e to develop this point of view for vector fields. The difficulty is that the Lie algebra of germs of vector fields is infinite dimensional. The linear transformations are replaced by germs of diffeomorphisms fixing the origin. The special representant of the orbit of the action that plays the role of the Jordan normal form is called a normal form. A precise definition is given below.  This idea has been extensively developed by Arnold and his school, Bruno, Moser, Ecalle, Martinet-Ramis, Yoccoz, \ldots in the case where the linear part at the fixed point is a semi-simple matrix. 
The issue is that, although conjugacy to a formal normal form can always be obtained by a suitable formal transformation, it may not be possible to reach a normal form by a holomorphic transformation at the fixed point.

In this article, we investigate the normal form problem, in dimension $n\geq 3$, 
for holomorphic vector fields with a nilpotent linear part at the fixed point (assumed to be the origin). We give a sufficient condition that ensures that a germ of holomorphic vector field can be holomorphically conjugated to a normal form.

\subsection{Formal normal forms}

Following G. Belitskii \cite{Belitskii-formal, Belitskii-filtering}, E. Lombardi and L. Stolovitch defined in \cite{stolo-lombardi}, a notion of normal form of higher order perturbations of a given quasi-homogeneous vector field $S$ and the associated notion of ``generalized resonances'' (these are obstructions to find a formal power series transformation conjugating the perturbation of $S$ back to $S$ itself). Here, we shall first focus on the case where $S$ is linear, and then on the case where $S$ is nilpotent. 

Let $\cH_k$ be the space of homogeneous vector fields of degree $k$. 
For each natural number $k\geq 2$, we consider the ``cohomological
operator'' $d_0:=[S,.]:\cH_k\rightarrow \cH_{k}$ associated to $S$. Here, $[S,.]$ denotes the Lie bracket with $S$.
We define a space of normal forms of degree $k$ to be a supplementary space $\cC_k$ to $Im d_{0|\cH_k}$. We can show that there exists a formal transformation that conjugates each perturbation of $S$ of order $\geq 2$, $X:=S+R_{\geq2}$, to a formal vector field $NF:=S+\sum_{k\geq 2}v_k$ where $v_k\in \cC_k$ for each $k$. We call $NF$ a normal form (of ``style'' $\hat\cC=\oplus\cC_j$, see \cite{murdock-hyper}). Indeed, by induction on the the degree $k\geq 2$, assume that $X$ is normalized up to order $k-1$, that is $X=NF^{k-1}+R_{\geq k}$ and $NF^{k-1}\in \oplus_{j=2}^{k-1}\cC_j$. Let $\pi_{\text{Im }d_0}$ denote the projection onto the range of $d_0$ along $\cC_k$. Let us choose $U_k\in \cH_k$ to solve the {\bf cohomological equation} $d_0(U_k)=-\pi_{\text{Im }d_0}R_{k}$, where $R_k$ denotes the homogeneous polynomial of degree $k$ of $R_{\geq k}$. Then $(I-\pi_{\text{Im }d_0})R_{k}\in \cC_k$ and we have,  
$$
(\id+U_k)^{-1}_*X= NF_{k-1}+[S,U_k]+R_{k}+h.o.t.= NF_{k}+R_{>k},.
$$
where $NF_{k}:=NF_{k-1}+ (I-\pi_{\text{Im }d_0})R_{k}$. Here, $\Phi_*X:=D\Phi(\Phi^{-1})X(\Phi^{-1})$ denotes the conjugacy of $X$ by the diffeomorphism $\Phi$. It is classical that a normal form is not uniquely defined since one can add any $U_0\in \Ker d_{0|\cH_k}$ to the solution $U_k$ of $d_0(U_k)=-\pi_{\text{Im }d_0}R_{k}$. Therefore, defining for any k$\geq 2$, a supplementary space $\tilde V_k$ to $\Ker d_{0|\cH_k}$ in $\cH_k$, we can restrict the solution $U_k$ to belong to $\tilde V_k$, for all $k$, and refer to the normal form so obtained as {\it the} normal form of $X$.
The corresponding formal transformation $\cdots \circ(\id+U_k)\circ\cdots \circ(\id+U_1)$ is called {\it the associated normalizing} transformation.

Of course, we are interested in the case where these supplementary spaces can be computed effectively. One way to do so is as follows : $\cH_k$ is provided with an Hermitian scalar product (see (\ref{eq:inp})). Let us define 
$\cC_{k}$ (resp. $\tilde V_k$) to be the orthogonal complement to $\text{Im}(d_{0|\cH_k})$ (resp. to be equal to $\text{Im}(d_{0|\cH_k}^*)$), that is $\cH_{k}=\text{Im}(d_{0|\cH_k})\stackrel{\bot}{\oplus} \cC_{k}$. The space $\cC_{k}$ is the kernel of the adjoint $d_{0|\cH_{k}}^*$ of $d_{0|\cH_k}$ with respect to the Hermitian scalar product. 

When $S$ is a semi-simple linear vector field with eigenvalues $\lambda_1,\ldots, \lambda_n$, the space of normal forms is defined in terms of resonance relations $\sum_jq_j\lambda_j=\lambda_i$: the space $\cC_k$ generated by monomial vector fields $x^Q\partial_{x_i}$ with $Q=(q_1,\ldots,q_n)\in \N^n$, $|Q|=q_1+\cdots+q_n=k\geq 2$, $1\leq i\leq n$ which satisfies a resonance relation. We can also choose $\tilde V_k=\cC_k$ and obtain the classical Poincar\'e-Dulac normal form \cite{Arn2}.


When $S$ is nilpotent, the normal form spaces are more diverse and there are different ways of defining normal forms. In dimension $n=2$, F. Takens \cite{kn:Tak2} has shown that the space of normal forms associated to $S=y\partial_x$, $\cC_k$, can be chosen to be the vector space generated by $x^k\partial_x$ and  $x^k\partial_y$, $k\geq 2$. This has been generalized recently by E. Str{\'o}{\.z}yna and H. {\.Z}o{\l}adek \cite{zoladek-nil-multidim} to the case $n\geq 2$. 

There is another classical way to define normal forms of perturbations of nilpotent linear vector fields, by considering an $\slt$-triple associated to $S$ and their representations (see \cite{cushman-sanders, murdock-book}). First of all, it is classical (Jacobson-Morozov theorem)\cite{serre-semi, Lie78-bourbaki} that there exists linear vector fields $N:=S$, $M$ and $H$ of $\Bbb C^n$ such that $[N,M]=H$, $[H,N]=2N$, $[H,M]=-2M$.
A representation of $\slt$ in a finite dimensional vector space $V$, is a triple of endomorphisms ${X,Y,Z}$ such that
$$
[X,Y]=Z,\qquad
[Z,X]=2X,\qquad [Z,Y]=-2Y.
$$
Here, $[X,Y]$ denotes the bracket of endomorphism $XY-YX$. Such a family $\{X,Y,Z\}$ is called an $\slt$-triple. It is classical that $V= {Im X}\oplus Ker Y$. We apply this to $X=[N, .]$, $Y=[M,.]$ acting on the space of homogeneous polynomial vector fields~:
The supplementary space to the image of $d_0$ is then defined to be the kernel of $\text{ad}_M:=[M,.]$ restricted to the space of homogeneous polynomial vector fields. 

There exist also more involved constructions of formal normal forms 
dealing with uniqueness problems such as in \cite{kokubu} for perturbation 
of (quasi-)homogeneous vector fields or in \cite{sanders-spectral,gazor-spectral}, based on spectral sequences.

\subsection{Analytic normal form problem}

As we have seen, for each higher order perturbation of $S$, there exists a formal transformation (i.e a formal change of coordinates) to a formal normal form. We are interested in the situation where not only the perturbation is analytic in a neighborhood of the origin but also the normalizing transformation to its normal form (which is thus analytic too). 

Hence we are led to solve and estimate iteratively the solution $U_k$ 
of the ``cohomological equation'' $d_0(U_k)=F_k$ for a given polynomial $F_k$ of degree $k$ in the range of $d_0$. 

If $S$ is semi-simple, say $S=\sum_{i=1}^n\lambda_i x_i\frac{\partial}{\partial x_i}$, 
this leads to the so-called {\it small divisor problem} \cite{siegel-moser-book, moser-princeton, Arn2}~: the norm of $U_k$ is bounded by the norm of $F_k$ divided by smallest of the non-zero numbers of the form $|q_1\lambda_1+\cdots+ q_n\lambda_n-\la_j|$, $Q=(q_1,\ldots, q_n)\in \Bbb N^n$, $|Q|:=q_1+\cdots +q_n=k$. These numbers may accumulate to the origin when the degree $k$ tends to $+\infty$ causing a blow up of the normalizing transformation. These numbers are called ``small divisors'' (although they may not be small). The faster these numbers accumulate to zero, the smaller the radius of convergence of the formal transformation is. For instance, C. L. Siegel proved \cite{Siegel}, that if there exists $\tau\geq 0$ such that $|q_1\lambda_1+\cdots+ q_n\lambda_n-\la_j|> \frac{C}{|Q|^{\tau}}$ for all $Q\in \Bbb N^n$, $|Q|\geq 2$, then any higher order analytic perturbation of $S$ is analytically linearizable. This condition has been weakened to the so-called $(\omega)$-condition by Bruno \cite{Bruno} (we shall not recall here its definition since we shall not use it). Nevertheless, the control of these small divisors is not sufficient to ensure the analyticity of the normalizing transformation to a (non-linear) normal form. Indeed, there are well known situation for which, although the small divisors are bounded away from the origin, the normalizing transformation is anyway divergent at the origin \cite{Mart1}. Finally, A.D. Bruno \cite{Bruno} found a necessary and sufficient condition on the formal normal form -condition $(A)$- that ensures that if the semi-simple linear part satisfies the diophantine condition $(\omega)$, then any analytic perturbation of $S$ that has a formal normal form satisfying $(A)$ can be analytically normalized. This is a generalization of H. R\"ussmann's Hamiltonian version \cite{russmann-67}. These works have been generalized to several commuting vector fields in several directions \cite{Ito1,Stolo-ihes,stolo-annals, zung-birkhoff, zung-nf, Ito3} in the spirit of J. Vey \cite{vey-iso,vey-ham}.

In \cite{stolo-lombardi}, a broader notion of ``small divisors'' associated to a linear vector field $S$ is defined. They are defined as the square roots of the nonzero eigenvalues of the ``box operator'' $\square_{\cH_k}:=d_{0|\cH_k}d_{0|\cH_{k}}^*$, for all $k>2$. When $S$ is a nilpotent linear vector field in dimension two and three, G. Iooss and E. Lombardi \cite{lombardi-nf} computed the corresponding ``generalized resonances'' as well as the small divisors. Recently, P. Bonckaert and F. Verstringe succeeded in \cite{verstringe-nil}, 
to estimate the generalized small divisors for (almost) any nilpotent linear part. 
It is there shown that they are always bounded away from the origin. 

One of the remaining questions was whether a Takens normal form (in dimension 2) could be obtained systematically by a convergent transformation. It took almost forty years to be answered. In dimension $2$, after an attempt to solve this question by X. Gong \cite{gong-nil}, finally E. Str{\'o}{\.z}yna and H. {\.Z}o{\l}adek \cite{zoladek-nil} showed that any holomorphic nonlinear perturbation of $y\pp{x}$ has a germ of biholomorphism  that conjugate it to a Takens normal form~: $(y+f(x,y))\pp{x}+ g(x,y)\pp{y}$ is analytically conjugate to $(y+k(x))\pp{x}+ l(x)\pp{y}$; here $f,g,k,l$ are analytic germs of order $\geq 2$ at the origin. A more geometric proof was given in \cite{loray-zoladek}. 
It was shown also by E. Str{\'o}{\.z}yna and H. {\.Z}o{\l}adek that the equivalent theorem 
in a higher dimensional setting is false \cite{zoladek-nil-div,zoladek-nil-multidim}.
 
For the similar problem with the ``scalar product normal form'', we emphasize, that in dimension $2$, the ``small divisors'' tend to infinity with the degree of homogeneity. This phenomenon has a regularizing effect on the solution of the conjugacy equation to a normal form. This corresponds, somehow, to the situation of the Poincar\'e domain for non-zero semi-simple linear part, where no requirements are needed on the perturbation for it to have an analytic transformation to a normal form. 

In higher dimension an example in \cite{lombardi-nf} shows that the ``small divisors'' may not tend to infinity with the degree of homogeneity~: they may accumulate finite numbers as well as infinity. The higher dimensional situation for the nilpotent case corresponds somehow to the ``Siegel domain''~: one needs to impose conditions on the formal normal form in order to obtain the convergence of such a transformation.\\
$\;$
 
The aim of this article is to investigate, in dimension $n\geq 3$, the holomorphic conjugacy to a normal form problem for an higher order analytic perturbation of nilpotent linear vector field. We shall define a nilpotent version of Bruno's condition $(A)$. We shall state and prove a nilpotent version of the ``sufficiency part'' of Bruno's theorem~: in dimension $\geq 3$, if an analytic higher perturbation of a linear nilpotent $N$ has its formal normal form that satisfies this condition, then there exists an analytic transformation to its normal form.

\subsection{Main result}
In this section we formulate our main result using the notations introduced in the introduction. We first introduce some extra terminology.
Let $S$ be a linear vector field. We denote by $\widehat\cO_n^S$ the ring of formal first integrals of $S$, that is the set of formal power series $\hat f$ for which the Lie derivative $\cL_S(\hat f)=0$ vanishes.
A nilpotent matrix is said to be {\it regular} if its Jordan normal form does not contain zero blocks. In other words, a nilpotent endomorphism is regular if there is no invariant subspace upon which the operator acts as a zero operator. We shall denote by $R_{\geq 2}$ a germ of holomorphic vector field (or function, map depending on the context) vanishing at order $\geq 2$ at the origin.
\begin{theorem}\label{main-thm}
Let $N$ be a regular nilpotent linear vector field of $\Bbb C^n$ and assume that $\{N,N^*,[N,N^*]\}$ is a $\slt$-triple.
Let $V=N+R_{\geq 2}$ be a germ of holomorphic vector field at the origin of $\Bbb C^n$. Assume that its formal normal form (as defined by the iteration procedure described in the introduction) has the form 
\begin{equation}\label{good-nf}
V=N+fN^*\quad \text{where }\quad f\in\widehat \cO_n^N\cap\widehat \cO_n^{N^*}.
\end{equation}
Then, its associated formal diffeomorphism converges at the origin. 
\end{theorem}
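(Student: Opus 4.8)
\emph{Proof strategy.} The plan is to run a quadratically convergent (Newton‑type) normalization scheme, controlling each step by $\slt$‑representation theory; the hypothesis on $f$ is precisely what keeps the homological operators occurring in the scheme invertible with estimates that do not deteriorate along the iteration. The main obstacle is a single lemma on the invertibility of the linearized operator at an approximate normal form, discussed last.

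\medskip
First I would fix the $\slt$‑module structure that the triple $\{N,N^*,H:=[N,N^*]\}$ puts on each space $\cH_k$ of homogeneous vector fields of degree $k$, through $\operatorname{ad}_N,\operatorname{ad}_{N^*},\operatorname{ad}_H$ (it is here that the $\slt$‑triple hypothesis, hence regularity of $N$, is used). Decomposing $\cH_k$ into irreducibles and writing $d_0=\operatorname{ad}_N$ in a weight basis orthonormal for the Hermitian product of the introduction (for which $d_0^*=\operatorname{ad}_{N^*}$), on an irreducible summand of highest weight $d\ge1$ the nonzero singular values of $d_0$ are the numbers $\sqrt{j(d+1-j)}$, $1\le j\le d$, hence $\ge1$; trivial summands contribute only to $\Ker d_0$, while the orthogonal complement of $\Im d_{0|\cH_k}$ is $\cC_k=\Ker(\operatorname{ad}_{N^*}|_{\cH_k})$ as recalled above. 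Consequently the nonzero eigenvalues of $\square_{\cH_k}=d_{0|\cH_k}d_{0|\cH_k}^*$ are $\ge1$, uniformly in $k$ --- this is the quantitative content of \cite{verstringe-nil} --- so the cohomological equation $d_0U=F$ with $F\in\Im d_{0|\cH_k}$ and $U\in\widetilde V_k=\Im d_{0|\cH_k}^*$ is uniquely solvable with $\|U\|\le\|F\|$. Thus there is no small‑divisor problem, and the only obstruction to convergence will be the normal form itself.

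\medskip
Next comes the key lemma. After part of the normal form $fN^*$ has been produced, the operator that must be inverted at the current approximate normal form $N+gN^*$, with $g$ a high‑order truncation of $f$, is not $d_0$ but
\[
 \mathcal D_g(U)=[N+gN^*,U]=d_0U+g\,[N^*,U]-(\cL_Ug)\,N^*.
\]
The lemma I would establish: if $g$ vanishes to order $\ge2$ and is an $\slt$‑invariant up to high order --- exactly what the hypothesis $f\in\widehat\cO_n^N\cap\widehat\cO_n^{N^*}$ (i.e., $\cL_Nf=\cL_{N^*}f=0$, whence $\cL_Hf=0$) guarantees --- then for every polynomial vector field $F$ in the range of $\mathcal D_g$ the equation $\mathcal D_g(U)=F$ modulo $\cC_\bullet$ has a solution $U\in\widetilde V_\bullet$ satisfying a Cauchy estimate $\|U\|_{r'}\le C\,(r-r')^{-\tau}\|F\|_r$ on nested polydiscs, with $C,\tau$ independent of $g$ and of the degree. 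The mechanism is: since $g$ has order $\ge2$, the map $U\mapsto g[N^*,U]$ strictly raises the degree, so $\mathcal D_g$ is, degree by degree, a triangular perturbation of $d_0$ with the same kernels; $\slt$‑invariance of $g$ makes this perturbation respect both the isotypic decomposition and the splitting $\cH_k=\Im d_0\stackrel{\bot}{\oplus}\cC_k$, so a Neumann‑type expansion built on the uniform bound of the previous step converges with a step‑independent rate; the term $(\cL_Ug)N^*$ lands in the normal‑form direction $N^*$ and is absorbed into the update of $g$.

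\medskip
With this in hand I would run the iteration: setting $d_m\sim2^m$, maintain inductively that $V_m:=(\Phi_{m-1}\circ\cdots\circ\Phi_0)_*V=N+g_mN^*+R_m$ on a polydisc of radius $r_m$, where $g_m$ is the degree‑$<d_m$ truncation of $f$ and $\operatorname{ord}R_m\ge d_m$, $\|R_m\|_{r_m}\le\varepsilon_m$. Applying the key lemma with $g=g_m$ and $F=-\pi R_m$ (truncated below degree $d_{m+1}$, $\pi$ the projection onto $\Im\mathcal D_{g_m}$ along $\cC_\bullet$) produces $U_m\in\widetilde V_\bullet$ of order $\ge d_m$ and a near‑identity change of coordinates $\Phi_m$ generated by $U_m$; then $V_{m+1}=(\Phi_m)_*V_m=N+g_{m+1}N^*+R_{m+1}$ with $g_{m+1}$ the next truncation of $f$ (by uniqueness of the formal normal form together with the hypothesis) and $\operatorname{ord}R_{m+1}\ge d_{m+1}$. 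The usual Cauchy‑estimate bookkeeping of a Newton scheme --- the brackets $[U_m,U_m]$, $[g_mN^*,U_m]$, $\ldots$ all gaining order, combined with the lemma's estimate --- gives $\|R_{m+1}\|_{r_{m+1}}\le C\,\delta_m^{-\sigma}\varepsilon_m^2$ for $r_{m+1}=r_m-\delta_m$. Choosing $\delta_m$ summable with $r_m\downarrow r_\infty>0$, the $\varepsilon_m$ go to $0$ super‑exponentially, so $\Phi:=\lim_m\Phi_0\circ\cdots\circ\Phi_m$ converges on the polydisc of radius $r_\infty$ to a germ of biholomorphism with $\Phi_*V=N+fN^*$; since every generator $U_m$ lies in $\widetilde V_\bullet=\Im d_{0|\cH_\bullet}^*$, a uniqueness argument identifies $\Phi$ with the normalizing transformation of the introduction, which therefore converges.

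\medskip
I expect the main difficulty to be the key lemma: proving that $f\in\widehat\cO_n^N\cap\widehat\cO_n^{N^*}$ --- the nilpotent analogue of Bruno's condition $(A)$ --- delivers \emph{step‑independent} Cauchy bounds for $\mathcal D_g^{-1}$. This demands reconciling three structures at once: the degree grading (which makes $\operatorname{ad}_{gN^*}$ a higher‑order perturbation of $d_0$), the $\slt$‑isotypic decomposition (which supplies the uniform spectral gap and relative to which the invariance of $g$ renders the perturbation harmless), and the projection onto the normal‑form complement (which must be carried consistently through the Neumann expansion). Were $f$ not invariant, $\operatorname{ad}_{gN^*}$ could couple isotypic components in a way that destroys the uniform estimates --- exactly as a generic normal form destroys convergence in the classical semisimple Siegel setting.
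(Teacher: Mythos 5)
Your overall architecture is the paper's: a Newton scheme doubling the order at each step, the $\slt$-module structure on $\cV_{m+1,2m}$ giving a uniform spectral gap ($\square=d_0d_0^*$ has nonzero eigenvalues $n(\la-n+1)\geq 1$), and the linearized operator at the partial normal form $N+g_mN^*$ treated as a degree-raising (hence nilpotent) perturbation of $\square$. But your key lemma is where the paper does its real work, and as stated it has two genuine gaps. First, you dispose of the term $(\cL_U g)N^*$ by asserting that it ``lands in the normal-form direction and is absorbed into the update of $g$.'' This is false in general: a field $hN^*$ lies in $\Ker\operatorname{ad}_{N^*}$ only when $N^*(h)=0$, and nothing forces $N^*\bigl(U(g)\bigr)=0$ -- only $g$ is invariant, not $U(g)$. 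So this term cannot be pushed into $\cC_\bullet$; it must be inverted together with $d_0U+g[N^*,U]$. In the paper it is exactly the operator $P_2$/$Q_2$ of (\ref{def-P})--(\ref{def-Q}), and its estimate (Lemma \ref{lem:boundofoperatorQ2}) is the technical heart of the proof: it is not a ladder operator on the isotypic decomposition, and one has to pass to the invariant subspace of fields $AN+BN^*+CH$ (Lemma \ref{lem:invariantvectorspace}) and solve an explicit triangular system on each irreducible to bound $\boxtildeinv$ on such data by a constant times $\nabla g_m$. Your claim that invariance of $g$ makes the whole perturbation ``respect the isotypic decomposition'' covers only the $g\,d_0^*d_0^*$ part (multiplication by a common invariant commutes with $d_0$ and $d_0^*$), not this one.

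Second, your lemma asserts constants ``independent of $g$,'' and that cannot hold. The perturbation is nilpotent, but its nilpotency order grows like $m$ on $\cV_{m+1,2m}$, so the Neumann-series bound is of size $\norm{Q_1+Q_2}^{O(m)}$; it is independent of $m$ only if the operator norm of the perturbation is $<1$, i.e.\ only under a smallness condition on $|g_m|_r$ and $|\nabla g_m|_r$ -- this is the paper's hypothesis $\left|NF^m-N\right|_r+\left|D(NF^m-N)\right|_r<\eta$ in Proposition \ref{prop-estim-cohom}. Moreover, since $f$ is a priori only a formal series, you cannot secure this smallness once and for all by an initial dilation; it must be propagated through the iteration, which is the nontrivial third item of Proposition \ref{induction}: the increment $\tilde B$ of (\ref{al:cohomeq_solvable2}) is estimated through the bracket $[g_mN^*,d_0^*(V)]$ to be of size $O(\eta/m^3)$ (and $O(\eta/m^2)$ after differentiation), so the accumulated normal form stays in $\mathcal{NF}_{2m}(R)$. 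Your induction tracks only $\norm{R_m}$ and no bound on $g_m$, so the estimates of your scheme do not close as written. Once these two points are repaired -- a genuine inversion with bounds for the $(\cL_Ug)N^*$ component, and a propagated smallness of the partial normal form -- your plan coincides with the paper's proof, including the final bookkeeping of radii ($|U|_r\leq 2md\,|Z|_r$ giving radius losses $m^{-c/m}$ whose product converges).
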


This is the main result of our article and should be understood as a {\bf ``complete integrability'' theorem} along the same lines as (and in fact a continuation of) results by A.D. Bruno \cite{Bruno} (see also \cite{Mart1,zung-nf}), and by the first author \cite{Stolo-ihes, stolo-annals}. Indeed, in these articles, the authors consider a (germ of ) vector field (or commuting families of vector fields) $V=S+R_{\geq 2}$ that are holomorphic nonlinear perturbation of a semi-simple linear vector field $S=\sum_{i=1}^n\la_ix_i\pp{x_i}$. The main (easiest) theorem of Bruno can be stated as follows~: Assume that the normal form of $V$ is of the form $S+fS+g\bar S$ where $\bar S:=\sum_{i=1}^n\bar\la_ix_i\pp{x_i}$ and $f,g\in \widehat{\cO}_n^S$, $f(0)=g(0)=0$ (this is Bruno's ``condition $(A)$''). If $S$ satisfies Bruno's small divisors condition $(\om)$, then its associated formal biholomorphism is convergent at the origin and it conjugates $V$ to a (convergent) normal form.

Theorem \ref{main-thm} can be seen as a {\bf nilpotent version of Bruno's theorem}. Indeed, $\bar S$ plays the role of $N^*$ and we have $\widehat{\cO}_n^S= \widehat{\cO}_n^S\cap \widehat{\cO}_n^{\bar S}$. In our normal form $(\ref{good-nf})$, there is no equivalent term to $fS$ since the natural one would be $fN$ which is not a normal form w.r.t. to $N$, i.e. $fN\notin \Ker ad_{N^*}$.
\begin{remark}
One of the key points of \cite{verstringe-nil} is that, if the linear nilpotent vector field $S$ is regular, then there exists a linear change of coordinates $L$ of $\Bbb C^n$ such that $N:=L_*S$, $N^*$, the vector field defined as the adjoint of the differential operator acting on polynomials of $N$ w.r.t. the scalar product used above, and $H=[N, N^*]$ define an $\slt$-triple. Moreover, $\{\text{ad}_{N},\text{ad}_{N^*}, \text{ad}_{N}\text{ad}_{N^*}-\text{ad}_{N^*}\text{ad}_{N}\}$ is also an $\slt$-triple.
In the sequel, we shall assume that the linear change of coordinates $L$ has been applied and 
hence, that both $\{N,N^*,[N,N^*]\}$ and $\{\text{ad}_{N},\text{ad}_{N^*},[\text{ad}_{N},\text{ad}_{N^*}]\}$ are $\slt$-triples. 
\end{remark}
\begin{remark}\label{rem-sl2-cv}
If $V=N+R_{\geq 2}$ is a germ of holomorphic vector field at the origin of $\Bbb C^n$ that can be embedded into an analytic $\slt$-triple, that is if there are germs $\tilde V=N^*+h.o.t.$, $V'=H+h.o.t.$ of holomorphic vector fields such that $\{V,\tilde V, V'\}$ is (Lie-)isomorphic to $\slt$, then  $V$,$\tilde V$ and $V'$ are simultaneously and holomorphically linearizable. Indeed, at the formal level, this follows from \cite{Hermann-semi-simple} while the analytic result follows from \cite{stern-semi-simple,Kushnirenko}. It should be emphasized that this result does not hold in the smooth category \cite{stern-semi-simple,Ghys}. 
\end{remark}
\begin{remark}\label{rem-sl2-form}
If $V=N+R_{\geq 2}$, a germ of holomorphic vector field at the origin of $\Bbb C^n$, can be embedded into a formal $\slt$-triple, that is if there are formal vector fields $\tilde V=N^*+h.o.t.$, $V'=H+h.o.t.$ such that $\{V,\tilde V, V'\}$ is (Lie-)isomorphic to $\slt$, then  $V$,$\tilde V$ and $V'$ are simultaneously formally linearizable. According to \cite{verstringe-nil}[p.2223], the ``small divisors'' are bounded away from the origin. Then, according to \cite{stolo-lombardi}[theorem 5.8], $V$ is analytically linearizable.
\end{remark}

There are other results concerning the problem of classification of perturbations not of nilpotent vector fields but rather of quasihomogenous vector fields with a nilpotent linear part at the origin. They are all in dimension $2$ (see for instance \cite{Moussu-nil, schafke-nil}) and are not immediately concerned with holomorphic conjugacy (in a neighborhood) at the origin to a normal form.

\subsection{Geometric interpretation}

According to Weitzenb\"ock's theorem \cite{weitzenbock}, the ring of formal first integrals $\widehat\cO_n^N$ of any nilpotent linear vector field $N$ is finitely generated over $\Bbb C$ (see \cite{nowicki}[theorem 6.2.1] for this formulation). 

The ring of common formal first integrals $\widehat\cO_n^N\cap \widehat\cO_n^{N^*}$ is hence finitely generated over $\Bbb C$. Let $P_1,\ldots,P_r$ be a set of generating polynomials. If they are algebraically dependent, then there exists polynomials $Q_1,\ldots, Q_l$ on $\Bbb C^r$ such that $Q_i(P_1,\ldots, P_r)=0$ for all $i$. Let us consider 
$$
{\mathcal C}=\{z\in \Bbb C^r\;|\;Q_i(z)=0,\;i=1,\ldots, l\}
$$
the zero locus of these polynomials. If the $P_j$'s are algebraically independent, then ${\mathcal C}=\Bbb C^r$.
Consider the map
\begin{eqnarray*}
\pi~: \Bbb C^n &\rightarrow & ({\mathcal C}, 0),\\
x &\mapsto &  (P_1(x),\ldots,P_r(x)).
\end{eqnarray*}

We apply our main result and assume that the vector field $V$ has been conjugated into a normal form by a germ of holomorphic transformation at the origin. In these holomorphic coordinates, we have
$$
V= N+fN^*
$$
where $f\in \cO_n^N\cap \cO_n^{N^*}$ is a germ of holomorphic function. Let $b\in ({\mathcal C},0)$ be a point in the image of $\pi$. Then, 
\begin{enumerate}
\item {\bf $V$ is tangent to the level set of $\Sigma_b:=\pi^{-1}(b)$}, intersected with a neighborhood of the origin. Indeed, both $N$ and $N^*$ are tangent to this level set and the function $f$ is constant on this level set. 
\item the restriction of the vector field $V$, $V_{|\Sigma_b}$, {\bf is the restriction to $\Sigma_b$ of a linear vector field}. 
\item the restriction $V_{|\Sigma_b}$, {\bf belongs to the restriction an $\slt$-action}\footnote{We thank Nguyen Tien Zung for this remark}, namely $\{N_{|\Sigma_b},N^*_{|\Sigma_b}, H_{|\Sigma_b}\}$ (compare with remarks \ref{rem-sl2-cv} and \ref{rem-sl2-form}).
\item Moreover {\bf this holds for every fiber }(in a neighborhood of the origin in $\Bbb C^n$) within a neighborhood of the origin in ${\mathcal C}\cap \Im\pi$. 
\end{enumerate}
As a by-product, we also obtain that the zero fiber of $\pi$ is invariant, the restriction to which $V$ equals to the restriction of the linear nilpotent field $N$. As illustrated in the next example, this last result could have obtained under a much weaker assumption using \cite{stolo-lombardi}[theorem 5.6] combined with the ``no-small-divisors'' statement of \cite{verstringe-nil}.

\begin{exam}
Consider the three dimensional linear vector field~:
$$
N:= y\frac{\partial}{\partial x}+z\frac{\partial}{\partial y}.
$$
We have 
$$
N^*:= x\frac{\partial}{\partial y}+y\frac{\partial}{\partial z}.
$$
Let us set
$$
h=xz-\frac{y^2}{2}.
$$
It follows that, $N(h)=0=N^*(h)$ and $\widehat \cO_3^N\cap\widehat \cO_3^{N^*}=\Bbb C[[h]]$.
A formal normal form of a nonlinear perturbation of $N$ is a formal vector field of the form
\begin{eqnarray}\label{nfO3}
\dot x & = & y + xP_1(x,h)\\
\dot y & = & z + yP_1(x,h)+xP_2(x,h)\nonumber\\
\dot z & = & zP_1(x,h)+yP_2(x,h)+P_3(x,h)\nonumber
\end{eqnarray}
where the $P_i$'s are formal power series \cite{lombardi-nf}.

We apply our main result~: Let $V=N+R_{\geq 2}$ be a germ of holomorphic vector field that is a nonlinear perturbation of $N$. Assume that the normal form of $V$ is of the form~: 
$$
\hat\Phi_*V=N+\hat f\left(xz-\frac{y^2}{2}\right)N^*
$$
where $\hat f$ is a formal power series of one variable (that is $P_1=P_3=0$, $P_2(x,h)=\hat f(h)$). Then, $\hat\Phi$ defines a germ of holomorphic change of variables $\Phi$ at the origin of $\Bbb C^3$ and $\hat f$ defines a germ of holomorphic function $f$ at $0\in \Bbb C$ such that 
$$
\Phi_*V=N+ f\left(xz-\frac{y^2}{2}\right)N^*.
$$
In these holomorphic coordinates, the vector field is tangent to each fiber 
$$
\Sigma_b=\left\{(x,y,z)\in (\Bbb C^3,0)\;|\;xz-\frac{y^2}{2}=b\right\}
$$ 
in a neighborhood of the origin, $b$ being sufficiently small. In particular, in these new holomorphic coordinates, the vector field is tangent to $\Sigma_0$ and its restriction to it is equal to (the restriction of) $N$.
We emphasize that the last point can be achieved under a much weaker assumption~: Assume that the formal normal form is such that $P_i(x,0)=0$, $i=1,2, 3$. Then, according to \cite{stolo-lombardi}[theorem 5.6] applied to ${\mathcal I}=(h)$, there are holomorphic coordinates for which $\Sigma_0$ in an invariant set of the initial vector field $X$. Furthermore, its restriction to $\Sigma_0$ is equal to $N$. 
\end{exam}

\subsection{Sketch of the proof}

The proof of the main theorem is done using a Newton scheme. We assume that $V=NF_m+R_{\geq m+1}$ is normalized up to order $m=2^k$. By assumption, the partial normal form $NF_m$ is of the form $N+f_mN^*$ where $f_m\in \widehat \cO_n^N\cap\widehat \cO_n^{N^*}$ is a polynomial of degree $\leq m-1$. There is a unique polynomial vector field $U$ of the form $U=d_0^*(V)$, of order $\geq m+1$ and of degree $\leq 2m$ such that the diffeomorphism $(\id+U)^{-1}$ normalizes $V$ up to order $2m$. The vector field $U$ solves an equation of the form~:
$$
J^{2m}([NF_m,U])=B
$$
for some known $B$, in the range of the box operator $\square$. Here, $J^{2m}$ denotes the truncation at degree $2m$ of the Taylor expansion at the origin.
We then need to estimate $U$ in terms of $B$. For that purpose, we shall first define a suitable analytic norm (depending on some ``radius'' $r$, see section \ref{section-norms}) based on the scalar product we mentioned above (see section \ref{sp-section}). Following (\ref{al:mainoperatorequation}), we shall then re-write the previous equation as 
$$
V=\left( \id +Q_1+Q_2 \right)^{-1}\boxtildeinv B,
$$
where $\boxtilde$ denotes the restriction of the operator $\square:=d_0d_0^*$ to its image and 
$Q_1,Q_2$ are operators defined by (\ref{def-Q}) and (\ref{def-P}), depending on $f_m$. Using {\bf all properties of $\slt$-triples} (see section \ref{sec:machinery}), we shall be able to estimate the normal of $Q_1,Q_2$ and $\boxtildeinv$. As a consequence, we shall obtain an estimate of $V$ in terms of $B$, and then an estimate of $U$ in terms of $B$ (see proposition \ref{prop-estim-cohom}). Then the conjugacy of $V$ by $(\id+U)^{-1}$ is normalized up to order $2m$~: $(\id+U)^{-1}_*V=NF_{2m}+R_{\geq 2m+1}$ where $NF_{2m}:=N+f_{2m}N^*$ is a normal form. Choosing $R<r$ in an appropriate way, we shall show that the new normal form $NF_{2m}$ as well as the new remainder $R_{\geq 2m+1}$ satisfy the same estimate w.r.t. the $R$-norm that $NF_m$ and $R_{\geq m+1}$ satisfy w.r.t. the $r$-norm (proposition \ref{induction}). This allow to do an induction process. The key point (lemma \ref{small-div}) is that the sequence of successive radii $\{R_k\}$ converges to a positive number. This is due to the sharp estimate of the solution $U$ we obtain (for each step). A classical argument then shows that the sequence of (partial) normalizing diffeomorphism converges to a genuine holomorphic diffeomorphism that conjugates $V$ to a normal form.
\subsection{Convergence to the normal form requires a condition}

We don't know whether condition $(\ref{good-nf})$ is necessary for the convergence of the normalizing transformation to hold (as it is the case of Bruno's condition (A) in the semi-simple case). However, Str{\'o}{\.z}yna and {\.Z}o{\l}adek  have shown in \cite{zoladek-nil-div}, that the normalizing transformation of the vector field 
$$
(*)\left\{\begin{array}{ccl} \dot x & = & y-x^2+x^3\\ \dot y &=& z+2x^4\\ \dot z &=&0\end{array}\right.
$$
to its (generalized) Takens normal form is divergent at the origin. Although, we shall not investigate the dictionary between different ``styles'' of normal form, let us show that the ``scalar product'' normal form of $(*)$ doesn't satisfy $(\ref{good-nf})$. Indeed, we have the following
\begin{lemma}
If $f(x,y,z)$ is a formal first integral of $(*)$, then $f$ is a function of $z$, only.
\end{lemma}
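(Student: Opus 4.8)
The plan is to work directly with the defining system $\dot x = y - x^2 + x^3$, $\dot y = z + 2x^4$, $\dot z = 0$, and translate the first-integral condition into the vanishing of the Lie derivative $\cL_V(f) = 0$, where $V = (y-x^2+x^3)\pp{x} + (z+2x^4)\pp{y}$. Write $f$ as a formal power series; since $\dot z = 0$, it is natural to regard $z$ as a parameter and expand $f = \sum_{k\geq 0} f_k(x,y)\, z^k$ is not quite the right move — instead I would expand $f$ in increasing total order and exploit that the lowest-order obstruction already forces strong constraints. Concretely, the equation reads
\[
(y-x^2+x^3)\,\pp{x} f + (z+2x^4)\,\pp{y} f = 0.
\]

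First I would look at the lowest-degree homogeneous part. If $f$ has a nonzero homogeneous component of minimal degree $d$, call it $f^{(d)}$, then the degree-$(d)$ part of the equation (keeping only the linear part $y\pp{x} + z\pp{y}$ of $V$, since $-x^2+x^3$ and $2x^4$ raise degree) gives $y\,\pp{x} f^{(d)} + z\,\pp{y} f^{(d)} = 0$. This is the cohomological equation for the nilpotent linear field $N_3 = y\pp{x} + z\pp{y}$ in three variables, whose kernel on polynomials is generated by $z$ and by $h = xz - y^2/2$ (the first integrals of $N_3$). So $f^{(d)}$ must be a polynomial in $z$ and $h$. The strategy is then to show, order by order, that the perturbation terms $-x^2\pp{x}$, $x^3\pp{x}$, $2x^4\pp{y}$ obstruct any dependence on $h$, pushing all $h$-dependence out and leaving only powers of $z$; and to show there is no contribution at all from genuinely new (non-$(z,h)$) directions.

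The cleanest way to organize the induction is probably the following. Suppose $f = c(z) + (\text{terms involving } x \text{ or } y \text{ nontrivially})$ and assume for contradiction that $f$ is not a function of $z$ alone. Let $g = f - (\text{its } z\text{-only part})$, so $g \neq 0$ and $g$ has some lowest-order term $g^{(d)}$, $d \geq 1$, which (by the linear analysis above) is a nonzero polynomial in $z$ and $h$ with no pure-$z$ term, hence genuinely divisible by $h$: $g^{(d)} = h^s\, p(z)$ for some $s \geq 1$ after factoring, or more generally a $\C[z]$-combination of positive powers of $h$. Now apply $V$ to $f$; the degree-$(d+1)$ part of $\cL_V f = 0$ is $N_3(g^{(d+1)}) + (-x^2\pp{x})g^{(d)} = 0$ where $g^{(d+1)}$ is the next component, wait — I must be careful about which perturbation term contributes at which degree: $-x^2\pp{x}$ maps degree $d$ to degree $d+1$, $x^3\pp{x}$ and $2x^4\pp{y}$ to degree $d+2$ and $d+3$. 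So at degree $d+1$: $N_3(f^{(d+1)}) = x^2\,\pp{x} g^{(d)}$. For this to be solvable, the right-hand side must lie in the image of $N_3$ on degree-$(d+1)$ polynomials, i.e. must be orthogonal to / avoid the kernel directions. The main obstacle, and the crux of the argument, is to verify that $x^2\,\pp{x}(h^s p(z))$ has a nonzero component along the first integrals of $N_3$ in degree $d+1$ — i.e. that it is genuinely not in $\im N_3$ — so that no choice of $f^{(d+1)}$ can cancel it, forcing $g^{(d)} = 0$, a contradiction. This reduces to a concrete computation with $\pp{x}h = z$ and the explicit structure of $\ker N_3$; I would compute $x^2 \pp{x}(h^s) = s x^2 z h^{s-1}$ and check its projection onto $\C[z,h]$-invariants, which should be nonzero because $x^2 z$ is not itself a multiple of $z$ or $h$ modulo $\im N_3$.

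The main obstacle, then, is the bookkeeping of which perturbation monomial feeds into which graded piece and the verification that the obstruction does not accidentally vanish — this requires knowing the image of $N_3$ on each $\cH_k^{\text{fun}}$ (functions, not vector fields) precisely enough, e.g. via the $\slt$-representation decomposition of $\C[x,y,z]$ under $\{\mathrm{ad}\text{-type operators}\}$ or by a direct recursion on monomials. An alternative, possibly shorter route avoiding graded bookkeeping: directly substitute $f = f(x,y,z)$ and use $\dot z = 0$ to treat $z$ as constant, reducing to a planar problem $(y-x^2+x^3)\pp{x} f + (z + 2x^4)\pp{y} f = 0$ with parameter $z$; a first integral of this planar vector field, if it depended on $x$ or $y$, would give a nonconstant analytic first integral of a planar vector field whose linear part $y\pp{x} + z\pp{y}$ (for $z \neq 0$) is $\begin{pmatrix} 0 & 1 \\ 0 & 0\end{pmatrix}$ acting with the forcing term $z \neq 0$ in the second equation — but $\dot y = z + 2x^4 \neq 0$ near the origin when $z \neq 0$, so $y$ is strictly monotone along orbits and the only first integrals are functions of $z$; letting $z \to 0$ and invoking continuity/formal-series rigidity then forces $f$ to depend on $z$ only. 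I would present the graded-obstruction argument as the main proof and mention the dynamical argument as a remark, since the graded version stays within the formal/algebraic framework already set up in the paper and reuses the $\ker N_3 = \C[z, xz - y^2/2]$ fact that is computed explicitly in the preceding example.
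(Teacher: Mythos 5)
Your overall frame (Lie derivative, graded decomposition, $\ker N=\C[z,h]$ for $N=y\pp{x}+z\pp{y}$, and an obstruction argument against $h$-dependence) is the same as the paper's, but the crux of your main argument fails. You claim that if the lowest-order part $g^{(d)}$ of $g$ is a nonzero $\C[z]$-combination of positive powers of $h$, then $x^2\,\partial_x g^{(d)}=x^2z\,q(z,h)$ is \emph{not} in the image of $N$ on degree-$(d+1)$ polynomials, so that the equation $N(g^{(d+1)})=x^2\partial_x g^{(d)}$ is unsolvable and one gets a contradiction at the very next degree. This is false in general: membership in $\Im N$ is tested against $\ker N^*=\C[x,h]$ (with respect to the Fischer-type product one has $\cP_\delta=\Im N\oplus^{\bot}\ker N^*$, \emph{not} $\Im N\oplus\ker N$ --- note $z=N(y)$ lies in both $\ker N$ and $\Im N$, so ``projection onto the $\C[z,h]$-invariants'' is not the right test), and against $\C[x,h]$ the monomials $x^2z^a$ with $a\geq 3$ pair to zero, hence lie in $\Im N$. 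Concretely, take $g^{(4)}=z^2h$ (a legitimate candidate: it is in $\ker N$, has no pure-$z$ monomial, and is divisible by $h$); then $x^2\partial_x g^{(4)}=x^2z^3=N\bigl(x^2yz^2-\tfrac{2}{3}xy^3z+\tfrac{2}{15}y^5\bigr)$, so the degree-$5$ equation is perfectly solvable and no contradiction arises at order $d+1$. In general only the terms of $g^{(d)}$ with the lowest power of $z$ produce an obstruction at the next order; all the others are absorbed, so your one-step induction cannot conclude $g^{(d)}=0$. This is exactly why the paper's proof is more elaborate: it repeatedly sets $z=0$ and factors out $z$ (writing $g_{k+2}=z\tilde g_{k+2}$, then $\tilde g_{k+2}=zG$, etc.), at each stage using only that the $z$-free piece $\alpha x^2h^m$ of the right-hand side lies in $\ker N^*$, and the genuine obstruction appears only after all powers of $z$ have been stripped, in the form $N(G_2)=\alpha x^2$ with $x^2\in\ker N^*$; only then does back-substitution give $\partial P_{k+1}/\partial h=0$ and hence $P_{k+1}=\alpha_{k+1}z^{k+1}$.

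Your alternative ``dynamical'' remark is also unsound as stated: for fixed $z\neq 0$ the planar field $(y-x^2+x^3)\pp{x}+(z+2x^4)\pp{y}$ is nonsingular at the origin, and a nonsingular planar vector field always has nonconstant local (even analytic) first integrals by the flow-box theorem; strict monotonicity of $y$ along orbits does not exclude them (e.g.\ $\pp{y}$ has first integral $x$). Moreover $f$ is only a formal series, so specializing $z=z_0$ and invoking orbits requires extra justification. So neither route, as written, closes the argument; to repair the main proof you must push the obstruction analysis beyond degree $d+1$, which essentially forces you into the paper's iterative division-by-$z$ scheme.
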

\begin{proof}
Let $\cL$ be the vector field associated to $(*)$ and let $N:=y\frac{\partial}{\partial x}+z\frac{\partial}{\partial y}$ be its linear part at $0$. Let us assume, by induction, that $f=\sum_{i=1}^kf_iz^i+\sum_{j\geq k+1}g_{j}$  where $g_j$ is homogeneous of degree $j$ in {\bf all variables}, and $f_i$'s are complex coefficients. Hence, 
we have $0=\cL(f)=\cL(g_{k+1}+g_{k+2}+\text{h.o.t})$ so $N(g_{k+1})=0$. Therefore, $g_{k+1}$ is a polynomial $P_{k+1}$ of $z$ and $h=xz-\frac{y^2}{2}$. Let us show that $P_{k+1}=\alpha_{k+1}z^{k+1}$ and that $g_{k+2}=\alpha_{k+2}z^{k+2}+ \beta_{k+2}z^{k}h$. Indeed, the Taylor polynomial of degrre $k+2$ of $\cL(f)$ is
$$
N(g_{k+2})-x^2\frac{\partial g_{k+1}}{\partial x}=0= N(g_{k+2})-x^2z\frac{\partial P_{k+1}}{\partial h}.
$$
Setting $z=0$, we obtain $y\frac{\partial g_{k+2}}{\partial x}(x,y,0)=0$. Therefore, $g_{k+2}=z\tilde g_{k+2}$ and $N(\tilde g_{k+2})-x^2\frac{\partial P_{k+1}}{\partial h}=0$.
 Let us write $x^2\frac{\partial P_{k+1}}{\partial h}=\alpha x^2h^m+x^2zp(z,h)$ for some polynomial $p$ and a constant $\alpha$. Since $x^2h^m\in Ker N^*$, $x^2\frac{\partial P_{k+1}}{\partial h}$  can belong to the range of $N$ only when $\alpha =0$. Thus, $x^2\frac{\partial P_{k+1}}{\partial h}=x^2zp(z,h)$, $\tilde g_{k+2}=zG$ and  $N(G)=x^2p(z,h)$. We continue this process. We finally obtain that $g_{k+2}=z^{k}G_2$ and $N(G_2)=\alpha x^2$ for some quadratic polynomial $G_2$. 
Since $N^*(x^2)=0$, we have $\alpha=0$ and thus $G_2$ is a first integral~: $G_2=\alpha z^2+\beta h$. This means that $g_{k+2}= \alpha_{k+2}z^{k+2}+ \beta_{k+1}z^{k}h$. As a consequence, $N(g_{k+2})=0$, so that $\frac{\partial P_{k+1}}{\partial h}=0$ and $P_{k+1}=
\alpha_{k+1}z^{k+1}$.
\end{proof}
As a consequence, system $(*)$ can not be formally conjugate to a system satisfying $(\ref{good-nf})$.
Indeed, if it was, then its formal first integral would be a push-forward of a first integral of the second system. 
Since $(\ref{good-nf})$ has $h=xz-\frac{y^2}{2}$ has first integral, then $(*)$ would have a formal first integral of the form $h\circ\Phi$, for some formal diffeomorphism $\Phi$, tangent to identity. But such a function cannot be a function of the variable $z$ only.\\
$\;$\\
\noindent
{\bf Acknowledgement }
{\it Part of the is work was done while the second author spent 6 months on a postdoctoral position provided by University of Nice Sophia Antipolis during academic year 2011-2012. The first author thank M. Gazor, J. Murdock and J. Sanders for interesting email exchanges on  uniqueness of formal normal/hypernormal forms}. 

\section{Background}

For any non-negative integer $k$, $\mathcal{P}_k$ (resp. $\mathcal{V}_k$) denotes the space of homogeneous polynomials (resp. polynomial vector fields) of degree $k$. We shall also write $\mathcal{P}_{k,m}:=\bigoplus_{j=k}^m\mathcal{P}_j$  (resp. $\mathcal{P}_{\geq k}:=\bigoplus_{j=k}^{\infty}\mathcal{P}_j$) as well as  $\mathcal{V}_{k,m}:=\bigoplus_{j=k}^m\mathcal{V}_j$ (resp. $\mathcal{V}_{\geq k}:=\bigoplus_{j=k}^{\infty}\mathcal{V}_j$)

\subsection{Scalar product}\label{sp-section}

We introduce first some notation. 
Let $\alpha\in \N^n$, we shall write $\alpha!:=\alpha_1!\ldots \alpha_n!$. We define $\mathcal{P}_{\delta}$ to be the space of homogeneous polynomials of degree $\delta$ in $n$ variables $x_1,\ldots,x_n$. We first consider the scalar product on $\cP_{\de}$ introduced by E. Fischer \cite{fischer} and G. Belitskii \cite{Belitskii-formal}~:
\begin{lemma}\label{belitskii}
The scalar product defined on $\cP_{\de}$~:
$$
 \Big<\sum_{|\alpha|=\delta}a_{\alpha}x^{\alpha},\sum_{|\beta|=\delta}c_{\beta}x^{\beta}\Big>_{B,\delta
} : =\sum_{|\alpha|=\delta}a_{\alpha}\bar{c}_{\alpha}\alpha!
$$
has the following property~: the operator $\frac{\partial}{\partial x_j}$ is adjoint to the multiplication by $x_j$ operator, that is  $<\frac{\partial f}{\partial x_j}, g>_{B,\de}= <f, x_jg>_{B,\de +1}$ and  $<g,\frac{\partial f}{\partial x_j}>_{B,\de}= <x_jg, f>_{B,\de+1}$ for all $(f,g)\in\cP_{\de+1}\times \cP_{\de}$.
\end{lemma}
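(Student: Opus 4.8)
The plan is to prove the claimed adjointness property by a direct computation on monomials, since the scalar product $\langle\cdot,\cdot\rangle_{B,\delta}$ is defined so that distinct monomials are orthogonal and $\langle x^\alpha, x^\alpha\rangle_{B,\delta} = \alpha!$. By sesquilinearity it suffices to test the identity $\langle \partial f/\partial x_j, g\rangle_{B,\delta} = \langle f, x_j g\rangle_{B,\delta+1}$ on pairs $f = x^\beta$ with $|\beta| = \delta+1$ and $g = x^\alpha$ with $|\alpha| = \delta$; the second identity then follows by conjugating the first.

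First I would observe that $\partial x^\beta/\partial x_j = \beta_j\, x^{\beta - e_j}$ (interpreted as $0$ if $\beta_j = 0$), where $e_j$ is the $j$-th standard basis vector, and that $x_j x^\alpha = x^{\alpha + e_j}$. Plugging in, the left-hand side is $\beta_j\,\langle x^{\beta - e_j}, x^\alpha\rangle_{B,\delta}$, which by orthogonality of monomials is $0$ unless $\beta - e_j = \alpha$, i.e.\ $\beta = \alpha + e_j$, in which case it equals $\beta_j\,\alpha! = (\alpha_j + 1)\,\alpha!$. The right-hand side is $\langle x^\beta, x^{\alpha + e_j}\rangle_{B,\delta+1}$, which is likewise $0$ unless $\beta = \alpha + e_j$, in which case it equals $(\alpha + e_j)! = (\alpha_j+1)!\prod_{i \neq j}\alpha_i! = (\alpha_j+1)\,\alpha!$. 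The two sides agree in every case, and the case distinctions match up (both vanish precisely when $\beta \neq \alpha + e_j$), so the identity holds on the spanning set and hence everywhere by linearity in $f$ and conjugate-linearity in $g$.

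For the second identity $\langle g, \partial f/\partial x_j\rangle_{B,\delta} = \langle x_j g, f\rangle_{B,\delta+1}$, I would simply take complex conjugates of both sides of the first identity, using that $\langle u, v\rangle = \overline{\langle v, u\rangle}$ for a Hermitian scalar product; applying this with the roles of $f$ and $g$ suitably placed gives the claim directly, with no further computation.

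There is essentially no hard part here: the only thing to be careful about is the bookkeeping of the factorial weights and the convention that $\partial x^\beta/\partial x_j$ vanishes when $\beta_j = 0$ — one must check that in that degenerate case the left-hand side is genuinely $0$ and that the right-hand side also vanishes because $x_j x^\alpha = x^{\alpha+e_j}$ has $(\alpha+e_j)_j \geq 1 > 0 = \beta_j$, so $\beta \neq \alpha + e_j$. Once the monomial identity $\beta_j \cdot \alpha! = (\alpha+e_j)!$ whenever $\beta = \alpha+e_j$ is noted, the proof is complete.
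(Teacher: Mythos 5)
Your proof is correct: the paper states this lemma without proof (it is a classical fact going back to Fischer and Belitskii), and your verification is exactly the standard argument. Reducing by sesquilinearity to monomials $f=x^{\beta}$, $g=x^{\alpha}$, checking that both sides vanish unless $\beta=\alpha+e_j$ and then equal $(\alpha_j+1)\,\alpha!=(\alpha+e_j)!$, and deducing the second identity from Hermitian symmetry of the product, is precisely what is needed, including the careful handling of the degenerate case $\beta_j=0$.
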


In \cite{stolo-lombardi}, a variant of this scalar product was introduced, namely~: 
\begin{align}
\label{eq:inp}
 \Big<\sum_{|\alpha|=\delta}a_{\alpha}x^{\alpha},\sum_{|\beta|=\delta}c_{\beta}x^{\beta}\Big>_\delta
 =\sum_{|\alpha|=\delta}a_{\alpha}\bar{c}_{\alpha}\frac{\alpha!}{|\alpha|!}.
\end{align}
Let $\|.\|_{\de}$ be its associated norm. Since homogeneous polynomial of different degree are orthogonal to each other, we are able to define the norm of formal power series $f=\sum_{k\geq 0} f_k$ where $f_k$ is a homogeneous polynomial of degree $k$ as
$$
\|f\|^2:=\sum_{k\geq 0}\|f_k\|^2_k.
$$
It has the some important properties that are summarized in the following proposition~:
\begin{proposition}\cite{stolo-lombardi}[proposition 3.6-3.7]
Let $f,g$ be formal power series on $\Bbb C^n$. Let us write $f=\sum_{k\geq 0} f_k$ where $f_k$ is a homogeneous polynomial of degree $k$. Then,
\begin{itemize}
\item $\|fg\|\leq \|f\|\|g\|$
\item $f$ defines a germ of holomorphic function at the origin if and only if there exist positive constants $M,C$ such that $\|f_k\|_k\leq MC^k$.
\end{itemize}
\end{proposition}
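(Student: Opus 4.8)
\emph{Proof proposal.} The statement depends only on the combinatorial weights $\frac{\alpha!}{|\alpha|!}$, so the plan is to reduce everything to homogeneous components and one arithmetic identity. For the submultiplicativity I would first write $fg$ in homogeneous pieces, $(fg)_m=\sum_{k+l=m}f_kg_l$, and use the triangle inequality for $\|\cdot\|_m$ to reduce to the homogeneous estimate
$$\|PQ\|_{k+l}\le\|P\|_k\,\|Q\|_l,\qquad P\in\cP_k,\ Q\in\cP_l,$$
after which summing over $m$ and factoring the resulting double sum gives $\|fg\|\le\|f\|\|g\|$.

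To prove the homogeneous estimate I would argue directly with Cauchy--Schwarz. Writing $P=\sum_{|\alpha|=k}a_\alpha x^\alpha$ and $Q=\sum_{|\beta|=l}b_\beta x^\beta$, the $x^\gamma$-coefficient of $PQ$ is $\sum_{\alpha+\beta=\gamma}a_\alpha b_\beta$; I would factor each summand as $\bigl(a_\alpha\sqrt{\alpha!/k!}\bigr)\bigl(b_\beta\sqrt{\beta!/l!}\bigr)\sqrt{\tbinom{k}{\alpha}\tbinom{l}{\beta}}$ and apply Cauchy--Schwarz in the sum over $\{\alpha+\beta=\gamma\}$. The crucial point is that the ``leftover'' factor sums to $\sum_{\alpha+\beta=\gamma}\binom{k}{\alpha}\binom{l}{\beta}=\binom{k+l}{\gamma}=\frac{(k+l)!}{\gamma!}$ by the Vandermonde identity (compare $x^\gamma$-coefficients in $(x_1+\dots+x_n)^k(x_1+\dots+x_n)^l=(x_1+\dots+x_n)^{k+l}$), which is exactly the reciprocal of the weight $\frac{\gamma!}{(k+l)!}$ appearing in $\|PQ\|_{k+l}^2$. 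After this cancellation the double sum collapses to $\|P\|_k^2\|Q\|_l^2$. (The conceptual version: identify $(\cP_k,\|\cdot\|_k)$ isometrically with $\mathrm{Sym}^k(\C^n)$ inside $(\C^n)^{\otimes k}$, observe that polynomial multiplication becomes the symmetrized tensor product, and use that the symmetrization projector has operator norm $1$.)

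For the second bullet I would sandwich $\|f_k\|_k$ between multiples of the $\ell^2$-norm of the Taylor coefficients of $f_k$, using the elementary bound $n^{-k}\le \alpha!/k!\le 1$ for $|\alpha|=k$ (upper bound trivial; lower from $\binom{k}{\alpha}\le\sum_{|\beta|=k}\binom{k}{\beta}=n^k$). If $f$ is a holomorphic germ, Cauchy's estimates give $|a_\alpha|\le B\rho^{-|\alpha|}$ for some $B,\rho>0$, hence $\|f_k\|_k^2\le B^2\rho^{-2k}\sum_{|\alpha|=k}\tfrac{\alpha!}{k!}\le B^2\rho^{-2k}\binom{n+k-1}{k}$, and the merely polynomial growth of $\binom{n+k-1}{k}$ in $k$ turns this into $\|f_k\|_k\le MC^k$. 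Conversely, from $\|f_k\|_k\le MC^k$ I would get $\sum_{|\alpha|=k}|a_\alpha|^2\le n^k\|f_k\|_k^2$, hence $|a_\alpha|\le M(\sqrt n\,C)^{|\alpha|}$ for all $\alpha$, and then $\sum_\alpha|a_\alpha|\rho^{|\alpha|}\le M\sum_k\binom{n+k-1}{k}(\sqrt n\,C\rho)^k<\infty$ for $\rho$ small shows the Taylor series of $f$ converges on a polydisc.

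The only genuinely delicate step is the homogeneous submultiplicativity, and inside it the recognition that $\sum_{\alpha+\beta=\gamma}\binom{k}{\alpha}\binom{l}{\beta}=\binom{k+l}{\gamma}$ is precisely the factor that cancels the Bombieri-type weight; once the Cauchy--Schwarz split is lined up with this identity the inequality falls out with constant exactly $1$, and everything else (the reduction to homogeneous pieces, and the two-sided comparison in the second bullet) is routine bookkeeping, with dimension $n$ entering only through the harmless polynomial factor $\binom{n+k-1}{k}$.
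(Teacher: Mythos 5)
The paper itself gives no proof of this proposition (it is quoted from \cite{stolo-lombardi}), so your argument has to stand on its own. Its core is sound: the homogeneous inequality $\|PQ\|_{k+l}\le\|P\|_k\|Q\|_l$ via the Cauchy--Schwarz split against the factor $\sqrt{\binom{k}{\alpha}\binom{l}{\beta}}$ and the Vandermonde identity $\sum_{\alpha+\beta=\gamma}\binom{k}{\alpha}\binom{l}{\beta}=\frac{(k+l)!}{\gamma!}$ is exactly the Bombieri-type product inequality, and it is precisely the property the paper actually uses later (in the proof of Lemma~\ref{prod-norm-r} only $\|f_ig_j\|_{i+j}\le\|f_i\|_i\|g_j\|_j$ plus the triangle inequality is invoked). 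Your treatment of the second bullet (two-sided comparison of $\|f_k\|_k$ with the $\ell^2$ norm of the coefficients via $n^{-k}\le\alpha!/k!\le1$, Cauchy estimates in one direction, crude polydisc convergence in the other) is also correct.

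The gap is the step you dismiss as bookkeeping: ``summing over $m$ and factoring the resulting double sum.'' With the norm defined just above the proposition, $\|f\|^2=\sum_k\|f_k\|_k^2$, the triangle inequality only gives that the sequence $\{\|(fg)_m\|_m\}_m$ is dominated by the convolution of $\{\|f_k\|_k\}$ and $\{\|g_l\|_l\}$, and the $\ell^2$ norm of a convolution is \emph{not} bounded by the product of the $\ell^2$ norms (Young gives $\ell^1*\ell^2\subset\ell^2$, not $\ell^2*\ell^2\subset\ell^2$), so the double sum does not factor. Indeed the global inequality fails for this norm as literally defined: taking $f=g=1+x_1$ (all weights $\alpha!/|\alpha|!$ equal $1$ on powers of $x_1$) gives $\|f\|^2=\|g\|^2=2$ but $\|fg\|^2=\|1+2x_1+x_1^2\|^2=6>4$. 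So the first bullet can only be meant, and can only be proved, in the degreewise/homogeneous sense you established (which is the form stated in \cite{stolo-lombardi} and the only form the present paper uses, e.g.\ to get $|fg|_r\le|f|_r|g|_r$), or for an $\ell^1$-type weighted norm such as $|\cdot|_r$ of Section~\ref{section-norms}. Your write-up should state this restriction explicitly rather than present the summation over $m$ as a routine factorization; as written, that step is not merely unproved but false for the displayed norm $\|\cdot\|$.
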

This scalar product induces a scalar product on $\mathcal{V}_\delta$, the space of homogeneous vector fields of degree $\delta$ in $n$ variables. Such a vector field $V$ can be written as $V=\sum_{k=1}^{n} V_k
\pp{x_k}$, where $V_k\in\mathcal{P}_\delta$.  
\begin{align}
\label{eq:inpvv}
\Big<\sum_{k=1}^{n} V_k \pp{x_k},\sum_{k=1}^{n} W_k \pp{x_k}\Big>_\delta= \sum_{k=1}^{n} \left<V_k,W_k\right>_{\delta}.
\end{align}

\subsection{The analytic norm}\label{section-norms}
Let us define, for $\delta\in \Bbb N$, $c_\delta:=\big(\sum_{|Q|=\delta} \frac{|Q|!}{Q!}\big)^{1/2}$.  Let $f$ be a formal power series vanishing at the origin. It can be written as a sum of homogeneous polynomials $f=\sum_{\delta\geq 1}f_\delta$, where $f_\delta\in \mathcal{P}_\delta$. We define, for $r\geq 0$,
\begin{equation}\label{def-normr}
 |f|_r:=\sum_{\delta\geq 1}||f_\delta||_\delta c_\delta r^\delta,
\end{equation}
in this formula $||f_\delta||_\delta$ is the norm associated to the scalar product (\ref{eq:inp})~:
$$
||f_\delta||_\delta^2=||\sum_{|k|=\delta}f_k x^k ||_\delta^2=\sum_{|k|=\delta}|f_k|^2 \frac{k!}{|k|!}.
$$
In a similar manner, we extend this norm to the space of vector fields using $(\ref{eq:inpvv})$.
\begin{lemma}
\label{lem:cdelta_afschatting}
 $c_{\delta}=n^{\delta/2}$.
\end{lemma}
\begin{proof}
 This follows immediately by choosing $z_i=1$, $1\leq i\leq n$ in the formula (see, for instance, \cite{lombardi-nf}[lemma A.1.])
 \begin{align*}
 (z_1+\ldots+z_n)^\delta=\sum_{|Q|=\delta}\frac{|Q|!}{Q!}z^Q.
\end{align*}
\end{proof}
\begin{lemma}\label{prod-norm-r}
Let $f,g$ be formal power series. Then $|fg|_r\leq |f|_r|g|_r$.
\end{lemma}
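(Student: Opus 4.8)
The plan is to reduce the submultiplicativity of $|\cdot|_r$ to the submultiplicativity of $\|\cdot\|$ established in the cited proposition from \cite{stolo-lombardi}, combined with the explicit evaluation $c_\delta = n^{\delta/2}$ from Lemma \ref{lem:cdelta_afschatting}. First I would write $f=\sum_{\delta\geq 1}f_\delta$ and $g=\sum_{\delta\geq 1}g_\delta$ as sums of homogeneous components, so that the degree-$\delta$ component of $fg$ is $(fg)_\delta = \sum_{i+j=\delta} f_i g_j$. By the triangle inequality for $\|\cdot\|_\delta$ and the fact (from the cited proposition) that $\|f_i g_j\| \leq \|f_i\|_i\,\|g_j\|_j$, we get $\|(fg)_\delta\|_\delta \leq \sum_{i+j=\delta}\|f_i\|_i\,\|g_j\|_j$.

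Next, using Lemma \ref{lem:cdelta_afschatting} the weight factors are purely exponential: $c_\delta r^\delta = (n^{1/2}r)^\delta = (n^{1/2}r)^i\,(n^{1/2}r)^j = c_i r^i\, c_j r^j$ whenever $i+j=\delta$. Hence
\begin{align*}
|fg|_r = \sum_{\delta\geq 1}\|(fg)_\delta\|_\delta\, c_\delta r^\delta
\leq \sum_{\delta\geq 1}\sum_{i+j=\delta}\|f_i\|_i\,\|g_j\|_j\, c_i r^i\, c_j r^j
= \Big(\sum_{i\geq 1}\|f_i\|_i\, c_i r^i\Big)\Big(\sum_{j\geq 1}\|g_j\|_j\, c_j r^j\Big) = |f|_r\,|g|_r,
\end{align*}
where the rearrangement of the double sum into a product is justified because all terms are nonnegative. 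The same argument applies verbatim to vector fields, since the norm on $\mathcal{V}_\delta$ is built componentwise from the norm on $\mathcal{P}_\delta$ via (\ref{eq:inpvv}) and the product of a scalar power series with a vector field is taken componentwise, so submultiplicativity of $\|\cdot\|$ passes through each coordinate.

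I do not expect a genuine obstacle here: the only points requiring a little care are (i) invoking the scalar-multiplicativity $\|f_i g_j\|\le\|f_i\|_i\|g_j\|_j$ correctly from the cited proposition (it is stated there for formal power series, and applies to homogeneous ones), and (ii) the factorization $c_\delta r^\delta = c_i r^i c_j r^j$, which is exactly where Lemma \ref{lem:cdelta_afschatting} is used — without the clean formula $c_\delta=n^{\delta/2}$ this step would fail, since in general $c_\delta$ is not multiplicative in $\delta$. Everything else is the standard manipulation of Cauchy products of nonnegative series. If one wants to be fully rigorous about convergence, one notes that the inequality holds termwise for the partial sums and then passes to the limit (or simply works in $[0,+\infty]$ throughout, where the identity $\sum_\delta\sum_{i+j=\delta}a_i b_j = (\sum_i a_i)(\sum_j b_j)$ is unconditionally valid).
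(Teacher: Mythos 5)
Your proof is correct and follows essentially the same route as the paper: decompose into homogeneous components, apply the submultiplicativity $\|f_ig_j\|\leq\|f_i\|_i\|g_j\|_j$ of the underlying norm together with the triangle inequality, use $c_\delta=n^{\delta/2}$ to factor $c_\delta r^\delta=c_ir^i\,c_jr^j$, and rearrange the nonnegative double sum into a product. The extra remarks on vector fields and on convergence are fine but not needed beyond what the paper records.
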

\begin{proof}
We have 
$$
|fg|_r= \sum_{k\geq 0} \left\|\{fg\}_k\right\|_k c_k r^k = \sum_{k\geq 0} \left\|\sum_{i+j=k}f_ig_j\right\|_k c_k r^k.
$$
We recall that $\{fg\}_k$ denotes the homogeneous part of degree $k$ of $fg$ in the Taylor expansion at the origin.
Since the scalar product $(\ref{eq:inp})$ defines a Banach algebra norm, we have $\left\|\sum_{i+j=k}f_ig_j\right\|_k\leq  \sum_{i+j=k}\|f_i\|_i\|g_j\|_j$. Since $c_k=n^{k/2}=c_ic_j$ if $i+j=k$, then
$$
|fg|_r\leq \sum_{k\geq 0}\sum_{i+j=k}\|f_i\|_ic_ir^i\|g_j\|_jc_jr^j=\left(\sum_{i\geq 0} \left\|f_i\right\|_i c_i r^i\right)\left(\sum_{j\geq 0} \left\|g_j\right\|_j c_j r^j\right)
$$
\end{proof}
\begin{lemma}
 Suppose that $|u|_R\leq r$, then $|f\circ u|_R \leq |f|_r$
\end{lemma}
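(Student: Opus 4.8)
The plan is to reduce the composition estimate to the multiplicativity of $|\cdot|_R$ that was just proved in Lemma \ref{prod-norm-r}, applied term by term to the Taylor expansion of $f$. Write $f=\sum_{\delta\geq 1}f_\delta$ with $f_\delta\in\mathcal P_\delta$; then formally $f\circ u=\sum_{\delta\geq 1}f_\delta\circ u$, and since $f_\delta$ is a homogeneous polynomial of degree $\delta$ in the coordinates, $f_\delta\circ u$ is a (finite) sum of $\delta$-fold products of the components of $u$. The first step is therefore to record that the norm $|\cdot|_R$ on vector-valued power series dominates the norm of each scalar component, so that $|u_i|_R\le |u|_R\le r$ for each component $u_i$ of $u$ (this is immediate from the definition (\ref{def-normr}) together with (\ref{eq:inpvv})).

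Next I would use sub-multiplicativity: by Lemma \ref{prod-norm-r} applied repeatedly, any monomial of degree $\delta$ evaluated at $u$, say $\prod_{k}u_{i_k}$ with $\delta$ factors, satisfies $\big|\prod_k u_{i_k}\big|_R\le \prod_k |u_{i_k}|_R\le r^\delta$. Summing the monomials that make up $f_\delta$ with their coefficients, and comparing with the way $\|f_\delta\|_\delta c_\delta$ controls those coefficients (using again that $c_\delta=n^{\delta/2}$ bookkeeps the combinatorial multiplicities exactly, as in the proof of Lemma \ref{prod-norm-r}), one gets $|f_\delta\circ u|_R\le \|f_\delta\|_\delta\, c_\delta\, r^\delta$. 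The key point is that this is precisely the $\delta$-th term of $|f|_r$ as defined in (\ref{def-normr}).

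Finally, by the triangle inequality for $|\cdot|_R$ (which holds since it is a norm, being a weighted $\ell^1$-type norm on Taylor coefficients),
\[
|f\circ u|_R\le \sum_{\delta\ge 1}|f_\delta\circ u|_R\le \sum_{\delta\ge 1}\|f_\delta\|_\delta\,c_\delta\,r^\delta=|f|_r,
\]
which is the claim. The one subtlety to handle carefully is the bookkeeping of the combinatorial coefficients: one must check that expanding $f_\delta\circ u$ into monomials in the components of $u$ and then applying sub-multiplicativity term by term does not lose the factor $c_\delta$, i.e. that the constants $c_\delta=n^{\delta/2}$ are exactly compatible with composition the same way they were shown compatible with multiplication in Lemma \ref{prod-norm-r}. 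Once that is checked the rest is a routine application of the already-established sub-multiplicativity and the triangle inequality, so I expect no real obstacle beyond this bookkeeping.
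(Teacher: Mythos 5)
Your overall skeleton matches the paper's: expand $f=\sum_{Q}f_Qx^Q$, use the triangle inequality and the sub-multiplicativity of $|\cdot|_R$ from Lemma \ref{prod-norm-r} to bound each monomial contribution by $|f_Q|\,|u|_R^{|Q|}\le |f_Q|\,r^{|Q|}$. But the decisive step --- passing from the resulting $\ell^1$-type bound $\sum_{|Q|=\delta}|f_Q|\,r^{\delta}$ to the desired term $\|f_\delta\|_\delta\,c_\delta\,r^\delta$ of $|f|_r$ --- is exactly the point you defer as ``bookkeeping'', and the mechanism you point to for settling it is not the right one. The compatibility of $c_\delta$ with multiplication used in Lemma \ref{prod-norm-r} was the identity $c_ic_j=c_{i+j}$ together with the Banach-algebra property of $\|\cdot\|_k$; that identity plays no role here. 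What is needed is a Cauchy--Schwarz argument comparing the plain coefficient sum with the weighted $\ell^2$ norm:
\[
\sum_{|Q|=\delta}|f_Q|
=\sum_{|Q|=\delta}|f_Q|\sqrt{\tfrac{Q!}{|Q|!}}\,\sqrt{\tfrac{|Q|!}{Q!}}
\le\Big(\sum_{|Q|=\delta}|f_Q|^2\tfrac{Q!}{|Q|!}\Big)^{1/2}\Big(\sum_{|Q|=\delta}\tfrac{|Q|!}{Q!}\Big)^{1/2}
=\|f_\delta\|_\delta\,c_\delta,
\]
where the second factor equals $c_\delta$ by its very definition $c_\delta=\big(\sum_{|Q|=\delta}\tfrac{|Q|!}{Q!}\big)^{1/2}$, not because $c_\delta=n^{\delta/2}$. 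This is precisely the step the paper's proof performs (and flags as ``Apply Cauchy Schwartz''); without it your chain of inequalities stops at $\sum_{\delta}\sum_{|Q|=\delta}|f_Q|\,r^\delta$, which is not yet $|f|_r$.

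Once this inequality is inserted, your argument closes exactly as the paper's does, so the strategy is sound and the gap is a single missing estimate rather than a wrong approach; but as written the proposal asserts $|f_\delta\circ u|_R\le\|f_\delta\|_\delta\,c_\delta\,r^\delta$ without proof, and the analogy with the multiplicativity of $c_\delta$ that you invoke would not produce it.
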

\begin{proof}
Let $f=\sum_{Q\in \Bbb N^n}f_Qx^Q$ be the Taylor expansion of $f$ at the origin.
 \begin{eqnarray*}
  |f\circ u|_R&=&\left|\sum_{\delta\geq 1}\sum_{|Q|=\delta} f_Q.(u(x))^Q\right|_R \leq \sum_{\delta\geq 1}\sum_{|Q|=\delta} |f_Q|.|(u(x))^Q|_R\\
&\leq & \sum_{\delta\geq 1}\sum_{|Q|=\delta} |f_Q|.|(u(x))|_R^{|Q|}\leq \sum_{\delta\geq 1}\sum_{|Q|=\delta} |f_Q| r^{|Q|}\\
&\leq &\sum_{\delta\geq 1}\sum_{|Q|=\delta} |f_Q|\sqrt{\frac{Q!}{|Q|!}}\sqrt{\frac{|Q|!}{Q!}} r^{|Q|}\\
& &\hphantom{\leq}\text{(Apply Cauchy Schwartz)}\\
&\leq& \sum_{\delta\geq 1}\Big(\sum_{|Q|=\delta} |f_Q|^2\frac{Q!}{|Q|!}\Big)^{1/2}\Big(\sum_{|Q|=\delta} \frac{|Q|!}{Q!} r^{2|Q|}\Big)^{1/2}\\
&\leq &\sum_{\delta\geq 1}\Big(\sum_{|Q|=\delta} |f_Q|^2\frac{Q!}{|Q|!}\Big)^{1/2}c_\delta r^\delta =\sum_{\delta\geq 1} ||f_\delta||_\delta c_\delta r^\delta\\
&=&|f|_r.
\end{eqnarray*}
\end{proof}

\begin{lemma}\label{cauchy}
 Let $f$ be a polynomial of degree at most $m$, then $|\p{f}{x_i}|_r\leq \frac{m}{r}|f|_r$.
\end{lemma}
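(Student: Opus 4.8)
The plan is to split everything into homogeneous components and reduce the claim to a single homogeneous inequality. Write $f=\sum_{\delta=1}^{m}f_\delta$ with $f_\delta=\sum_{|\alpha|=\delta}a_\alpha x^\alpha\in\mathcal{P}_\delta$. Then $\p{f}{x_i}=\sum_{\delta=1}^{m}\p{f_\delta}{x_i}$ with $\p{f_\delta}{x_i}\in\mathcal{P}_{\delta-1}$, so, reading $|\cdot|_r$ as the evident extension of (\ref{def-normr}) to formal series that need not vanish at $0$ (here $\p{f}{x_i}$ may carry a degree-zero term),
$$\Bigl|\p{f}{x_i}\Bigr|_r=\sum_{\delta=1}^{m}\Bigl\|\p{f_\delta}{x_i}\Bigr\|_{\delta-1}\,c_{\delta-1}\,r^{\delta-1}.$$
Everything then comes down to bounding $\bigl\|\p{f_\delta}{x_i}\bigr\|_{\delta-1}$ in terms of $\|f_\delta\|_\delta$.

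The key step will be the homogeneous estimate $\bigl\|\p{f_\delta}{x_i}\bigr\|_{\delta-1}\le\delta\,\|f_\delta\|_\delta$, which I would get by a direct computation on monomials. Writing $e_i$ for the $i$-th unit multi-index, one has $\p{f_\delta}{x_i}=\sum_{\alpha_i\ge1}\alpha_i\,a_\alpha\,x^{\alpha-e_i}$, and since $(\alpha-e_i)!=\alpha!/\alpha_i$ and $(\delta-1)!=\delta!/\delta$, the definition (\ref{eq:inp}) of the scalar product gives
$$\Bigl\|\p{f_\delta}{x_i}\Bigr\|_{\delta-1}^2=\sum_{\alpha_i\ge1}\alpha_i^2\,|a_\alpha|^2\,\frac{(\alpha-e_i)!}{(\delta-1)!}=\sum_{\alpha_i\ge1}\alpha_i\,|a_\alpha|^2\,\frac{\alpha!}{(\delta-1)!}\le\delta\sum_{|\alpha|=\delta}|a_\alpha|^2\,\frac{\alpha!}{(\delta-1)!}=\delta^2\,\|f_\delta\|_\delta^2,$$
where the inequality is just $\alpha_i\le|\alpha|=\delta$. (Summing over $i$ one even obtains the identity $\sum_{i}\bigl\|\p{f_\delta}{x_i}\bigr\|_{\delta-1}^2=\delta^2\|f_\delta\|_\delta^2$, the usual ``number operator'' relation, but the termwise bound is all that is needed.)

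It then remains to assemble the pieces. By Lemma \ref{lem:cdelta_afschatting} one has $c_{\delta-1}=n^{(\delta-1)/2}\le n^{\delta/2}=c_\delta$, and $\delta\le m$ for $1\le\delta\le m$, hence
$$\Bigl|\p{f}{x_i}\Bigr|_r\le\sum_{\delta=1}^{m}\delta\,\|f_\delta\|_\delta\,c_\delta\,r^{\delta-1}=\frac1r\sum_{\delta=1}^{m}\delta\,\|f_\delta\|_\delta\,c_\delta\,r^{\delta}\le\frac{m}{r}\sum_{\delta=1}^{m}\|f_\delta\|_\delta\,c_\delta\,r^{\delta}=\frac{m}{r}\,|f|_r,$$
which is the assertion. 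I do not expect any genuine obstacle here: this is an elementary weighted $\ell^1$ estimate, the only mildly delicate points being the bookkeeping of the factorial weights in the homogeneous step and the harmless degree shift $\delta\mapsto\delta-1$, which is absorbed by $c_{\delta-1}\le c_\delta$ together with one power of $r$.
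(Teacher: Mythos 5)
Your proof is correct and follows essentially the same route as the paper: the termwise monomial estimate $\|\partial f_\delta/\partial x_i\|_{\delta-1}\le\delta\|f_\delta\|_\delta$ via the factorial weights (using $\alpha_i\le\delta$), then summation over $\delta$ with $c_{\delta-1}\le c_\delta$ and $\delta\le m$, extracting one power of $r$. The only additions are harmless remarks (the constant-term convention and the ``number operator'' identity) not present in the paper's argument.
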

\begin{proof}
We remark first that it readily follows from Lemma~\ref{lem:cdelta_afschatting} that $c_{\delta-1}\leq c_\delta$.
We estimate
\begin{eqnarray*}
 \Big|\p{f_\delta}{x_j}\Big|_{\delta-1}^2&\leq & \sum_{|Q|=\delta}|f_Q|^2 q_j^2\frac{Q!}{q_j(\delta-1)!}\leq \sum_{|Q|=\delta}|f_Q|^2\frac{Q!}{\delta!}\frac{\delta!q_j}{(\delta-1)!}\\
&\leq &\sum_{|Q|=\delta}|f_Q|^2\frac{Q!}{\delta!}(\delta q_j)\leq \delta^2\sum_{|Q|=\delta}|f_Q|^2\frac{Q!}{\delta!}. 
\end{eqnarray*}
Hence we have $|\p{f_\delta}{x_i}|_{\delta-1} \leq \delta|f_\delta|_\delta$. As a consequence, we obtain the following estimates~:
\begin{eqnarray*}
 \Big|\p{f}{x_i}\Big|_r&= &\sum_{\delta=1}^{m}\left|\p{f_\delta}{x_i}\right|_{\delta-1}c_{\delta-1}r^{\delta-1} \leq \sum_{\delta=1}^{m}\delta|f_\delta|_{\delta}c_{\delta-1}r^{\delta-1}\\
&\leq & \frac{m}{r}\sum_{\delta=1}^{m}|f_\delta|_{\delta}c_{\delta-1}r^{\delta} \leq \frac{m}{r}\sum_{\delta=1}^{m}|f_\delta|_{\delta}c_{\delta}r^{\delta}\\
&\leq & \frac{m}{r}|f|_r.
\end{eqnarray*}
\end{proof}

\subsection{Normal forms}\label{nf-section}

Let $F=\sum_{i=1}^n\left(\sum_{j=1}^nf_{i,j}x_j\right)\frac{\partial}{\partial x_i}$ be a linear vector field on $\Bbb C^n$. It acts by derivation as a linear map from $\cP_{\de}$ to itself. According to lemma \ref{belitskii}, the adjoint of this operator w.r.t. Belitskii scalar product is the derivation 
$$
F^*:= \sum_{j=1}^n\left(\sum_{i=1}^n\bar f_{i,j}x_i\right)\frac{\partial}{\partial x_j}.
$$
The adjoint w.r.t. the scalar product $(\ref{eq:inp})$ is the same. Let $\widetilde{N}$ be a nilpotent linear vector field in $\Bbb C^n$.
 Let us first recall the fundamental result that links the $\slt$-triple actions and adjoint operators with respect to the scalar product. 
\begin{prop}\cite{verstringe-nil}
Assume that $\ti N$ is regular. Then, there exists a linear change of coordinates $L$ such that $N=L_*\widetilde{N}$ and its adjoint $N^*$ generate an $\slt$-triple. More precisely, the differential operators $X=N^*$, $Y=N$ and $H=N^{*}N-NN^{*}$, acting on germs of holomorphic functions or formal power series, fulfill the $\slt$-relations from formula (\ref{al:sl2relations}).  Furthermore, $N$ acts
on vector fields of $V\in\mathcal{V}_\delta$, $\de\geq 2$, as 
$$
d_{0,\delta}(V):=[N,V],
$$ 
where $[.,.]$ denotes the Lie bracket of vector fields. Then, its adjoint $d_{0,\delta}^*$ w.r.t. to the scalar product $(\ref{eq:inpvv})$ is $d_{0,\delta}^*(V):=\left[N^*,V\right]$
and  the triple $X=d_{0,\delta}^*$,
$Y=d_{0,\delta}$, $H=[d_{0,\delta}^*,d_{0,\delta}]$ satisfies the $\slt$-relations from formula (\ref{al:sl2relations}).
\end{prop}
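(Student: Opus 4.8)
The plan is to derive the proposition from the Jacobson--Morozov theorem combined with Weyl's unitary trick, and then to propagate the resulting $\slt$-structure from linear forms up to $\mathcal{P}_\delta$, to $\widehat{\cO}_n$ and to polynomial vector fields by functoriality. \emph{Step 1: the linear change of coordinates.} Since $\widetilde N$ is nilpotent, Jacobson--Morozov \cite{serre-semi,Lie78-bourbaki} embeds it into an $\slt$-triple $\{\widetilde N,\widetilde H,\widetilde M\}$ of endomorphisms of $\Bbb C^n$, say with $[\widetilde M,\widetilde N]=\widetilde H$, $[\widetilde H,\widetilde M]=2\widetilde M$, $[\widetilde H,\widetilde N]=-2\widetilde N$. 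Restricting this representation of $\slt$ to the compact real form $\mathfrak{su}(2)$ and averaging an arbitrary Hermitian inner product over the group $SU(2)$ produces a Hermitian inner product $\langle\cdot,\cdot\rangle_0$ on $\Bbb C^n$ for which every element of $\mathfrak{su}(2)$ is skew-adjoint; since $\mathfrak{su}(2)$ is the real span of $\mathrm{i}\widetilde H$, $\widetilde N-\widetilde M$ and $\mathrm{i}(\widetilde N+\widetilde M)$, this forces $\widetilde H$ to be self-adjoint and $\widetilde M$ to be the $\langle\cdot,\cdot\rangle_0$-adjoint of $\widetilde N$. Taking for $L$ the matrix carrying a $\langle\cdot,\cdot\rangle_0$-orthonormal basis to the canonical one and setting $N:=L_*\widetilde N$, $M:=L_*\widetilde M$, $H:=L_*\widetilde H$, the form $\langle\cdot,\cdot\rangle_0$ becomes the standard Hermitian product, which under $\Bbb C^n\cong\mathcal{P}_1$, $e_i\leftrightarrow x_i$, is exactly the scalar product (\ref{eq:inp}) in degree one; hence its adjoint becomes the $N^{*}$ of the proposition, $M=N^{*}$ and $H=H^{*}$, while the $\slt$-relations survive conjugation. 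Rereading these relations for the corresponding derivations of $\Bbb C[x_1,\dots,x_n]$ — and absorbing the bracket-reversing convention of the map ``matrix $\mapsto$ linear vector field'' into the definition $H=N^{*}N-NN^{*}$ — one checks that $X=N^{*}$, $Y=N$, $H=[N^{*},N]$ obey (\ref{al:sl2relations}) on $\mathcal{P}_1$, hence on all of $\Bbb C[x]$ (a relation between derivations is tested on the generators $x_i$), hence on $\widehat{\cO}_n$ and on convergent germs.

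\emph{Step 2: passing to $\mathcal{P}_\delta$ and to vector fields.} The identities of Step 1 are operator identities on $\Bbb C[x]$ and restrict to each invariant subspace $\mathcal{P}_\delta$. That the adjoint on $\mathcal{P}_\delta$, with respect to (\ref{eq:inp}), of $F=\sum_{i,j}f_{ij}x_j\partial_{x_i}$ equals $F^{*}=\sum_{i,j}\bar f_{ij}x_i\partial_{x_j}$ follows from Lemma~\ref{belitskii}: since $x_j$ and $\partial_{x_j}$ are mutually adjoint, $\langle x_j\partial_{x_i}f,g\rangle_\delta=\langle\partial_{x_i}f,\partial_{x_j}g\rangle_{\delta-1}=\langle f,x_i\partial_{x_j}g\rangle_\delta$, and summing against $f_{ij}$ gives $\langle Ff,g\rangle_\delta=\langle f,F^{*}g\rangle_\delta$ — the factor $1/|\alpha|!$ relating (\ref{eq:inp}) to Fischer--Belitskii's product is constant on $\mathcal{P}_\delta$ and irrelevant here. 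With $F=N$ this gives the stated $N^{*}$. For vector fields, $\text{ad}\colon V\mapsto[V,\cdot]$ is a Lie algebra homomorphism of polynomial vector fields, so $\{\text{ad}_{N^{*}},\text{ad}_N,[\text{ad}_{N^{*}},\text{ad}_N]=\text{ad}_{[N^{*},N]}\}$ automatically inherits the $\slt$-relations and restricts to the invariant subspace $\mathcal{V}_\delta$; it remains only to identify $d_{0,\delta}^{\,*}$. From $[N,V]=\sum_i\big(N(V_i)-\sum_k n_{ik}V_k\big)\partial_{x_i}$, the componentwise product (\ref{eq:inpvv}), and Step~2's adjointness on each component,
$$
\langle[N,V],W\rangle_\delta=\sum_k\Big\langle V_k,\;N^{*}(W_k)-\sum_i\overline{n_{ik}}\,W_i\Big\rangle_\delta=\langle V,[N^{*},W]\rangle_\delta ,
$$
since $N^{*}(W_k)-\sum_i\overline{n_{ik}}W_i$ is precisely the $k$-th component of $[N^{*},W]$. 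Hence $d_{0,\delta}^{\,*}=[N^{*},\cdot]$ and $[d_{0,\delta}^{\,*},d_{0,\delta}]=\text{ad}_{[N^{*},N]}|_{\mathcal{V}_\delta}=\text{ad}_H|_{\mathcal{V}_\delta}$, which closes the argument.

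\emph{Main obstacle.} The only real content is Step~1: Jacobson--Morozov alone produces merely \emph{some} triple through $\widetilde N$, and the crux is to arrange, via the unitary trick, that the third member of the triple is the \emph{Hermitian} adjoint $N^{*}$ of $N$ — this is exactly what yields the orthogonal splitting $\mathcal{V}_\delta=\Img d_{0,\delta}\stackrel{\bot}{\oplus}\Ker d_{0,\delta}^{\,*}$ on which the whole normal-form scheme rests. The only genuinely delicate bookkeeping is the bracket-reversing convention between matrices and linear vector fields (equivalently, the decision of which of $N$, $N^{*}$ is the raising and which the lowering operator). The regularity of $\widetilde N$ enters only to guarantee that $\Bbb C^n$ has no trivial $\slt$-summand, i.e.\ that $N^{*}$ vanishes on no Jordan string.
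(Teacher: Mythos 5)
Your argument is correct, but there is nothing in the paper to compare it with: the proposition is quoted from Bonckaert--Verstringe \cite{verstringe-nil} and is not reproved here, so your write-up is in effect a self-contained substitute for that citation. The most concrete way to get the statement is the explicit one -- put $\widetilde N$ in Jordan form and rescale each Jordan string so that $N$ acts on it with the coefficients $\sqrt{m(\lambda-m+1)}$ of the standard irreducible $\slt$-module, after which the conjugate transpose is visibly the raising operator -- whereas your route (Jacobson--Morozov, then Weyl's unitary trick: average a Hermitian product over $SU(2)$ so that the third member of the triple becomes the Hermitian adjoint of $N$, then push the relations through the map $A\mapsto\sum_{i,j}a_{ij}x_j\partial_{x_i}$ and through $\mathrm{ad}$) is a clean abstract alternative giving the same normalization. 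I checked the one point you leave as ``one checks'': with your matrix convention $[\widetilde M,\widetilde N]=\widetilde H$, $[\widetilde H,\widetilde M]=2\widetilde M$, $[\widetilde H,\widetilde N]=-2\widetilde N$, the anti-homomorphism property $[\,\cdot\,,\cdot\,]\mapsto$ reversed bracket of matrices enters twice in each of the relations $[H,X]=2X$, $[H,Y]=-2Y$, so the differential operators $X=N^*$, $Y=N$, $H=N^*N-NN^*$ do satisfy exactly (\ref{al:sl2relations}) as stated; and your identification of $d_{0,\delta}^*=[N^*,\cdot\,]$ from Lemma \ref{belitskii} and the componentwise product (\ref{eq:inpvv}) is the same computation the paper sketches for $F^*$ at the start of Section \ref{nf-section}. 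Two harmless remarks: the passage from the Belitskii product to (\ref{eq:inp}) is indeed irrelevant for adjoints of degree-preserving operators, as you say; and your proof never genuinely uses regularity (zero Jordan blocks would only add trivial summands on which $N$, $N^*$, $H$ all vanish), which is fine since regularity is a standing hypothesis of the proposition and is what \cite{verstringe-nil} exploit elsewhere (e.g.\ for the small-divisor bounds), not something your argument needs to recover.
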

The operator $d_0$ is called the {\bf cohomological operator} and we shall define the {\bf box operator} of degree $\delta$, to be \begin{equation}\label{bef-box}\square_\delta:=d_{0,\delta}d_{0,\delta}^*.\end{equation}
We recall the main formal normal form result~: 
\begin{prop}\cite{stolo-lombardi}
Let $X:=N+R_{\geq 2}$ be a nonlinear holomorphic (or formal) perturbation of the nilpotent linear vector field $N$ in a neighborhood of the origin of $\Bbb
C^n$. Then, there exists a formal
diffeomorphism $\hat\Phi$ tangent to the identity that
 conjugates $X$ to a formal normal form; that is $\hat \Phi_*X-N\in \text{ Ker ad}_{N^*}$
Moreover, there exists a unique normalizing diffeomorphism $\Phi=\Id+\Ucal $
such that $\Ucal$ has a zero projection on the kernel of $d_0=[S,.]$.
\end{prop}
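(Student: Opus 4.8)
The plan is to construct $\hat\Phi$ one homogeneous degree at a time, each step amounting to solving a single linear equation in a finite-dimensional space, and then to obtain uniqueness by pinning down the ambiguous part of the solution. We first record the linear algebra in each degree. Fix $\delta\ge 2$ and work in the finite-dimensional Hermitian space $\mathcal{V}_\delta$ equipped with $\langle\cdot,\cdot\rangle_\delta$. By the preceding proposition the cohomological operator $d_{0,\delta}=[N,\cdot]$ has adjoint $d_{0,\delta}^*=[N^*,\cdot]$ and $\{d_{0,\delta}^*,\,d_{0,\delta},\,[d_{0,\delta}^*,d_{0,\delta}]\}$ is an $\slt$-triple, so that $\mathcal{C}_\delta:=\Ker d_{0,\delta}^*=\Ker\operatorname{ad}_{N^*}$ is the normal-form space. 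As for any operator on an inner-product space one has the orthogonal splittings $\mathcal{V}_\delta=\operatorname{Im}d_{0,\delta}\stackrel{\bot}{\oplus}\Ker d_{0,\delta}^*$ and $\mathcal{V}_\delta=\operatorname{Im}d_{0,\delta}^*\stackrel{\bot}{\oplus}\Ker d_{0,\delta}$, and $d_{0,\delta}$ restricts to a linear isomorphism of $\tilde V_\delta:=\operatorname{Im}d_{0,\delta}^*=(\Ker d_{0,\delta})^\bot$ onto $\operatorname{Im}d_{0,\delta}$. Write $\pi_\delta^{\mathrm{im}}$ and $\pi_\delta^{\mathrm{ker}}$ for the orthogonal projections of $\mathcal{V}_\delta$ onto $\operatorname{Im}d_{0,\delta}$ and onto $\Ker d_{0,\delta}$.

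\emph{Existence, by induction on $m\ge 1$.} I claim there is a polynomial diffeomorphism $\Phi^{(m)}=(\id+U_m)\circ\cdots\circ(\id+U_2)$ with each $U_j\in\tilde V_j$ homogeneous of degree $j$, such that $(\Phi^{(m)})^{-1}_*X=NF_m+R_{\ge m+1}$ with $NF_m-N=\sum_{j=2}^m v_j$, $v_j\in\mathcal{C}_j$. For $m=1$ take $\Phi^{(1)}=\id$. Assuming the claim at stage $m-1$, set $X':=(\Phi^{(m-1)})^{-1}_*X=NF_{m-1}+R'_{\ge m}$ and let $R'_m$ be the degree-$m$ part of $R'_{\ge m}$. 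As recalled in the Introduction, conjugating by $(\id+U_m)^{-1}$ with $U_m$ homogeneous of degree $m$ leaves the terms of degree $<m$ unchanged and replaces the degree-$m$ term by $[N,U_m]+R'_m=d_{0,m}(U_m)+R'_m$. Since $\pi_m^{\mathrm{im}}(R'_m)\in\operatorname{Im}d_{0,m}$ and $d_{0,m}|_{\tilde V_m}$ is invertible, the cohomological equation $d_{0,m}(U_m)=-\pi_m^{\mathrm{im}}(R'_m)$ has a unique solution $U_m\in\tilde V_m$; with this choice the new degree-$m$ term is $v_m:=(\operatorname{id}-\pi_m^{\mathrm{im}})(R'_m)\in\Ker d_{0,m}^*=\mathcal{C}_m$, which closes the induction. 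As each $U_j$ has order $\ge j\ge 2$, the maps $\Phi^{(m)}$ stabilize in every fixed degree, so $\hat\Phi:=\lim_m\Phi^{(m)}$ is a well-defined formal diffeomorphism tangent to the identity with $\hat\Phi_*X-N=\sum_{j\ge 2}v_j\in\Ker\operatorname{ad}_{N^*}$. Every step is algebraic, so no convergence issue arises at this formal level.

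\emph{Uniqueness.} Let $\Phi=\Id+\Ucal$ be any formal diffeomorphism tangent to the identity with $\Phi^{-1}_*X-N\in\Ker\operatorname{ad}_{N^*}$, and write $\Ucal=\sum_{k\ge 2}\Ucal_k$ with $\Ucal_k\in\mathcal{V}_k$. Expanding the conjugacy relation in homogeneous degrees, its degree-$k$ component has the form $d_{0,k}(\Ucal_k)+\Lambda_k=v_k\in\mathcal{C}_k$, where $\Lambda_k$ is an explicit polynomial expression in $\Ucal_2,\dots,\Ucal_{k-1}$ and in the jets of $X$ (it gathers the contributions of $[N,\cdot]$ on the composition $(\id+\Ucal_k)\circ\cdots$ and of the Taylor expansion of $X$). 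Consequently, once $\Ucal_2,\dots,\Ucal_{k-1}$ are fixed, $v_k$ is determined and $\Ucal_k$ is determined modulo $\Ker d_{0,k}$ — equivalently its component $(\operatorname{id}-\pi_k^{\mathrm{ker}})(\Ucal_k)\in\tilde V_k$ is forced — so the only residual freedom is $\pi_k^{\mathrm{ker}}(\Ucal_k)\in\Ker d_{0,k}$. Imposing $\pi_k^{\mathrm{ker}}(\Ucal_k)=0$ for all $k$ therefore determines $\Ucal$, hence $\Phi$, uniquely; and the $\hat\Phi$ built above can be arranged to satisfy this normalization, either by checking that the composition of the $(\id+U_j)$, $U_j\in(\Ker d_{0,j})^\bot$, introduces no $\Ker d_{0,k}$-component, or — more cleanly — by re-running the induction with the normalization imposed on the $\Ucal_k$'s, using the $\Ker d_{0,k}$-part of $U_k$ left undetermined by the cohomological equation to cancel the $\Ker d_{0,k}$-component produced by the lower-order terms. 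This yields the unique normalizing diffeomorphism with $\Ucal$ of zero projection onto $\Ker d_0$.

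The genuinely delicate point is this last bookkeeping: writing the degree-$k$ reduction of the conjugacy equation precisely enough to see that the indeterminacy of $\Ucal_k$ is exactly $\Ker d_{0,k}$, and that the normalization $\pi_k^{\mathrm{ker}}(\Ucal_k)=0$ can be met simultaneously in all degrees — one must track the discrepancy between ``$U_j\in\tilde V_j$ at each step'' and ``$\Ucal_k\in\tilde V_k$ for the total jet'' through the composition. The per-degree existence and the solvability of the cohomological equation are immediate from the orthogonal splittings provided by the $\slt$-triple recorded above.
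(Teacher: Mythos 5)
Your proposal is correct and follows essentially the same route as the paper: the proposition is quoted from \cite{stolo-lombardi} and the paper's introduction sketches exactly your scheme, namely the degree-by-degree solution of the cohomological equation $d_0(U_k)=-\pi_{\text{Im }d_0}R_k$ with $U_k$ taken in the supplementary space $\tilde V_k=\operatorname{Im} d_0^*=(\Ker d_0)^{\perp}$ furnished by the scalar product and the $\slt$-structure. Your extra bookkeeping for uniqueness --- reducing the conjugacy relation degree by degree for a single map $\Phi=\Id+\Ucal$ and imposing $\pi_{\Ker d_0}(\Ucal_k)=0$ --- fills in correctly the point the paper leaves to the cited reference.
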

\begin{defi}
The normal form of $X$ obtained by conjugacy of the unique normalizing diffeomorphism $\Phi=\Id+\Ucal $
such that $\Ucal$ has a zero projection on the kernel of $d_0=[S,.]$, will be called {\bf the} normal form of $X$.
\end{defi}


\subsection{$\slt$-representations} 
\label{sec:machinery}

Throughout this section we work within the following setting.  We consider a 
representation $\{X,Y,H\}$ of $\slt$ acting on a finite dimensional vector space $\cV$, that is linear operators of $\cV$ and satisfy the relations \begin{align} [X,Y]=H,\qquad
[H,X]=2X,\qquad [H,Y]=-2Y.  \label{al:sl2relations} \end{align} Such a family $\{X,Y,H\}$ is also called an $\slt$-triple. The main definitions and propositions of this section can be found in \cite{Lie78-bourbaki, serre-semi}.

\begin{definition} 
Let $\{X,Y,H\}$ be an $\slt$-triple acting on a finite dimensional vector space $\cV$. A nonzero vector $b$ such that $Xb=0$ and $Hb=\lambda b$ is called a primitive vector. The corresponding eigenvalue $\la$ is called a weight.
\end{definition} 
We use the following lemma from $\slt$-representation theory: 
\begin{lemma}\cite{serre-semi}[IV-3] 
Let $\{X,Y,H\}$ be an $\slt$-triple acting on a vector space $\cV$. Let $b$ be a primitive vector. Let us define 
$$
e_m:=\frac{Y^mb}{m!},\quad  v_m:=Y^mb, \; m\geq 0
$$ 
and $e_{-1}:=v_{-1}:=0$.
The following properties hold:\\ 
\begin{minipage}[b]{0.5\linewidth} 
\begin{enumerate}
	\item $H(e_m)=\left( \lambda-2m \right)e_m$, 
	\item $Y(e_m)=\left( m+1\right)e_{m+1}$, \item $X(e_m)=\left( \lambda-m+1 \right)e_{m-1}$.  
\end{enumerate} 
\end{minipage}
\begin{minipage}[b]{0.5\linewidth} 
\begin{enumerate} 
	\item $H(v_m)=\left( \lambda-2m \right)v_m$, \item $Y(v_m)=v_{m+1}$,
	\item $X(v_m)=m\left( \lambda-m+1 \right)v_{m-1}$.  
\end{enumerate} 
\end{minipage} 
\label{lem:basicrepresentationlemma} 
\end{lemma}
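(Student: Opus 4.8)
\textbf{Proof proposal for Lemma~\ref{lem:basicrepresentationlemma}.} The plan is to verify the three identities on the $v_m$ first and then deduce the three identities on the $e_m=v_m/m!$ by rescaling. The defining relation $v_m=Y^mb$ immediately gives $Y(v_m)=v_{m+1}$, and since $Y(e_m)=Y^{m+1}b/m!=(m+1)\,e_{m+1}$ the second column of the left-hand block follows at once. For the weight identity I would argue by induction on $m$: the case $m=0$ is the hypothesis $Hb=\lambda b$; assuming $H(v_m)=(\lambda-2m)v_m$, I use the commutator relation $HY=YH-2Y$ (which is $[H,Y]=-2Y$) to compute $H(v_{m+1})=HY(v_m)=YH(v_m)-2Y(v_m)=(\lambda-2m)v_{m+1}-2v_{m+1}=(\lambda-2(m+1))v_{m+1}$. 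Dividing by $(m+1)!$ gives $H(e_{m+1})=(\lambda-2(m+1))e_{m+1}$, so the first identities in both columns hold for all $m\ge 0$, with the convention $v_{-1}=e_{-1}=0$ covering the degenerate index.

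The only identity requiring real work is the action of $X$. Here I would again induct on $m$, using $XY=YX+H$, i.e. $[X,Y]=H$. The base case $m=0$ is $X(v_0)=Xb=0=0\cdot(\lambda+1)v_{-1}$, which matches the claimed formula $X(v_m)=m(\lambda-m+1)v_{m-1}$ at $m=0$. For the inductive step, write $X(v_{m+1})=XY(v_m)=YX(v_m)+H(v_m)$; substituting the inductive hypothesis $X(v_m)=m(\lambda-m+1)v_{m-1}$ and the already-established $H(v_m)=(\lambda-2m)v_m$ gives $X(v_{m+1})=m(\lambda-m+1)\,Y(v_{m-1})+(\lambda-2m)v_m = \bigl(m(\lambda-m+1)+(\lambda-2m)\bigr)v_m$. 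A short simplification shows $m(\lambda-m+1)+(\lambda-2m)=(m+1)(\lambda-m)=(m+1)(\lambda-(m+1)+1)$, which is exactly the asserted coefficient at index $m+1$. This completes the induction for the $v_m$.

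Finally I would translate the $X$-identity to the $e_m$: from $X(v_m)=m(\lambda-m+1)v_{m-1}$ and $v_m=m!\,e_m$, $v_{m-1}=(m-1)!\,e_{m-1}$ one gets $m!\,X(e_m)=m(\lambda-m+1)(m-1)!\,e_{m-1}$, hence $X(e_m)=(\lambda-m+1)e_{m-1}$, which is the third identity in the left block. I expect no genuine obstacle here: the entire argument is a pair of parallel inductions on $m$ using the three $\slt$-relations \eqref{al:sl2relations}, and the most error-prone point is merely the bookkeeping of the scalar coefficient in the $X$-step, namely checking the algebraic identity $m(\lambda-m+1)+(\lambda-2m)=(m+1)(\lambda-m)$. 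Since this is a standard fact from $\slt$-representation theory, one could alternatively simply cite \cite{serre-semi}[IV-3] as the statement itself does.
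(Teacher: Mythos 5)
Your proof is correct and follows essentially the same route as the paper: an induction on $m$ using $[H,Y]=-2Y$ for the weight identity and $[X,Y]=H$ for the $X$-action, with the other family obtained by the rescaling $v_m=m!\,e_m$. The only difference is that you verify the formulas for $v_m$ first and then pass to $e_m$, whereas the paper works with $e_m$ and deduces the $v_m$ formulas, which is an immaterial change.
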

\begin{proof}

Let $x$ be an arbitrary vector of $\cV$ such that $Hx=\lambda x$; then 
\begin{align*} 
HYx=[H,Y]x+YHx=-2Yx+\lambda Yx=\left(\lambda-2 \right)Yx.  
\end{align*} 
As a consequence we obtain $He_m=\left( \lambda-2m \right)e_{m}$ and $Y(e_m)=\left( m+1 \right)e_{m+1}$ follows from the definition
of $e_m$.  Finally we have that 
\begin{align*} 
mXe_m&=XYe_{m-1}=[X,Y]e_{m-1}+YXe_{m-1}\\ &=He_{m-1}+\left( \lambda-m+2\right)Ye_{m-2}\\ &=m\left( \lambda-m+1 \right)e_{m-1}.  
\end{align*}
 The formulas for $v_m$ are immediately deduced from the
formulas for $e_m$.  
\end{proof}
\begin{corollary}
If $\cV$ is a finite dimensional vector space then the weight $\lambda$ is an integer.  
\end{corollary}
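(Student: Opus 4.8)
The statement to prove is the corollary: if $\cV$ is a finite dimensional vector space then the weight $\lambda$ is an integer.

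The plan is as follows. I want to use the structure from Lemma~\ref{lem:basicrepresentationlemma}. Given a primitive vector $b$ with $Hb = \lambda b$ and $Xb = 0$, I form the vectors $v_m = Y^m b$ for $m \geq 0$. Since $\cV$ is finite dimensional, these vectors cannot all be nonzero and linearly independent; from part (1) of the lemma, $H(v_m) = (\lambda - 2m) v_m$, so the nonzero $v_m$ are eigenvectors of $H$ with distinct eigenvalues $\lambda - 2m$, hence linearly independent. Therefore there must exist some smallest index $N \geq 0$ such that $v_N \neq 0$ but $v_{N+1} = Y^{N+1} b = 0$.

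Next I apply part (3) of the lemma for $v_m$: $X(v_{N+1}) = (N+1)(\lambda - N) v_N$. Since $v_{N+1} = 0$, the left side is zero, and since $v_N \neq 0$ and $N+1 \neq 0$, we conclude $\lambda - N = 0$, i.e. $\lambda = N$, a nonnegative integer. This completes the argument.

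I expect the main (minor) obstacle to be the justification that a first vanishing index $N$ exists — i.e. that $Y^m b = 0$ for some $m$. This follows because the nonzero $v_m$ have pairwise distinct $H$-eigenvalues $\lambda - 2m$ and so are linearly independent in the finite-dimensional space $\cV$; hence only finitely many can be nonzero, and since $v_0 = b \neq 0$, there is a well-defined largest $N$ with $v_N \neq 0$, and then $v_{N+1} = 0$ by maximality. Everything else is a direct substitution into the formulas already proved in Lemma~\ref{lem:basicrepresentationlemma}, so no further calculation is needed.
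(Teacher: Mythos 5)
Your proof is correct and is exactly the standard argument the paper intends: the nonzero $v_m=Y^m b$ are $H$-eigenvectors with distinct eigenvalues $\lambda-2m$, hence finitely many are nonzero, and evaluating $X(v_{N+1})=(N+1)(\lambda-N)v_N=0$ at the last nonzero index $N$ forces $\lambda=N\in\N$. This is precisely how the corollary follows from Lemma~\ref{lem:basicrepresentationlemma}, so there is nothing to add.
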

\begin{corollary}\label{irred-mudule}\cite{serre-semi}[chap IV, theorem 2 and 3]
 Let $e$ be a primitive vector and consider the vector subspace $W=Span\{e,Ye,Y^2e,\ldots\}$ of $\cV$; then $W$ is an irreducible
 $\slt$-module. Every $\slt$-module can be decomposed into a direct sum of irreducible $\slt$-module. 
\end{corollary}

Let us also assume that the vector space $\cV$ is provided with a scalar product $\inp{.,.}$ and an associated norm $\norm{.}$. Assume that the operators $X,Y$ are adjoint w.r.t. the given scalar product~: for each $v,w\in \cV$, we have $\inp{Xv,w}=\inp{v,Yw}$, $\inp{Yv,w}=\inp{v,Xw}$.
\begin{lemma} The operator $H=XY-YX$ is self-adjoint w.r.t. the scalar product.  
\label{lem:selfadjoint} 
\end{lemma}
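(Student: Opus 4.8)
The plan is to read off the adjoint $H^{*}$ directly from the hypothesis that $X$ and $Y$ are mutually adjoint. Recall that on a finite-dimensional inner product space every operator $A$ has a unique adjoint $A^{*}$, the assignment $A\mapsto A^{*}$ is conjugate-linear, reverses products ($(AB)^{*}=B^{*}A^{*}$) and fixes differences up to this rule ($(A-B)^{*}=A^{*}-B^{*}$). The two relations $\inp{Xv,w}=\inp{v,Yw}$ and $\inp{Yv,w}=\inp{v,Xw}$, valid for all $v,w\in\cV$, say precisely that $X^{*}=Y$ and $Y^{*}=X$.

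With this in hand the computation is a one-liner:
\[
H^{*}=(XY-YX)^{*}=(XY)^{*}-(YX)^{*}=Y^{*}X^{*}-X^{*}Y^{*}=XY-YX=H .
\]
If one prefers to avoid invoking the abstract product rule for adjoints, the defining identity can be verified by hand: for all $v,w\in\cV$,
\[
\inp{Hv,w}=\inp{XYv,w}-\inp{YXv,w}=\inp{Yv,Yw}-\inp{Xv,Xw}=\inp{v,XYw}-\inp{v,YXw}=\inp{v,Hw},
\]
where each middle equality merely moves one operator across the inner product using the mutual adjointness of $X$ and $Y$. Either way, $H$ is self-adjoint.

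There is essentially no obstacle here; the only points requiring a little care are that one must use \emph{both} relations $X^{*}=Y$ and $Y^{*}=X$ (not just one of them), and that the scalar product in play is the fixed one on $\cV$, not one of the degree-graded pieces. In the sequel this lemma is applied with $X=d_{0,\delta}^{*}$, $Y=d_{0,\delta}$, so that $H=[d_{0,\delta}^{*},d_{0,\delta}]$ is self-adjoint; this is what makes the spectral theory of the box operator $\square_{\delta}=d_{0,\delta}d_{0,\delta}^{*}$ (real nonnegative eigenvalues, orthogonal eigenbasis) available.
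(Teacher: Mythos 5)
Your proof is correct: the paper states Lemma \ref{lem:selfadjoint} without proof, and your computation (either $H^{*}=(XY-YX)^{*}=Y^{*}X^{*}-X^{*}Y^{*}=H$ or the direct verification $\inp{Hv,w}=\inp{Yv,Yw}-\inp{Xv,Xw}=\inp{v,Hw}$) is exactly the standard argument the authors leave implicit, using precisely their hypothesis that $X$ and $Y$ are mutually adjoint. Nothing is missing.
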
 
\begin{lemma} 
\label{lem:representation_inproduct} 
Let $e_0$, $f_0$ be two distinct primitive vectors of $\cV$ such that
$\inp{e_0,f_0}=0$. Then, the following properties hold: 
\begin{enumerate} 
	\item $\inp{Y^m e_0,Y^n e_0}=0$ for all $m,n\in \N$, $m\neq n$.  
	\item $\inp{Y^m e_0,Y^n f_0}=0$ for all $m,n\in \N$.  
\end{enumerate} 
\end{lemma}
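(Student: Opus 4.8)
The statement to prove is Lemma~\ref{lem:representation_inproduct}: given two distinct primitive vectors $e_0,f_0$ with $\inp{e_0,f_0}=0$, we want $\inp{Y^m e_0, Y^n e_0}=0$ for $m\neq n$ and $\inp{Y^m e_0, Y^n f_0}=0$ for all $m,n$. The plan is to exploit systematically the adjunction $\inp{Yv,w}=\inp{v,Xw}$ together with the action formulas of Lemma~\ref{lem:basicrepresentationlemma} on $v_m=Y^m e_0$, and similarly on $Y^n f_0$.

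\emph{Step 1 (orthogonality of distinct levels of the same string).} First I would observe that $\inp{Y^m e_0, Y^n e_0}=0$ whenever $m\neq n$ because $Y^m e_0$ and $Y^n e_0$ are eigenvectors of the self-adjoint operator $H$ (Lemma~\ref{lem:selfadjoint}) with distinct eigenvalues $\lambda-2m$ and $\lambda-2n$, where $\lambda$ is the weight of $e_0$ (here using that $H$ is self-adjoint, so eigenvectors for distinct eigenvalues are orthogonal). This is the quick argument; alternatively one can induct, pushing $Y$ across via adjunction: $\inp{Y^m e_0, Y^n e_0}=\inp{Y^{m-1}e_0, X Y^n e_0}=n(\lambda-n+1)\inp{Y^{m-1}e_0, Y^{n-1}e_0}$, and iterate until one index hits zero with $X e_0=0$.

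\emph{Step 2 (orthogonality of the two strings).} For the second assertion I would argue by induction on $m+n$. If $m=0$: $\inp{e_0, Y^n f_0}=\inp{X e_0, Y^{n-1}f_0}/\text{(something)}$—more precisely, push $Y$ off the right factor: $\inp{e_0, Y^n f_0}=\inp{X e_0, Y^{n-1} f_0}$ when $n\geq 1$ since $\inp{e_0, Y(Y^{n-1}f_0)}=\inp{X e_0, Y^{n-1}f_0}=0$ because $X e_0=0$; and the base case $\inp{e_0,f_0}=0$ is the hypothesis. Symmetrically $\inp{Y^m e_0, f_0}=0$. For general $m,n\geq 1$, write $\inp{Y^m e_0, Y^n f_0}=\inp{Y^{m-1}e_0, X Y^n f_0}=n(\mu-n+1)\inp{Y^{m-1}e_0, Y^{n-1}f_0}$ using item (iii) for $v_m$ in Lemma~\ref{lem:basicrepresentationlemma} applied to the string generated by $f_0$ (with $\mu$ its weight). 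The right-hand inner product has smaller total index $m+n-2$, so by the induction hypothesis it vanishes, completing the induction.

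\emph{Main obstacle.} There is no serious obstacle here: the proof is a two-line induction once one recognizes that adjunction converts $Y$ on one side into $X$ on the other, and that $X$ kills primitive vectors. The only point requiring a moment's care is bookkeeping the scalar factors $n(\lambda-n+1)$ (which may vanish, but that only helps) and making sure the induction is set up on $m+n$ rather than on $m$ or $n$ alone, so that the step always strictly decreases the induction parameter even when both indices are positive. I would present Step 1 via the self-adjointness of $H$ since it is the cleanest, and Step 2 via the $m+n$ induction.
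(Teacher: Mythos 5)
Your proposal is correct and follows essentially the same route as the paper: the second assertion is proved exactly as in the text, by pushing $Y$ across the scalar product via the adjunction $\inp{Yv,w}=\inp{v,Xw}$, using Lemma~\ref{lem:basicrepresentationlemma} to turn $XY$ into a scalar, and terminating with $Xe_0=Xf_0=0$. Your observation that the first assertion also follows immediately from the self-adjointness of $H$ (Lemma~\ref{lem:selfadjoint}) and the distinct eigenvalues $\lambda-2m\neq\lambda-2n$ is a clean equivalent shortcut to the paper's inductive argument, which you also reproduce.
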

\begin{proof} We show the first property by induction. We have  
 \begin{align*} 
  \inp{Ye_0,e_0}=\inp{e_0,Xe_0}=0 
 \end{align*}
 Let $n>m$, then it follows from Lemma \ref{lem:basicrepresentationlemma} that 
 \begin{align*}
  \inp{Y^ne_0,Y^me_0}=\inp{Y^{n-1}e_0,(XY)Y^{m-1}e_0}=C_m\inp{Y^{n-1}e_0,Y^{m-1}e_0},
 \end{align*} 
 for a certain constant $C_m$ that depends on $m$. Continuing this way concludes the first property.  We proceed with the second
 property.  Remark first that for all $n\geq 0$ 
 \begin{align*} 
  \inp{Y^ne_0,f_0}=\inp{Y^{n-1}e_0,Xf_0}=0. 
 \end{align*} 
 Let now $n\geq m$
 \begin{align*} 
  \inp{Y^ne_0,Y^mf_0}&=\inp{Y^{n-1}e_0,(XY)Y^{m-1}f_0}\\
 &=C_m\inp{Y^{n-1}e_0,Y^{m-1}f_0}=\ldots=C_mC_{m-1}\ldots C_1\inp{e_0,Y^{m-n}f_0}=0. 
 \end{align*} 
\end{proof}

\begin{corollary} 
  \label{cor:irredcomp} 
  Let $\{X,Y,H\}$ be an $\slt$-triple acting on a finite dimensional Hilbert space $\cV$. Assume that the operators $X,Y$ are adjoint w.r.t. the given scalar product. Then, there exists an orthogonal decomposition into irreducible $\slt$-submodules of $\cV$.  
\end{corollary}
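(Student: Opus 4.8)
\emph{Proof strategy.} The plan is to build the decomposition from an orthogonal basis of the space of primitive vectors and then let Lemma~\ref{lem:representation_inproduct} do the work. First I would observe that $P:=\Ker X$ is $H$-invariant: if $Xb=0$ then $X(Hb)=[X,H]b+HXb=-2Xb+0=0$. By Lemma~\ref{lem:selfadjoint}, $H$ is self-adjoint, hence so is its restriction $H|_P$; therefore $P$ admits an orthonormal basis $e_0^{(1)},\dots,e_0^{(s)}$ consisting of eigenvectors of $H$, i.e. of primitive vectors, and these span $P$.

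Next I would set $W_i:=\Span\{Y^m e_0^{(i)} : m\ge 0\}$ for $1\le i\le s$. Each $W_i$ is an irreducible $\slt$-submodule by Corollary~\ref{irred-mudule}. Since the $e_0^{(i)}$ are distinct and pairwise orthogonal primitive vectors, Lemma~\ref{lem:representation_inproduct}(2) gives $W_i\perp W_j$ for $i\ne j$, so $W:=W_1\oplus\cdots\oplus W_s$ is an orthogonal (in particular direct) sum of irreducible submodules; Lemma~\ref{lem:representation_inproduct}(1) additionally makes the spanning vectors mutually orthogonal inside each $W_i$, so after normalization one even obtains an orthonormal basis of $\cV$ adapted to the decomposition.

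It remains to see that $W=\cV$, and here the key point is that the orthogonal complement of an $\slt$-submodule is again an $\slt$-submodule: for $u$ in a submodule $U$ and $v\in U^\perp$ one has $\langle u,Xv\rangle=\langle Yu,v\rangle=0$, $\langle u,Yv\rangle=\langle Xu,v\rangle=0$, and $\langle u,Hv\rangle=\langle Hu,v\rangle=0$, using that $X,Y$ are mutually adjoint and $H$ self-adjoint. Thus, if $W\ne\cV$, then $W^\perp$ is a nonzero $\slt$-submodule; taking an $H$-eigenvector $e\in W^\perp$ of maximal eigenvalue (possible since $H|_{W^\perp}$ is self-adjoint) and using $[H,X]=2X$ forces $Xe=0$, so $e$ is a primitive vector. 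But then $e\in\Ker X=P\subseteq W$ and $e\in W^\perp$, whence $e=0$, a contradiction. Therefore $W=\cV$, which is the asserted orthogonal decomposition into irreducible $\slt$-submodules. I do not expect any genuine obstacle here: the only point that needs a little care is the bookkeeping that $\Ker X$ is $H$-stable and coincides with the linear span of the primitive vectors, after which everything reduces to the two orthogonality statements of Lemma~\ref{lem:representation_inproduct} and the elementary invariance of orthogonal complements under an adjoint pair of operators.
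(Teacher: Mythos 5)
Your proof is correct, and it takes a route that differs from the paper's in two respects worth noting. The paper first invokes the complete-reducibility statement of Corollary~\ref{irred-mudule} to write $\cV=\oplus W_i$ with primitive generators $b^{(i)}$, then diagonalizes the self-adjoint operator $H$ on the span $\tilde V$ of these generators and uses Lemma~\ref{lem:representation_inproduct} to conclude that the modules generated by the new orthonormal eigenvectors are pairwise orthogonal and still fill out $\cV$. You instead work directly with $P=\Ker X$: you check it is $H$-invariant, diagonalize $H|_P$ orthonormally, and use only the irreducibility statement of Corollary~\ref{irred-mudule} together with Lemma~\ref{lem:representation_inproduct}; crucially, you replace the appeal to abstract complete reducibility by the observation that the orthogonal complement of an $\slt$-submodule is again a submodule (via the adjointness of $X,Y$ and Lemma~\ref{lem:selfadjoint}) and a highest-weight argument producing a primitive vector in that complement, which must then lie in $P\subseteq W$ and hence vanish. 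What each approach buys: yours is more self-contained in the Hilbert-space setting (in effect you re-prove complete reducibility by orthogonal complements) and it makes fully explicit why the constructed submodules exhaust $\cV$, a point the paper's proof leaves implicit in the final identity $\cV=\oplus W_i=\oplus^\bot\tilde W_i$; the paper's version is shorter because it leans on the quoted structure theory. Both yield the same conclusion, and your exhaustion argument is airtight as written.
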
 
\begin{proof} 
According to lemma \ref{irred-mudule}, $\cV$ can be written as a direct sum of irreducible submodule $W_i$'s. Each of them is generated by a primitive element $b^{(i)}$ which is an eigenvector of $H$ and belongs to the kernel of $X$. Let $\tilde V$ be the vector space spanned by these $b^{(i)}$'s.
Since $[H,X]=2X$, the kernel of $X$ is left invariant by $H$~: if $Xf=0$ then $-X(Hf)=0$. In particular, $\tilde V$ is invariant by $H$. The operator $H_{|\tilde V}$ is self-adjoint. Let $\{\tilde b^{(0)},\ldots,\tilde b^{(j)}\}$ be an orthonormal basis of eigenvectors of $H_{|\tilde V}$. Each $\tilde b^{(l)}$ is a primitive element and it generates an irreducible $\slt$-module $\tilde W_l:=\Span\{Y^m(\tilde b^{(l)})|\,m\in \N\}$. According to lemma \ref{lem:representation_inproduct}, the $\tilde W_l$'s are pairwise orthogonal. Hence, 
$\cV=\oplus W_i=\oplus^\bot\tilde W_i$.
\end{proof}

We explain now how the norm acts on the individual irreducible representations $\Span\{Y^l(b)|\,l\in \N\}$. We have the following
lemma.  
\begin{lemma} Suppose that $b$ is a primitive element of $\cV$ and $Hb=\lambda b$ then for each $m\geq 0$ we have 
\begin{equation}\label{lem:normlemma} 
  \norm{Y^mb}^2=\frac{m!\lambda!}{(\lambda-m)!}\norm{b}^2.
\end{equation}
\end{lemma}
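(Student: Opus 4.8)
The plan is to compute $\norm{Y^m b}^2$ by peeling off one copy of $Y$ at a time and using that $X$ is the adjoint of $Y$ together with the commutation formula from Lemma~\ref{lem:basicrepresentationlemma}. Concretely, write $Y^m b = Y(Y^{m-1} b)$ and move one factor of $Y$ across the inner product:
$$
\norm{Y^m b}^2 = \inp{Y(Y^{m-1}b), Y^m b} = \inp{Y^{m-1}b, X Y^m b}.
$$
Since $b$ is primitive with $Hb = \lambda b$, the vector $v_{m} = Y^m b$ satisfies $X(v_m) = m(\lambda - m + 1) v_{m-1}$ by part~(3) of the $v_m$-formulas in Lemma~\ref{lem:basicrepresentationlemma}. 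Substituting this in gives
$$
\norm{Y^m b}^2 = m(\lambda - m + 1)\,\inp{Y^{m-1}b, Y^{m-1}b} = m(\lambda-m+1)\,\norm{Y^{m-1}b}^2 .
$$

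From here I would set up an induction on $m$. The base case $m = 0$ is $\norm{b}^2 = \norm{b}^2$, which is trivially the claimed formula since $\frac{0!\,\lambda!}{\lambda!} = 1$. For the inductive step, assuming $\norm{Y^{m-1}b}^2 = \frac{(m-1)!\,\lambda!}{(\lambda-m+1)!}\norm{b}^2$, the recursion yields
$$
\norm{Y^m b}^2 = m(\lambda-m+1)\cdot \frac{(m-1)!\,\lambda!}{(\lambda-m+1)!}\,\norm{b}^2 = \frac{m!\,\lambda!}{(\lambda-m)!}\,\norm{b}^2,
$$
using $(\lambda - m + 1)\cdot(\lambda-m)! = (\lambda-m+1)!$ and $m\cdot(m-1)! = m!$. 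This closes the induction and gives \eqref{lem:normlemma}.

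There is essentially no serious obstacle here; the only point deserving a word of care is the range of $m$ for which the statement is meaningful. By the corollary following Lemma~\ref{lem:basicrepresentationlemma} the weight $\lambda$ is a nonnegative integer (it is the top weight of the irreducible module generated by $b$), and $Y^m b = 0$ once $m > \lambda$, at which point both sides of \eqref{lem:normlemma} vanish (the factor $(\lambda - m + 1)$ in the recursion is zero when $m = \lambda + 1$, and the factorial expression is interpreted accordingly). For $0 \le m \le \lambda$ all quantities $(\lambda-m)!$ are genuine factorials and the argument above applies verbatim. Thus the formula holds for all $m \ge 0$.
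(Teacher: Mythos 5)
Your proof is correct and matches the paper's argument: both move one factor of $Y$ across the inner product via adjointness, apply $X(Y^m b)=m(\lambda-m+1)Y^{m-1}b$ from Lemma~\ref{lem:basicrepresentationlemma}, and close by induction on $m$. Your extra remark about the range $0\le m\le\lambda$ and the vanishing of both sides for $m>\lambda$ is a harmless clarification the paper leaves implicit.
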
 
\begin{proof} According to lemma~\ref{lem:basicrepresentationlemma}, we have that 
$$
\norm{Y^m b}^2=\inp{Y^{m}b,Y^m b}=\inp{Y^{m-1}b,XY^mb}=m\left( \lambda-m+1 \right)\inp{Y^{m-1}b,Y^{m-1}b}. 
$$
It follows, by induction, that 
$\norm{Y^mb}^2=\frac{m!\lambda!}{(\lambda -m)!}\norm{b}^2$.
\end{proof}

Following the proof of lemma \ref{cor:irredcomp}, let $\{b^{(0)},\ldots, b^{(j)}\}$ be a basis of orthogonal eigenvectors of the operator $H_{|\Ker(X)}$.
Let us define $b_m^{(i)}:=Y^m(b^{i})/\norm{Y^m(b^{(i)})}$ for each $i\in \N$, $0\leq i\leq j$.  Using the same argumentation as in the
proof of corollary~\ref{cor:irredcomp} it follows that 
\begin{align} 
  \label{al:orthbasis} 
  \{b_m^{(i)}|\, m\in \N, 0\leq i\leq j\}\setminus\{0\} 
\end{align} 
is a {\bf basis of orthonormal eigenvectors of $H$}. We study the action of $X$, $Y$, $H$, $XY$ and $YX$
  on this normed basis, as this will be convenient for later use.  
\begin{lemma} 
  \label{lem:basislemma} 
Let $b\in \Ker(X)$ be an eigenvector of $H$ associated with eigenvalue $\lambda$. Let us set $b_m:=Y^m(b)/\norm{Y^m(b)}$, $m\geq 0$. Then the vectors $b_m$ satisfy the following
  properties: 
  \begin{align} 
	  \nonumber 
	  H(b_m)&=(\lambda-2m)b_m,\\ 
	  \nonumber 
	  Y(b_m)&=\sqrt{(m+1)(\lambda-m)}b_{m+1},\\ 
	  \nonumber
	  X(b_m)&=\sqrt{m(\lambda-m+1)}b_{m-1},\\ 
	  	  YX(b_m)&=m(\lambda-m+1)b_{m},\label{eigenvalue-box}\\ 
	  XY(b_m)&=(m+1)(\lambda-m)b_{m}.\nonumber
  \end{align} 
\end{lemma}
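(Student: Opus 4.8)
The plan is to reduce everything to the unnormalized vectors $v_m=Y^m(b)$ from Lemma \ref{lem:basicrepresentationlemma} together with the norm formula (\ref{lem:normlemma}), and then simply divide out by the relevant norms. Write $\mu_m:=\norm{Y^m(b)}$, so that $b_m=v_m/\mu_m$ and, by (\ref{lem:normlemma}), $\mu_m^2=\frac{m!\,\lambda!}{(\lambda-m)!}\norm{b}^2$. The entire computation then hinges on the two elementary factorial ratios
$$
\frac{\mu_{m+1}^2}{\mu_m^2}=(m+1)\,\frac{(\lambda-m)!}{(\lambda-m-1)!}=(m+1)(\lambda-m),\qquad
\frac{\mu_{m-1}^2}{\mu_m^2}=\frac{1}{m}\,\frac{(\lambda-m)!}{(\lambda-m+1)!}=\frac{1}{m(\lambda-m+1)},
$$
both immediate from the closed form of $\mu_m^2$.

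Granting these, I would argue as follows. The identity $H(b_m)=(\lambda-2m)b_m$ follows at once by dividing $H(v_m)=(\lambda-2m)v_m$ by $\mu_m$. Since $Y(v_m)=v_{m+1}$, dividing by $\mu_m$ gives $Y(b_m)=(\mu_{m+1}/\mu_m)\,b_{m+1}=\sqrt{(m+1)(\lambda-m)}\,b_{m+1}$. Since $X(v_m)=m(\lambda-m+1)v_{m-1}$, dividing by $\mu_m$ gives $X(b_m)=m(\lambda-m+1)(\mu_{m-1}/\mu_m)\,b_{m-1}=\sqrt{m(\lambda-m+1)}\,b_{m-1}$ (the case $m=0$ being vacuous by the convention $b_{-1}=0$, both sides vanishing). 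Finally the formulas for $YX$ and $XY$ come by composing the ones just established: $YX(b_m)=\sqrt{m(\lambda-m+1)}\,Y(b_{m-1})=m(\lambda-m+1)\,b_m$, using the $Y$-formula with $m$ replaced by $m-1$, and $XY(b_m)=\sqrt{(m+1)(\lambda-m)}\,X(b_{m+1})=(m+1)(\lambda-m)\,b_m$, using the $X$-formula with $m$ replaced by $m+1$.

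I do not expect any real obstacle: all the representation-theoretic content is already packaged into Lemma \ref{lem:basicrepresentationlemma} and the norm formula (\ref{lem:normlemma}), so the only thing to watch is the bookkeeping with the factorial ratios and the edge conventions. As a built-in consistency check one may verify $XY(b_m)-YX(b_m)=\big((m+1)(\lambda-m)-m(\lambda-m+1)\big)b_m=(\lambda-2m)b_m=H(b_m)$, in agreement with $[X,Y]=H$.
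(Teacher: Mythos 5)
Your proof is correct and follows essentially the same route as the paper: both rely on the action formulas for $Y^m(b)$ from Lemma~\ref{lem:basicrepresentationlemma} together with the norm formula~(\ref{lem:normlemma}), divide by the norms $\norm{Y^m(b)}$, and obtain the $YX$ and $XY$ formulas by composing the $X$ and $Y$ formulas; you merely make the factorial ratios explicit where the paper writes the quotients of norms directly. The edge convention $b_{-1}=0$ and the $[X,Y]=H$ consistency check are fine additions but not substantive differences.
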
 
\begin{proof} We use Lemma~\ref{lem:normlemma} and compute: 
\begin{align*}
	  H(b_m)&=\frac{H(Y^m(b))}{\norm{Y^m(b)}}=(\lambda-2m)b_m,\\
	  Y(b_m)&=\frac{Y(Y^m(b))}{\norm{Y^m(b)}}=\frac{Y^{m+1}(b)}{\norm{Y^m(b)}}=
	  \frac{b_{m+1}\norm{Y^{m+1}(b)}}{\norm{Y^m(b)}}=\sqrt{(m+1)(\lambda-m)}b_{m+1},\\
	  X(b_m)&=\frac{X(Y^m(b))}{\norm{Y^m(b)}}=m(\lambda-m+1)\frac{Y^{m-1}(b)}{\norm{Y^m(b)}}\\
		&= m(\lambda-m+1)\frac{b_{m-1}\norm{Y^{m-1}(b)}}{\norm{Y^m(b)}}=\sqrt{m(\lambda-m+1)}b_{m-1},\\
	  YX(b_m)&=\sqrt{m(\lambda-m+1)}Yb_{m-1}=m(\lambda-m+1)b_{m},\\
	  XY(b_m)&=\sqrt{(m+1)(\lambda-m)}Xb_{m+1}=(m+1)(\lambda-m)b_{m}.  
\end{align*} 
\end{proof}
It is a classical fact that $XY$ is selfadjoint positive semi-definite such that $\cV=\Ker(XY)\oplus\Im(XY)=\Ker(Y)$, $\Im(XY)=\Im(X)$, $\Ker(XY)=\Ker(Y)$ (the same holds for $YX$ if we interchange the role of $X$ and $Y$).

\section{The normal form procedure : The Newton method}
\label{sec:normal_form_procedure}

We consider a germ of holomorphic vector field at the origin of $\Bbb C^n$, $V=N+R_{\geq 2}$, where $N$ is its (nilpotent) linear part and $R_{\geq 2}=\sum_{k\geq 2}R_k$, with $R_k\in \mathcal{V}_k$, is a nonlinear perturbation of $N$. In this section, we investigate the conjugacy of $V$ to a normal form. We assume that $\{N^*, N, D':=[N^*,N]\}$ (and, as a consequence $\{d_0^*, d_0, H':=[d_0^*,d_0]\}$) is an $\slt$-triple acting on each space of homogeneous polynomials $\cP_k$ (resp. $\cV_k$) , $k\geq 2$. 
Hence, we can apply the entire abstract $\slt$-theory developed in Section \ref{sec:machinery}.

Let us assume that, at step $m$, we start with a a germ of holomorphic vector field at the origin of the form $V_m=N+f_mN^*+R_{m+1}$. 
In this formula, $NF_m:=N+f_mN^*$ is a polynomial normal form of degree $m$, $R_{m+1}:=B+C$ where $B\in \mathcal{V}_{m+1,2m}$, $C\in \mathcal{V}_{>2m}$ and $f_m$ is a polynomial of degree $m-1$ vanishing at the origin 
 and which is an invariant of both $N$ and $N^*$, that is $N(f_m)=N^*(f_m)=0$. 

At each step $m$ of the procedure, we try to find a suitable polynomial coordinate transformation $\Phi^{-1}=\id+U$, where
$U\in \mathcal{P}_{m+1,2m}$ such that the diffeomorphism $\Phi^{-1}=\id+U$ normalizes $V_m$ up to order $2m$. This means that $\Phi^{-1}$conjugates $V_m$ to $V_{2m}:=\Phi_*V_m=N+f_mN^*+\widetilde{B}+\widetilde{C}$ where $\widetilde{B}\in \mathcal{V}_{m+1,2m}$ is a normal form and $\widetilde{C}\in\mathcal{V}_{>2m}$. Let us set 
$R_{2m+1}:=\widetilde{C}$.
The conjugacy equation reads~:
\begin{align*}
\left( \id+DU \right)\left( N+f_mN^*+\widetilde{B}+\widetilde{C} \right)
=\left( N+f_mN^*+B+C\right)\circ \left( \id+U \right).
\end{align*}
The previous equation becomes~:
\begin{eqnarray}
NF_m+\widetilde{B}+\widetilde{C} +DU.(NF_m)+DU.(\widetilde{B}+\widetilde{C})&=& NF_m+D(NF_m).U+B\label{al:conjugacyeq1}\\
&&+\left( B\circ(\id+U)-B \right)+C\circ \left( \id+U \right)\nonumber\\
&&+\left\{ NF_m\circ \left( \id+U \right) -NF_m-D(NF_m).U\right\}.\nonumber
\end{eqnarray}
On the other hand, we also have
\begin{eqnarray}
\widetilde{C}&=&NF_m(\id+U)-NF_m+\left( B+C \right)\circ \left( \id+U \right)\nonumber\\
& &-\widetilde{B}-DU.(NF_m+\widetilde{B}+\widetilde{C}).\label{al:conjugacyeq3}
\end{eqnarray}
Hence,
\begin{eqnarray}
\widetilde{C}&=&\int_0^1 D(NF_m).(y+tU(y))U(y)\,dt+\left( B+C \right)\circ \left( \id+U \right)\nonumber\\
&&-\widetilde{B}-DU.(NF_m+\widetilde{B}+\widetilde{C}).\label{al:conjugacyeq4}
\end{eqnarray}
This will be used to estimate $\widetilde{C}$ within the Newton process.
We rewrite equation $(\ref{al:conjugacyeq1})$ under the following form~:
\begin{eqnarray}
\widetilde{B}-B+[NF_m,U]+\widetilde{C} & = & \left( B\circ(\id+U)-B \right)+C\circ \left( \id+U \right)\nonumber\\
&&+\left\{ NF_m \circ \left( \id+U \right) -NF_m  -DNF_m.U\right\}\nonumber\\
&&- DU\left(\widetilde{B}+\widetilde{C}\right).
\label{al:conjugacyeq2}
\end{eqnarray}

Since $U\in \mathcal{V}_{m+1,2m}$, the Taylor expansion at the origin of the right hand side of equation~$(\ref{al:conjugacyeq2})$ contains only terms of order strictly larger than $2m$ at the origin. Therefore, we have
$$
J^{2m}\left(\widetilde{B}-B+[NF_m,U]\right)=0,
$$
where $J^{2m}$ denotes the truncation at degree $2m$ of the Taylor expansion at the origin. We are led naturally to define the operator
\begin{eqnarray*}
d_{0,m+1,2m}: \cV_{m+1,2m} & \rightarrow & \cV_{m+1,2m}\\
U & \mapsto & J^{2m}\left([NF_m, U]\right).
\end{eqnarray*}

Let us set $Z:=\pi_{\Im(\square)}B$ 
and $U:=d_0^*(V)$ for some $V\in \cV_{m+1,2m}$. We want to solve the {\bf iterated cohomological equation}
\begin{align}
\pi_{\Im(\square)}(J^{2m}([N+f_mN^*,d_0^*(V)]))=Z=\pi_{\Im(\square)}J^{2m}(R_{m+1}).
\label{al:cohomeq_solvable}
\end{align}

The part that we do not remove from the conjugacy equation $(\ref{al:conjugacyeq2})$ is then given by
\begin{align}
\tilde B:=\pi_{\Ker(\square)}B-\pi_{\Ker(\square)}(J^{2m}([N+f_mN^*,d_0^*(V)])).
\label{al:cohomeq_solvable2}
\end{align}

By assumption, we have that $\tilde B=g_{2m}N^*$ where $g_{2m}\in \cO_N^N\cap\cO_N^{N^*}$ is polynomial of degree $\leq 2m-1$. We define $NF_{2m}:=NF_m+\tilde B$.

The main result of this section is the estimate of the cohomological equation 
\begin{prop}\label{prop-estim-cohom}
There exists a positive constant $\eta$, independent of $m$ such that if  $1/2\leq r\leq 1$ and $\left|D(NF^m-N)\right|_r+\left|NF^m-N\right|_r<\eta$, for any $Z\in \pi_{\Im(\square)}(\cV_{m+1,2m})$, then the unique solution $U=d_0^*V\in\cV_{m+1,2m}$ of the ``iterated cohomological equation'' $(\ref{al:cohomeq_solvable})$~: 
$$\pi_{\Im(\square)}(d_{0,m+1,2m}(U))=Z$$ satisfies
\begin{equation}
|U|_r \leq 2md|Z|_r.
\end{equation}
\end{prop}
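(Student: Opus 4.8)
The plan is to reduce the iterated cohomological equation to a perturbation of the pure box operator $\boxtilde=\square_{|\Im(\square)}$ and to invert the perturbed operator by a Neumann series, controlling each piece by the $\slt$-machinery of Section~\ref{sec:machinery} and the analytic norm $|\cdot|_r$ of Section~\ref{section-norms}. First I would write $[NF_m,U]=[N,U]+[f_mN^*,U]=d_{0}(U)+[f_mN^*,U]$ and, since $U=d_0^*V$, observe $d_0 d_0^* = \square$. So the left side of \refeq{al:cohomeq_solvable} is
$$
\pi_{\Im(\square)}\bigl(J^{2m}(\square V)\bigr)+\pi_{\Im(\square)}\bigl(J^{2m}([f_mN^*,d_0^*V])\bigr).
$$
The term $[f_mN^*,d_0^*V]$ should be expanded, using $N^*(f_m)=0$ and the Leibniz rule for the Lie bracket with a function times a vector field, into a sum of terms of the shape $f_m\cdot(\text{bracket or derivative applied to }V)$ and $(\text{derivative of }f_m)\cdot V$; these are precisely the operators called $Q_1$ and $Q_2$ in \refeq{def-Q}--\refeq{def-P} in the sketch. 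Thus \refeq{al:cohomeq_solvable} becomes
$$
\bigl(\boxtilde + Q_1 + Q_2\bigr)\,V = Z
\quad\Longleftrightarrow\quad
V = \bigl(\id + \boxtilde^{-1}Q_1 + \boxtilde^{-1}Q_2\bigr)^{-1}\boxtilde^{-1}Z,
$$
which is the form announced as \refeq{al:mainoperatorequation}. Here one must be a little careful: $\boxtilde^{-1}$ is only defined on $\Im(\square)$, so I would first check that $Q_1,Q_2$ map into $\Im(\square)=\Im(d_0)$ after the projection $\pi_{\Im(\square)}$, or absorb that projection into the definition of $Q_i$; since we only ever apply $\pi_{\Im(\square)}$ on both sides this is harmless.

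Next I would estimate the three operators in the $|\cdot|_r$-norm for $1/2\le r\le 1$. For $\boxtilde^{-1}$: by Lemma~\ref{lem:basislemma}, in the orthonormal eigenbasis $\{b_m^{(i)}\}$ the operator $\square=d_0 d_0^*=YX$ (in the notation $X=d_0^*$, $Y=d_0$) acts diagonally with eigenvalues $m(\lambda-m+1)$ on each irreducible block, hence its nonzero eigenvalues are $\ge 1$; this is exactly the ``no small divisors'' fact, and it gives $\|\boxtilde^{-1}W_\delta\|_\delta \le \|W_\delta\|_\delta$ degree by degree, hence $|\boxtilde^{-1}W|_r\le |W|_r$ — the operator norm of $\boxtilde^{-1}$ is at most $1$ uniformly in $\delta$. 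For $Q_1,Q_2$: each is a composition of multiplication by $f_m$ (or by a first derivative $\partial f_m/\partial x_j$) with a fixed first-order operator ($d_0^*$, $d_0$, or $\partial_{x_j}$) applied to $V$. Using Lemma~\ref{prod-norm-r} (submultiplicativity of $|\cdot|_r$), Lemma~\ref{cauchy} (the Cauchy estimate $|\partial f/\partial x_i|_r\le \tfrac{2m}{r}|f|_r\le 4m|f|_r$ since $r\ge 1/2$ and $\deg f_m\le m-1$), and the boundedness of $d_0,d_0^*$ on each $\cV_\delta$ in the scalar-product norm, I get bounds of the shape $|Q_iV|_r \le c\,\bigl(|NF_m-N|_r+|D(NF_m-N)|_r\bigr)\,|V|_r$ for an absolute constant $c$. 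Choosing $\eta$ so that $c\,\eta \le 1/2$ makes $\|\boxtilde^{-1}Q_1+\boxtilde^{-1}Q_2\|\le 1/2$, so the Neumann series converges and $\|(\id+\boxtilde^{-1}Q_1+\boxtilde^{-1}Q_2)^{-1}\|\le 2$. This gives $|V|_r\le 2|Z|_r$, and then $U=d_0^*V$ with $d_0^*$ bounded on $\cV_{m+1,2m}$ — crucially by the $\slt$-formula $X(b_m)=\sqrt{m(\lambda-m+1)}\,b_{m-1}$, whose coefficient is bounded by $\sqrt{2m\cdot 2m}=2m$ on degrees $\le 2m$, up to passing through the factors $c_\delta=n^{\delta/2}$ in the definition of $|\cdot|_r$ which cancel in ratios of consecutive degrees. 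Combining, $|U|_r\le 2m\cdot d\cdot|Z|_r$ for the appropriate combinatorial constant $d$ coming from the operator norm of $d_0^*$.

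The main obstacle I anticipate is not the Neumann-series argument but the bookkeeping of the operator norms of $d_0$, $d_0^*$, and multiplication operators with respect to the specific norm $|\cdot|_r$ rather than the bare scalar-product norm $\|\cdot\|_\delta$: one must track how the weights $\lambda$ (which for vector fields on $\cV_\delta$ grow linearly in $\delta$) enter the eigenvalue formulas of Lemma~\ref{lem:basislemma}, verify that the only genuinely dangerous factor is the $\sqrt{m(\lambda-m+1)}=O(m)$ coming from $d_0^*$ (everything else being $O(1)$ uniformly in the degree), and confirm that the $c_\delta$ factors telescope correctly when a first-order operator shifts the degree by one — this is where Lemma~\ref{cauchy}'s proof technique (using $c_{\delta-1}\le c_\delta$) is reused. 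A secondary point requiring care is verifying that the restriction of $\square$ to $\Im(\square)$ genuinely has spectral gap $\ge 1$ on every degree $\delta\ge 2$ up to $2m$, which follows from the integrality of the weights (Corollary after Lemma~\ref{lem:basicrepresentationlemma}) together with the fact that on an irreducible module $\Span\{b_0,\dots,b_\lambda\}$ the eigenvalues of $YX$ are $m(\lambda-m+1)$ for $1\le m\le\lambda$, all $\ge\lambda\ge 1$. Once these estimates are in place the proposition follows by assembling them with the threshold choice of $\eta$.
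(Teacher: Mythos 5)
Your overall architecture matches the paper's: rewrite the equation as $V=(\id+Q_1+Q_2)^{-1}\boxtildeinv Z$ as in (\ref{al:mainoperatorequation}), bound $\boxtildeinv$ by $1$ using the eigenvalues $n(\lambda-n+1)\ge 1$ from Lemma \ref{lem:basislemma}, and pick up the single factor $md$ at the very end from $U=d_0^*V$ via Lemma \ref{cauchy}. The genuine gap is the claimed bound $|Q_iV|_r\le c\,(|NF_m-N|_r+|D(NF_m-N)|_r)\,|V|_r$ with $c$ an absolute constant: the ingredients you invoke (Lemma \ref{prod-norm-r}, Lemma \ref{cauchy}, and ``boundedness of $d_0,d_0^*$ on each $\cV_\delta$'') cannot deliver it, because the operator norm of $d_0^*$ on $\cV_\delta$ grows linearly in $\delta$. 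A naive composition estimate for $Q_1=\boxtildeinv\pi_{\Im(\square)}\bigl(f_m\,d_0^*d_0^*(\cdot)\bigr)$ therefore gives a bound of order $m^2\,|f_m|_r\,|V|_r$, and for $Q_2$ of order $m\,|\nabla f_m|_r\,|V|_r$, while $\boxtildeinv$ by itself only contributes a factor $\le 1$; with such bounds no choice of $\eta$ independent of $m$ makes the Neumann series ratio $\le 1/2$, and uniformity in $m$ is precisely the crux of the proposition (losing it would wreck Lemma \ref{small-div} and the whole Newton scheme). The paper gets the $m$-independent constants from a genuine cancellation between $\boxtildeinv$ and the unbounded factors: for $Q_1$, multiplication by $f_m$ commutes with $\square$ because $f_m\in\widehat\cO_n^N\cap\widehat\cO_n^{N^*}$, so $Q_1=f_m\tilde Q_1$ and $\tilde Q_1=\boxtildeinv\pi_{\Im(\square)}d_0^*d_0^*$ is computed on the $\slt$ ladder basis, where the ratio $\sqrt{(\lambda-n+2)(n-1)}\,\sqrt{(\lambda-n+1)n}\big/\bigl((n-2)(\lambda-n+3)\bigr)$ is $\le 6$ uniformly in $n$ and $\lambda$ (Lemma \ref{lem:boundofoperatorP1}); for $Q_2$, which is \emph{not} a ladder operator, the paper needs the invariant subspace of fields $AN+BN^*+CH$ (Lemma \ref{lem:invariantvectorspace}) and the explicit inversion of the resulting $3\times 3$ system (\ref{syst-equ}), whose coefficients $\alpha_n,\beta_n,\gamma_n$ are uniformly bounded (Lemma \ref{lem:boundofoperatorQ2}). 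None of this cancellation is carried out in your proposal, and acknowledging in the ``obstacles'' paragraph that the $O(m)$ factor from $d_0^*$ is dangerous does not resolve it.

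A second, related gap: the hypothesis controls $|NF^m-N|_r=|f_mN^*|_r$ and $|D(NF^m-N)|_r$, not $|f_m|_r$ or $|\nabla f_m|_r$, and one cannot simply ``divide by $N^*$'' in the norm; moreover your plan to bound $|\partial f_m/\partial x_j|_r\le 4m|f_m|_r$ by the Cauchy estimate reintroduces exactly the factor $m$ the argument cannot absorb. The paper instead proves, via the Belitskii adjointness of $\partial/\partial x_j$ and multiplication by $x_j$, that $|f_m|_r$ and each $|\partial f_m/\partial x_j|_r$ are bounded by a constant times $|NF^m-N|_r+|D(NF^m-N)|_r$ (estimates (\ref{estim-f}) and (\ref{estim-df})); this is the only place the derivative term in the hypothesis enters, and it is what keeps all constants in the $Q_i$ bounds independent of $m$. (A minor point in your favour: replacing the paper's nilpotency argument, Lemma \ref{lem:nilpotencylemma}, by a Neumann series with ratio $\le 1/2$ is legitimate on the finite-dimensional space $\Im(\square)$ — but only once the uniform operator bounds above are actually established.)
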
 
The next section is devoted to the proof of this proposition but we start with some preparations.
Recall that $\square=d_0d_0^*$ is a self-adjoint, positive semi-definite operator and that 
we have the decomposition $\mathcal{P}_{m+1,2m}=\Im(d_0)\oplus\Ker(d_0^*)=\Im(\square)\oplus\Ker(\square)$ together with $\Im(d_0)=\Im(\square)$ and $\Ker(d_0^*)=\Ker(\square)$.
As a consequence the operator
\begin{align*}
\boxtilde:\Im(\square)\rightarrow\Im(\square):V\mapsto \square(V),
\end{align*}
is an invertible operator. We define
\begin{align}
P_1~:&\,\mathcal{V}_{m+1,2m}\rightarrow\mathcal{V}_{m+1,2m};\qquad P_2~:\mathcal{V}_{m+1,2m}\rightarrow\mathcal{V}_{m+1,2m}\nonumber\\
&V\mapsto J^{2m}\left(f_md_0^*d_0^*(V)\right),\qquad V\mapsto J^{2m}\left(d_0^*(V)\left(f_m\right)N^*\right).\label{def-P}
\end{align}

We also define
\begin{align}
Q_1:=\boxtildeinv\pi_{\Im(\square)}P_1|_{\Im(\square)},\nonumber\\
Q_2:=\boxtildeinv\pi_{\Im(\square)}P_2|_{\Im(\square)}.\label{def-Q}
\end{align}
Hence, we have~:
\begin{align}
\pi_{\Im(\square)}J^{2m}([N+f_mN^*,d_0^*(V)] ) &=\pi_{\Im(\square)}\Big(\square(V)+J^{2m}(f_md_0^*d_0^*(V))-J^{2m}(d_0^*(V)\left(f_m\right)N^*)\Big)\nonumber\\
&=\square\pi_{\Im(\square)}\Big(\id+Q_1+Q_2\Big)(V)
\label{al:mainoperatorequation}
\end{align}
We consider the operator $\id +Q_1+Q_2$ as acting on $\Im(\Box)$. If it is invertible then 
the solution of equation $(\ref{al:cohomeq_solvable})$ is given by
\begin{equation}\label{equ-cohomo}
V=\left( \id +Q_1+Q_2 \right)^{-1}\boxtildeinv Z.
\end{equation}
We show in the next section that the operator $Q_1+Q_2$ is, in fact, nilpotent and we give a bound on this operator. This allows to invert and to estimate the solution of the equation since
it leads to:
\begin{align*}
\left( \id +Q_1+Q_2 \right)^{-1}=\sum_{l=0}^\alpha (Q_1+Q_2)^l.
\end{align*}
Although, the degree of nilpotency $\al$ depends on $m$,  \textbf{we essentially show, and this
is the key step, that $(\id+Q_1+Q_2)^{-1}$ can be bounded independently of the degree $m$.}
Such a sharp bound is needed to proof proposition \ref{prop-estim-cohom}.

\subsection{Estimate of the solution of the iterated cohomological equation}

This section is devoted to the proof of proposition \ref{prop-estim-cohom}.

We start by showing the nilpotency properties of several operators defined by $(\ref{def-P})$, $(\ref{def-Q})$ acting on $\cV_{m+1,2m}$.
\begin{lemma}
The operators $P_1,P_2,Q_1,Q_2, P_1+P_2,Q_1+Q_2$ are nilpotent. Moreover $P_1$ and $P_2$ commute pairwise.
\label{lem:nilpotencylemma}
\end{lemma}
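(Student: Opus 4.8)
The plan is to exploit the grading by degree and the fact that $f_m$ vanishes at the origin, so that multiplication by $f_m$ strictly raises the order of vanishing. I would introduce the filtration of $\mathcal{V}_{m+1,2m}$ by order of vanishing at the origin: let $F_j := \mathcal{V}_{j,2m}$ for $m+1 \le j \le 2m$, so that $F_{m+1} = \mathcal{V}_{m+1,2m} \supset F_{m+2} \supset \cdots \supset F_{2m} \supset F_{2m+1} = \{0\}$. The key observation is that each of the operators $P_1$ and $P_2$ maps $F_j$ into $F_{j+1}$, because applying $d_0^*=[N^*,\cdot]$ preserves degree, while multiplication by the polynomial $f_m$ (which has no constant term, since $f_m$ vanishes at the origin) raises the order of vanishing by at least one, and the truncation $J^{2m}$ does not decrease order. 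Concretely, for $V \in F_j$, the vector field $d_0^*d_0^*(V)$ still lies in $F_j$ (degree is unchanged by taking Lie brackets with the linear field $N^*$), and $f_m \cdot d_0^*d_0^*(V)$ lies in $F_{j+1}$; similarly $d_0^*(V)(f_m)$ — the derivative of $f_m$ along the vector field $d_0^*(V)$ — is a product of something of order $\ge j-1+\text{(order of }Df_m) \ge j$ hmm, more carefully: $d_0^*(V)(f_m) = \sum_k (d_0^*V)_k \partial_{x_k} f_m$, and since $f_m$ vanishes to order $\ge 2$, $\partial_{x_k}f_m$ vanishes to order $\ge 1$, while $(d_0^*V)_k$ vanishes to order $\ge j-1$... actually $d_0^*(V)$ has components in $\mathcal{P}_{\ge j}$ since $V \in F_j$, wait $V$ is a vector field whose components are polynomials of degree $\ge j$, and $d_0^*$ preserves this, so $(d_0^*V)_k$ has degree $\ge j$, hence $(d_0^*V)_k \partial_{x_k}f_m$ has order $\ge j+1$, so $d_0^*(V)(f_m)N^*$ has order $\ge j+1$ as well. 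Thus $P_2(F_j) \subseteq F_{j+1}$, and likewise $P_1(F_j) \subseteq F_{j+1}$.

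From this it follows immediately that $P_1^{m}=0$ and $P_2^{m}=0$ on $\mathcal{V}_{m+1,2m}$, and more generally that any composition of $m$ of the operators $P_1,P_2$ vanishes, since each factor drops us one step down the filtration and the filtration has length $m$. This gives nilpotency of $P_1$, $P_2$, and $P_1+P_2$ at once (indeed $(P_1+P_2)^m=0$, since every term in the binomial-type expansion is a product of $m$ factors each equal to $P_1$ or $P_2$, hence lands in $F_{2m+1}=\{0\}$). For the $Q_i$, I recall from $(\ref{def-Q})$ that $Q_i = \boxtildeinv \pi_{\Im(\square)} P_i|_{\Im(\square)}$; the operators $\boxtildeinv$ and $\pi_{\Im(\square)}$ are degree-preserving (the box operator $\square = d_0 d_0^*$ preserves each $\mathcal{V}_k$, hence so does its inverse on $\Im(\square)$ and the orthogonal projection onto $\Im(\square)$), so $Q_i$ also maps $F_j \cap \Im(\square)$ into $F_{j+1}\cap\Im(\square)$. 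Hence $Q_1$, $Q_2$, and $Q_1+Q_2$ are nilpotent on $\Im(\square)\subseteq\mathcal{V}_{m+1,2m}$ by the same filtration argument.

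For the final assertion that $P_1$ and $P_2$ commute, I would compute $P_1P_2$ and $P_2P_1$ on a test vector field $V$ and compare. Both are, up to the truncation $J^{2m}$, given by applying $f_m$-multiplications and $d_0^*$-operations in sequence; the point is that $f_m$ is a common first integral of $N$ and $N^*$, i.e. $N^*(f_m)=0$, so $d_0^*$ (bracket with $N^*$) commutes with multiplication by $f_m$ as operators on polynomials: $[N^*, f_m W] = N^*(f_m) W + f_m[N^*,W] = f_m [N^*,W]$. Using this repeatedly one checks that $P_1 P_2 V = J^{2m}(f_m^2 \, d_0^*(d_0^* d_0^* V)(\cdot)\ldots)$ — I would carefully track that both $P_1P_2V$ and $P_2P_1V$ equal $J^{2m}\big(f_m\,(d_0^*(V)(f_m))\,d_0^*d_0^*(N^*) + \cdots\big)$; the care needed is that $J^{2m}$ composed with $J^{2m}$ equals $J^{2m}$ on the relevant range (a standard fact for truncations), so the intermediate truncations may be dropped, and then commutativity reduces to the operator identity $[N^*, f_m\,\cdot\,] = f_m[N^*,\cdot]$. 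The main obstacle is bookkeeping: correctly expanding the two compositions of $P_1$ and $P_2$ — each of which itself is a composite of a derivation, a multiplication, and a truncation — and verifying that the truncations can be harmlessly commuted/absorbed and that the only nontrivial commutation needed is the one guaranteed by $N^*(f_m)=0$. Everything else is a direct consequence of the degree/order filtration.
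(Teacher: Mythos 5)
Your filtration-by-order argument for nilpotency is essentially the paper's own (the paper orders the orthonormal basis (\ref{al:orthbasis}) by increasing degree and notes that $P_1,P_2$ are strictly upper triangular while $\square$ is diagonal, so the $Q_i$ are strictly upper triangular too), and your identification of $N^*(f_m)=0$ as the engine of commutativity is also the paper's route. Two points need tightening, though. First, your claim that $P_2$ strictly raises the order --- on which your nilpotency of $P_2$, $Q_2$ and of the sums rests --- uses the unproved assertion that $f_m$ vanishes to order $\geq 2$. The setup only grants that $f_m$ vanishes at the origin; if $f_m$ had a nonzero linear part, then $d_0^*(V)(f_m)$ would in general only have order $\geq j$, and your filtration step for $P_2$ would fail. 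The assertion is in fact true: a linear form annihilated by both $N$ and $N^*$ would span a trivial $\slt$-submodule of $\cP_1$, which cannot exist since $N$ is regular (all Jordan blocks of size $\geq 2$); but you must either say this or bypass it as the paper does, by computing directly $P_2^2(V)=J^{2m}\bigl(N^*\bigl(d_0^*(V)(f_m)\bigr)\,N^*(f_m)\,N^*\bigr)=0$, which uses only $N^*(f_m)=0$ and no order count.

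Second, the commutation of $P_1$ and $P_2$ is left as ``bookkeeping'' and your displayed candidate for the common value, involving $d_0^*d_0^*(N^*)$, is not correct (note $d_0^*(N^*)=[N^*,N^*]=0$). Your plan does work: drop the inner truncations (legitimate since the operators never lower the degree), and use, besides $[N^*,f_mW]=f_m[N^*,W]$, the two further Leibniz consequences of $N^*(f_m)=0$, namely $d_0^*(gN^*)=N^*(g)N^*$ and $(d_0^*W)(f_m)=N^*(W(f_m))$. Then both compositions reduce to $P_1P_2(V)=P_2P_1(V)=J^{2m}\bigl(f_m\,N^*N^*N^*(V(f_m))\,N^*\bigr)$, which is exactly the paper's computation. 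Once this is written out, your observation that commutativity is not even needed for the nilpotency of $P_1+P_2$ and $Q_1+Q_2$ (the filtration handles the sums directly) is a mild simplification over the paper, which instead combines commutativity of $P_1,P_2$ with $P_2^2=0$ to conclude $(P_1+P_2)^{N+1}=0$.
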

\begin{proof}
Since $f_m$ vanishes at the origin, the multiplication by $f_m$ increases the order by $1$. Therefore, the operator $P_1$ is nilpotent. 
The operator $P_2$ is nilpotent too. In fact, since $f_m\in \cO^N_n\cap \cO^{N^*}_n$, we have 
\begin{align*}
P_2(P_2(V))=P_2(d_0^*(V)\left(f_m\right)N^*)&=J^{2m}\left(d_0^*(d_0^*(V)\left(f_m\right)N^*)\left(f_m\right)N^*\right)\\
&=J^{2m}\left(N^*\Big(d_0^*(V)\left(f_m\right)\Big) N^*\left(f_m\right)N^*\right)=0.
\end{align*}
To prove that $P_1$ and $P_2$ commute with each other is a little bit more intricate.
We first compute
\begin{align*}
  P_1P_2(V)&=J^{2m}\left(f_md_0^*d_0^*\left(d_0^*(V)( f_m )N^*\right)\right)=J^{2m}\left(f_mN^*N^*\left( d_0^*(V)( f_m ) \right)N^*\right)\\
&=J^{2m}\left(f_mN^*N^*N^*(V)\left( f_m \right) N^*\right).
\end{align*}
On the other hand, we have
\begin{align*}
P_2P_1(V)&=J^{2m}\left(d_0^*\left(f_md_0^*d_0^*(V)\right)\left(f_m\right)N^*\right)=J^{2m}\left(f_md_0^*d_0^*d_0^*(V)\left( f_m \right)N^*\right)\\
&=J^{2m}\left(f_mN^*d_0^*d_0^*(V)\left( f_m \right)N^*\right)=J^{2m}\left(f_mN^*(N^*(N^*(V)\left( f_m \right)))N^*\right).
\end{align*}
This proves that $P_1$ and $P_2$ are pairwise commuting. Since $P_1$ is nilpotent there exists a natural number $N>1$ (depending on $m$) such that $P_1^N=0$. Since $P_2^2=0$ then
$(P_1+P_2)^{N+1}=\sum_{k=1}^{N+1}\binom{N+1}{k} P_1^kP_2^{N+1-k}=0$.

Let us show that both $P_1$ and $P_2$ are upper triangular and $\square$ is diagonal with respect to a well-chosen basis. To this end it
suffices to write down the basis described in formula $(\ref{al:orthbasis})$ ordered by increasing degree. Since the operators
$P_1$ and $P_2$ increase the degree, they are automatically upper triangular in this basis. Furthermore it follows immediately
from Lemma~\ref{lem:basislemma} that $\square$ acts diagonally on this basis. One immediately deduces from these facts that the
operators $Q_1$, $Q_2$ and $Q_1+Q_2$ are upper triangular and hence nilpotent.
\end{proof}

We recall that $Y=d_0$, $X=d_0^*$, $H'=[d_0^*,d_0]=-H$ defines an $\slt$-triple acting on $\cV_{m+,2m}$.  
Following corollary \ref{cor:irredcomp}, we consider an orthonormal basis $\{b^{(1)},\ldots, b^{(k)}\}$ of primitive elements of $\cV_{m+1,2m}$ ($k$ depends on $m$) so that 
$$
\cV_{m+1,2m}=\bigoplus_{1\leq i\leq k}^{\bot}W_i
$$ 
is an orthogonal decomposition into irreducible $\slt$-submodules~: 
we have, for $1\leq i\leq k$, 
$$
W_i:=\text{Span}\left\{b_n^{(i)},\; n\in\Bbb N \right\}\quad \text{with }\; b_n^{(i)}=\frac{d_0^n(b^{(i)}_0)}{\norm{d_0^n(b^{(i)}_0)}}, n\in \N.
$$ 
The set 
\begin{equation}\label{basis}
\cB_{m+1,2m}:=\{b_n^{(i)}|\, 1\leq i\leq k,\, n\in \N\}\setminus \{0\}
\end{equation}
is an orthonormal basis of $\mathcal{V}_{m+1,2m}$ as immediately follows from Lemma~\ref{lem:representation_inproduct}.

The following lemma gives a bound for the operator $Q_1$.
\begin{lemma}
  The operator $Q_1$ is bounded by
  \begin{align}
	6\norm{f_m}.
	\label{al:formula}
  \end{align}
  
  \label{lem:boundofoperatorP1}
Furthermore, we have 
\begin{equation}\label{estim-Q1}
|Q_1(V)|_r\leq 6 |f_m|_r|V|_r.
\end{equation} 
\end{lemma}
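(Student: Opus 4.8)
The operator $Q_1 = \boxtildeinv\pi_{\Im(\square)}P_1|_{\Im(\square)}$ is built from multiplication by $f_m$ followed by two applications of $d_0^* = X$, then inverting $\square = YX$ on its image. The strategy is to work in the orthonormal basis $\cB_{m+1,2m}$ of formula $(\ref{basis})$, exploiting that $\square$ is diagonal there (with eigenvalues $(n+1)(\lambda-n)$ by $(\ref{eigenvalue-box})$, where $\lambda$ is the weight of the relevant irreducible component). The key structural observation is that on each $b_m^{(i)}$ one has $X(b_m) = \sqrt{m(\lambda-m+1)}\,b_{m-1}$, so $X^2$ lowers the index by $2$ and carries a factor $\sqrt{m(\lambda-m+1)(m-1)(\lambda-m+2)}$, whereas $\boxtildeinv$ acting on $b_{m-2}$ contributes $\frac{1}{(m-1)(\lambda-m+2)}$; these partially cancel. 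So the plan is: (1) fix an irreducible $\slt$-submodule and a basis vector $b_n$ (of some degree $\delta$), compute $X^2(f_m b_n)$ after decomposing $f_m b_n$ into its own irreducible components, (2) apply $\boxtildeinv$ componentwise, (3) bound the resulting coefficients using the explicit formulas in Lemma~\ref{lem:basislemma}, and (4) use $\norm{f_m b_n} \leq \norm{f_m}\norm{b_n} = \norm{f_m}$ (Banach algebra property of the scalar product) to conclude the operator-norm bound $(\ref{al:formula})$.

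The crucial point is controlling the combinatorial factor. Since $f_m$ is an invariant of both $N$ and $N^*$, i.e. $f_m \in \Ker X \cap \Ker Y$ as a multiplication operator's "symbol", multiplication by $f_m$ should map primitive vectors to primitive vectors and commute nicely with the $\slt$-action — more precisely, $f_m b^{(i)}$ is again annihilated by $X$ when $b^{(i)}$ is, because $X(f_m g) = N^*(f_m) g + f_m N^*(g) = f_m X(g)$ acting on functions, and similarly for the vector-field level bracket since $N^*(f_m)=0$. This means $f_m$ maps the irreducible component through $b_n^{(i)} = d_0^n b_0^{(i)}/\|\cdots\|$ into a combination living at comparable "depths" in the various modules, and the weights shift in a controlled way. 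Working this out, $X^2$ produces a coefficient like $\sqrt{(n+\text{const})(\lambda+\text{const})}$-type expressions, and after dividing by the $\square$-eigenvalue one is left with something bounded by a small absolute constant — the factor $6$ absorbing the worst case over the finitely many index shifts. I would track the exact shifts carefully (the degree increases by $\deg f_m$ under multiplication, while weight and index interact), and verify the bound holds uniformly in $m$, which is exactly why the $6$ is independent of $m$.

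For the second statement, the $r$-norm estimate $(\ref{estim-Q1})$: here I would \emph{not} go through the orthonormal basis but instead estimate directly. Writing $Q_1(V) = \boxtildeinv\pi_{\Im(\square)}J^{2m}(f_m d_0^* d_0^* V)$, I would use that $\boxtildeinv\pi_{\Im(\square)}$ has operator norm at most $1$ in the $r$-norm after one observes (via the diagonal structure and the fact that the smallest nonzero eigenvalue of $\square$ is $\geq 1$, together with the compatibility of $c_\delta = n^{\delta/2}$ across degrees) that it is a contraction; then apply the product estimate Lemma~\ref{prod-norm-r} to get $|f_m d_0^* d_0^* V|_r \leq |f_m|_r |d_0^* d_0^* V|_r$, and finally bound $|d_0^* d_0^* V|_r$ in terms of $|V|_r$ using that $d_0^* = [N^*,\cdot]$ is a first-order operator with constant-coefficient-times-linear symbol, hence a bounded operator on each $\cV_\delta$ with norm independent of $\delta$ — combined with the $\slt$-estimate $\|X(b_m)\|^2 = m(\lambda-m+1) \leq \delta^2$ giving control $|d_0^* d_0^* V|_r \leq C|V|_r$. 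The constant works out so that the product of all these bounds is $\leq 6$.

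\textbf{Main obstacle.} The hard part will be step (1)–(3): pinning down precisely how multiplication by the bi-invariant $f_m$ interacts with the decomposition into irreducible $\slt$-modules and the index shifts under $X^2$, and then showing the resulting ratio of square-root combinatorial factors to $\square$-eigenvalues is bounded by an absolute constant \emph{uniformly in} $m$ and in the choice of irreducible component. The subtlety is that $f_m b_n^{(i)}$ need not be a single basis vector but a sum over several components and several indices, and one must bound the worst-case combination; the uniformity in $m$ is the whole point of the lemma, so the estimate must genuinely avoid any degree-dependent loss. The $r$-norm part is comparatively routine once the contraction property of $\boxtildeinv\pi_{\Im(\square)}$ in the $r$-norm is established, which itself follows from the degree-by-degree diagonal structure.
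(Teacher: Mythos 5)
There is a genuine gap, in two places. First, for the operator bound: you stall exactly where the paper's proof is easy. Since $N(f_m)=N^*(f_m)=0$, multiplication by $f_m$ commutes not only with $d_0^*$ (which you notice) but also with $d_0$, hence with $\square$, with $\pi_{\Im(\square)}$ and with $\boxtildeinv$. Consequently $Q_1(V)=f_m\,\tilde Q_1(V)$ with $\tilde Q_1:=\boxtildeinv\pi_{\Im(\square)}d_0^*d_0^*|_{\Im(\square)}$, so one never has to decompose $f_mb_n^{(i)}$ into irreducible components at all: the ``main obstacle'' you flag (tracking index and weight shifts of $f_mb_n^{(i)}$) simply disappears, and what remains is the pure ladder computation
$\tilde Q_1(b_n^{(j)})=\frac{\sqrt{(\lambda-n+2)(n-1)}\sqrt{(\lambda-n+1)n}}{(n-2)(\lambda-n+3)}\,b_{n-2}^{(j)}$ for $n\ge 3$ (zero otherwise), whose coefficient is elementarily bounded by $6$, followed by $\|f_m W\|\le\|f_m\|\,\|W\|$. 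As written, your step (1)--(3) is left unresolved and the constant $6$ is merely asserted (``absorbing the worst case''), so the first estimate is not actually proved.

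Second, your route for the $r$-norm estimate is not just incomplete but would fail. You split $\boxtildeinv$ (a contraction, fine, since the nonzero eigenvalues $n(\lambda-n+1)$ of $\square$ are $\ge 1$) from $d_0^*d_0^*$, and claim $d_0^*$ is bounded on each $\cV_\delta$ with norm independent of $\delta$. That is false: your own formula $\|X(b_n)\|^2=n(\lambda-n+1)$ with $\lambda$ of order $\delta$ shows the norm of $d_0^*$ on $\cV_\delta$ grows like $\delta$ (compare $(\ref{estime-d_0^*})$, $|d_0^*V|_r\le md\,|V|_r$), so two applications cost $O(m^2)$ and your chain of inequalities yields at best $|Q_1(V)|_r\lesssim m^2|f_m|_r|V|_r$, not $6|f_m|_r|V|_r$. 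The degree-independent constant comes precisely from keeping $\boxtildeinv$ and $d_0^*d_0^*$ together, so that the $\square$-eigenvalues cancel the ladder coefficients; this sharpness is the entire point of the lemma, since any loss growing with $m$ would wreck the Newton iteration. The paper then deduces the $r$-estimate from the two facts that $\tilde Q_1$ preserves each homogeneous degree and is bounded by $6$ on each $\cV_k$, together with the product estimate of Lemma~\ref{prod-norm-r} applied only to the outer factor $f_m$.
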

\begin{proof}
We recall the definition of the operator 
\begin{align*}
Q_1(V)=\boxtildeinv\pi_{\Im(\square)}f_md_0^*d_0^*|_{\Im(\square)}(V).
\end{align*}
Since $\square=d_0d_0^*$ and since $f_m$ is an invariant of both $N$ and $N^*$, the multiplication operator by $f_m$ and the operator $\square$ commute pairwise. Therefore, we have
\begin{align*}
Q_1(V)=f_m\tilde Q_1(V),\quad\text{with }\; \tilde Q_1(V):=\boxtildeinv\pi_{\Im(\square)}d_0^*d_0^*|_{\Im(\square)}(V).
\end{align*}
We ``split'' the operator $\tilde Q_1$ into two parts: $d_0^*d_0^*$ and $\boxtildeinv$ and compute their actions using the decomposition into irreducible $\slt$-modules. In particular, we compute their actions on the basis $(\ref{basis})$. 
We use the theory developed in Section~\ref{sec:machinery} for that purpose.

According to lemma~\ref{lem:basislemma} and using the orthogonal basis $\cB_{m+1,2m}$, we obtain for each $j$~:
 \begin{align*}
   d_0^*d_0^*b_n^{(j)}&=X^2b_n^{(j)}=
   \sqrt{\left( \lambda-n+2 \right)\left( n-1 \right)}\sqrt{\left( \lambda-n+1\right)n}\,b_{n-2}^{(j)}.\\
\square b_n^{(j)}&=d_0d_0^*=YXb_n^{(j)}=n(\lambda-n+1)b_n^{(j)}. 
 \end{align*}
Here, $\lambda$ denotes the weight of the primitive element $b^{(j)}_0$. It follows that $Q_1(b_n^{(j)})=0$ for $0\leq n\leq 2$ and for $n\geq 3$ that:
\begin{align*}
  \tilde Q_1(b_n^{(j)})=
  \frac{\sqrt{\left( \lambda-n+2 \right)\left( n-1 \right)}\sqrt{\left( \lambda-n+1\right)n}}{(n-2)(\lambda-n+3)}b_{n-2}^{(j)}.
\end{align*}
On the other hand, we have 
$$
\frac{\sqrt{\left( \lambda-n+2 \right)\left( n-1 \right)}\sqrt{\left( \lambda-n+1\right)n}}{(n-2)(\lambda-n+3)}\leq 6.
$$
Let $V=\sum_{j=1}^k\sum_{n\geq 0}V_n^jb_n^{(j)}\in\mathcal{V}_{m+1,2m}^{(n)}$. According to the previous computations and estimates, we have
$$
\left\|\tilde Q_1(V)\right\|^2=  \left\|\sum_{j=1}^k\sum_{n\geq 0}V_n^j\tilde Q_1(b_n^{(j)})\right\|^2\leq \sum_{j=1}^k\sum_{n\geq 0}6^2|V_n^j|^2\|b_{n-2}^{(j)}\|^2\leq6^2\|V\|^2.
$$
Since $\tilde Q_1$ leaves each space of homogeneous vector fields invariant, we obtain as a consequence~:
\begin{eqnarray*}
\left|Q_1\left(V_{m+1}+\cdots+V_{2m}\right)\right|_r & \leq & |f_m|_r \left|\tilde Q_1\left(V_{m+1}+\cdots+V_{2m}\right)\right|_r\\
&\leq & |f_m|_r \sum_{k=m+1}^{2m}|\tilde Q_1(V_{k})|_r = |f_m|_r \sum_{k=m+1}^{2m}\|\tilde Q_1(V_{k})\|_kc_kr^k\\
&\leq & 6|f_m|_r \sum_{k=m+1}^{2m}\|V_{k}\|_kc_kr^k = 6|f_m|_r|V|_r. 
\end{eqnarray*}
This ends the proof of the lemma.
\end{proof}

We proceed with an estimate on the operator $Q_2$. 
Here the things tend to become a little more difficult since the operator $Q_2$ is not acting as a ``ladder'' operator as before. We can however make use of the following invariant subspace lemma~:
\begin{lemma}
The vector subspace of vector fields of the form $AN+BN^*+CH$, $A,B,C\in \mathcal{P}_{m+1,2m}$, is invariant under the action of $d_0,d_0^*$.
  \label{lem:invariantvectorspace}
\end{lemma}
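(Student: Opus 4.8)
The plan is to prove this by a single direct bracket computation, using two structural facts established earlier in the paper. First, by the proposition quoted from \cite{verstringe-nil}, the cohomological operators act on vector fields as $d_0=[N,\cdot]$ and $d_0^*=[N^*,\cdot]$, the Lie brackets with the \emph{linear} vector fields $N$ and $N^*$. Second, the triple $\{N^*,N,H\}$ satisfies the $\slt$-relations $(\ref{al:sl2relations})$ (as differential operators, equivalently as vector fields), so $\Span\{N,N^*,H\}$ is a Lie subalgebra; concretely
\begin{align*}
[N,N]=[N^*,N^*]=0,\qquad [N^*,N]=H,\qquad [H,N^*]=2N^*,\qquad [H,N]=-2N,
\end{align*}
hence also $[N,N^*]=-H$, $[N,H]=2N$ and $[N^*,H]=-2N^*$.

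Next I would record the Leibniz rule for the Lie bracket: for a polynomial $g$ and a vector field $W$,
\begin{align*}
[N,gW]=(Ng)\,W+g\,[N,W],\qquad [N^*,gW]=(N^*g)\,W+g\,[N^*,W],
\end{align*}
where $Ng$ and $N^*g$ denote the derivatives of the function $g$ along $N$ and $N^*$. Since $N$ and $N^*$ are linear, these derivations preserve the degree of homogeneous polynomials, so $Ng,\,N^*g\in\mathcal{P}_{m+1,2m}$ whenever $g\in\mathcal{P}_{m+1,2m}$.

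It then suffices to expand. Writing a general element of the subspace as $V=AN+BN^*+CH$ with $A,B,C\in\mathcal{P}_{m+1,2m}$, the two identities above give
\begin{align*}
d_0(V)&=[N,V]=\bigl(NA+2C\bigr)N+\bigl(NB\bigr)N^*+\bigl(NC-B\bigr)H,\\
d_0^*(V)&=[N^*,V]=\bigl(N^*A\bigr)N+\bigl(N^*B-2C\bigr)N^*+\bigl(A+N^*C\bigr)H.
\end{align*}
In both cases the output is again of the form $A'N+B'N^*+C'H$ with $A',B',C'\in\mathcal{P}_{m+1,2m}$, which is exactly the claimed invariance; and since the truncation $J^{2m}$ respects the grading of $\mathcal{V}$, the same holds after applying $J^{2m}$, as used in the estimate of $Q_2$. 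There is no genuine obstacle in this lemma — the computation is elementary — the only thing to watch is the sign conventions, in particular that $[N,N^*]=-H$ (not $+H$), since in the chosen $\slt$-triple it is $N^*$, not $N$, that plays the role of $X$.
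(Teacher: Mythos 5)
Your proof is correct and is essentially the paper's own argument: a Leibniz-rule expansion of $[N,\cdot]$ and $[N^*,\cdot]$ applied to $AN+BN^*+CH$ using the $\slt$ bracket relations, and your two displayed formulas coincide with the paper's $(\ref{al:invact1})$--$(\ref{al:invact2})$ up to the harmless relabeling $H\leftrightarrow H'=-H$, which does not change the subspace. Your added remark that $N$ and $N^*$ preserve polynomial degree, so the new coefficients remain in $\mathcal{P}_{m+1,2m}$, correctly makes explicit a point the paper leaves implicit.
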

\begin{proof}
  It is sufficient to show that this space is invariant by the action of $d_0$ and $d_0^*$. We recall that we consider the $\slt$-triple $\{N^*, N, H'\}$, with $H'=-H=[N^*, N]$.
  \begin{align}
	\nonumber
	[N,AN+BN^*+CH']&=N(A)N+N(B)N^*+N(C)H'+B[N,N^*]+C[N,H']\\
	&=\left( N(A)+2C \right)N+N(B)N^*+\left( N(C)-B \right)H'
	\label{al:invact1}
	\\
	\nonumber
	[N^*,AN+BN^*+CH']&=N^*(A)N+N^*(B)N^*+N^*(C)H'+A[N^*,N]+C[N^*,H']\\
	&=N^*(A)N+\left(N^*(B)-2C\right)N^*+\left( N^*(C)+A\right)H'.
	\label{al:invact2}
  \end{align}
\end{proof}

\begin{lemma}
Let $|||\nabla f_m|||=||\p{f_m}{x_1}||+\ldots+||\p{f_m}{x_n}||$.
  The operator $Q_2$ is bounded by $C_0|||\nabla f_m|||$, for some positive constant $C_0$, independent of $m$.
  \label{lem:boundofoperatorQ2}
Moreover, we have 
$$
|Q_2(V)|_r\leq C_0 \left(\left|\frac{\partial f_m}{\partial x_1}\right|_r+\cdots+\left|\frac{\partial f_m}{\partial x_n}\right|_r\right)\max(|N|_r,|N^*|_r,|H|_r)|V|_r
$$
\end{lemma}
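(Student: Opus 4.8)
The goal is to bound $Q_2 = \boxtildeinv \pi_{\Im(\square)} P_2|_{\Im(\square)}$, where $P_2(V) = J^{2m}(d_0^*(V)(f_m) N^*)$. The key structural input is Lemma~\ref{lem:invariantvectorspace}: the subspace $\mathcal{W}_{m+1,2m}$ of vector fields of the form $AN + BN^* + CH$ with $A,B,C \in \mathcal{P}_{m+1,2m}$ is stable under $d_0$ and $d_0^*$. The range of $P_2$ lies in the sub-subspace $\{B N^* : B \in \mathcal{P}_{m+1,2m}\}$ (it sits inside $\mathcal W_{m+1,2m}$ with $A = C = 0$), so $Q_2$ factors through $\mathcal{W}_{m+1,2m}$, and everything can be analyzed inside this finite-dimensional invariant subspace, which carries its own $\slt$-action by restriction.

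First I would set up the right basis. Since $N(f_m) = N^*(f_m) = 0$, the function $f_m$ is a common invariant, and $d_0^*(V)(f_m) = N^*(V)(f_m) = \nabla f_m \cdot (N^* V)$-type expression; more precisely $d_0^*(V)(f_m) = [N^*,V](f_m)$, and since $N^*(f_m)=0$ one gets $d_0^*(V)(f_m) = -N^*(V(f_m))\,$? — in any case, the operator $V \mapsto d_0^*(V)(f_m)$ is controlled by the partial derivatives $\partial f_m/\partial x_i$: writing $V = \sum_i V_i \partial_{x_i}$, the quantity $d_0^*(V)(f_m)$ is a polynomial combination of the $V_i$ and the $\partial_j f_m$, whose $\|\cdot\|$-norm (by the Banach-algebra property of the scalar product, Proposition~\cite{stolo-lombardi}[3.6-3.7], and the vector-field norm \refeq{eq:inpvv}) is bounded by $\big(\sum_i \|\partial_{x_i} f_m\|\big) \|V\|$ up to a universal factor accounting for the $N^*$-coefficients. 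Then $P_2(V) = J^{2m}(d_0^*(V)(f_m) N^*)$, so $\|P_2(V)\| \le \NORM{\nabla f_m}\, \|N^*\|_{\mathrm{op}}\, \|V\|$ up to a constant — i.e.\ $P_2$ is bounded in operator norm by $C \NORM{\nabla f_m} \max(|N|_r,|N^*|_r,|H|_r)$ for the relevant $r\in[1/2,1]$, where the $\max$ over $N,N^*,H$ appears because expanding $[N+f_mN^*, d_0^*(V)]$ and projecting involves all three ingredients of the triple.

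Next comes the role of $\boxtildeinv$. On $\Im(\square)$ one has $\square = d_0 d_0^* = YX$, and by Lemma~\ref{lem:basislemma} its eigenvalues on the normed basis \refeq{al:orthbasis} are $n(\lambda - n + 1)$ with $n \ge 1$, hence $\ge 1$ (they are positive integers). Therefore $\|\boxtildeinv\|_{\mathrm{op}} \le 1$ on $\Im(\square)$, and $\|\pi_{\Im(\square)}\|_{\mathrm{op}} = 1$. Combining, $\|Q_2\|_{\mathrm{op}} \le \|P_2\|_{\mathrm{op}} \le C_0 \NORM{\nabla f_m}$ with $C_0$ independent of $m$. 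This gives \eqref{al:formula}-style bound for $Q_2$. For the weighted estimate: since $P_2$, $\boxtilde$ and $\pi_{\Im(\square)}$ all preserve each homogeneous component $\mathcal{V}_k$ (the box operator and the projections are degree-preserving, and $J^{2m}$ only truncates), $Q_2$ is block-diagonal in degree. Hence, exactly as in the last display of the proof of Lemma~\ref{lem:boundofoperatorP1}, $|Q_2(V)|_r = \sum_{k=m+1}^{2m} \|Q_2(V_k)\|_k c_k r^k$, and applying the operator-norm bound componentwise together with Lemma~\ref{prod-norm-r} (to handle the multiplicative structure $d_0^*(V)(f_m)\cdot N^*$, bounding $|N^*|_r$ etc.) yields
$$
|Q_2(V)|_r \le C_0 \Big( \Big|\tfrac{\partial f_m}{\partial x_1}\Big|_r + \cdots + \Big|\tfrac{\partial f_m}{\partial x_n}\Big|_r \Big) \max(|N|_r, |N^*|_r, |H|_r)\, |V|_r .
$$

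**Main obstacle.** The delicate point is not the $\boxtildeinv$ bound — that is clean because the box eigenvalues are bounded below by $1$ — but rather showing that $P_2$ genuinely maps into the invariant subspace $\mathcal{W}_{m+1,2m}$ and extracting the operator norm of $P_2$ on it \emph{uniformly in $m$}. Unlike $Q_1$, the operator $Q_2$ is not a ladder operator: multiplication by $d_0^*(V)(f_m)$ mixes the irreducible $\slt$-components in a way that does not diagonalize against the basis \refeq{basis}. So the uniform bound must come purely from the scalar-product estimate $\NORM{d_0^*(V)(f_m) N^*} \lesssim \NORM{\nabla f_m}\NORM{N^*}\NORM{V}$ (Banach algebra property, plus Lemma~\ref{belitskii}-type adjoint relations to rewrite $d_0^*(V)(f_m)$) combined with $\|\boxtildeinv\| \le 1$ — there is no representation-theoretic "telescoping" to exploit, which is why the constant $C_0$ is less explicit than the $6$ of Lemma~\ref{lem:boundofoperatorP1}. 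Care is also needed that the coefficient of $N^*$ in $P_2(V)$ is a polynomial of degree $\le 2m$ so that the truncation $J^{2m}$ is harmless, and that the $\max(|N|_r,|N^*|_r,|H|_r)$ factor (with $N,N^*,H$ linear, so these norms are $\le C r$ and in particular bounded on $r \le 1$) correctly absorbs the linear-field contributions.
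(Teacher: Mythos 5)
There is a genuine gap, and it sits exactly at the point you declared unproblematic. You split $Q_2=\boxtildeinv\pi_{\Im(\square)}P_2$ and argue $\norm{\boxtildeinv}\leq 1$ (correct, since the nonzero eigenvalues $n(\lambda-n+1)$ of $\square$ are $\geq 1$) together with $\norm{P_2}\leq C_0\NORM{\nabla f_m}$ uniformly in $m$. The second estimate is not true: $P_2(V)=J^{2m}\bigl(d_0^*(V)(f_m)N^*\bigr)$ contains $d_0^*(V)=[N^*,V]$, which involves $DV$ (equivalently, in the orthonormal basis \refeq{basis}, $d_0^*$ multiplies the coefficient of $b_n^{(i)}$ by $\sqrt{a(n)}=\sqrt{n(\lambda-n+1)}$, and the weights $\lambda$ on $\cV_{m+1,2m}$ grow like $m$). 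This is precisely the paper's own estimate \refeq{estime-d_0^*}, $|d_0^*V|_r\leq md\,|V|_r$: the factor $m$ cannot be avoided for $d_0^*$ alone, so your route only yields $\norm{Q_2}\lesssim m\NORM{\nabla f_m}$. Such a bound would ruin the whole scheme, since the key point of Proposition \ref{prop-estim-cohom} is that $(\id+Q_1+Q_2)^{-1}$ is bounded \emph{independently of $m$} under a smallness condition on $f_m$ that does not shrink with $m$. Your parenthetical claim that ``there is no representation-theoretic telescoping to exploit'' is exactly backwards: the uniform constant $C_0$ exists only because of a cancellation between the growth $\sqrt{a(n+1)}$ coming from $d_0^*$ inside $P_2$ and the denominators produced by $\boxtildeinv$, and this cancellation is invisible if you bound the two factors separately.

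The paper's proof is built to capture that cancellation. It uses Lemma \ref{lem:invariantvectorspace} not merely to say where the range of $P_2$ lives, but to compute $\boxtildeinv$ explicitly on vectors of the form $B'N^*$ with $B'=d_0^*(V)(f_m)$: writing $\square$ on $\{AN+BN^*+CH'\}$ as the $3\times 3$ operator matrix \refeq{syst-equ} with $D=NN^*$, $K=D+2I$, solving \refeq{eq:A}--\refeq{eq:B} in the basis $w_n=b_n^{(i)}(f_m)$ (where $D$ and $K$ are diagonal with eigenvalues $a(n)+\mathrm{const}$), and observing that the resulting coefficients $\alpha_n,\beta_n,\gamma_n$ — ratios such as $\sqrt{a(n+1)}\big(1+\tfrac{2}{a(n-2)}\big)/\bigl(\sqrt{a(n)}\,a(n-1)\bigr)$ — are bounded uniformly in $n$, $\lambda$, hence in $m$. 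Your proposal never performs this inversion, so the decisive step is missing. (A secondary slip: $Q_2$ is not ``block-diagonal in degree''; multiplication by $f_m$, i.e.\ by $b_n^{(i)}(f_m)$, raises the degree, so $Q_2$ is only degree-increasing, which is why the paper passes to the $|\cdot|_r$ estimate through the explicit expansion of $A,B,C$ and Lemma \ref{prod-norm-r} rather than componentwise in degree.)
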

\begin{proof}
  We recall the definition of the operator $Q_2$.
  \begin{align*}
Q_2(V)=\boxtildeinv\pi_{\Im(\square)}d_0^*(V)\left(f_m\right)N^*.
\end{align*}
Lemma~\ref{lem:invariantvectorspace} above shows that the space of vector fields of the form $AN+BN^*+CH'$ is invariant under the action of the Lie
algebra generated by $d_0$ and $d_0^*$. We compute the action of $\square$ on this space and deduce the action of $\boxtildeinv$
for elements of the form $B'N^*:=d_0^*(V)(f_m)N^*$. Of course, we shall obtain conditions on $B'$ in order to solve the equation. We then use this expression to compute a bound.
Using equations (\ref{al:invact1}) and (\ref{al:invact2}), we deduce that 
\begin{eqnarray}
  \square(AN+BN^*+CH')&=&\left( NN^*(A)+2\left( N^*(C)+A \right) \right)N  +\left( NN^*(B)-2N(C) \right)N^*\nonumber\\
  &&+\left( NN^*(C)+2C+N(A)-N^*(B) \right)H'\nonumber\\
  &=:& A'N+B'N^*+C'H'.\label{equ-box}
\end{eqnarray}
Let $D:=NN^*$ be the operator acting on polynomials. Let us set $K:=D+2I$. In matrix notation, operator $\square$ thus acts as
\begin{align}
\left(\begin{array}{ccc}
D+2I&0&2N^*\\
0&D&-2N\\
N&-N^*&D+2I
\end{array}\right)
\left(\begin{array}{c}
A\\B\\C
\end{array}\right)
=
\left(\begin{array}{c}
A^\prime\\B^\prime\\C^\prime
\end{array}\right).\label{syst-equ}
\end{align}
We want compute the inverse of that operator for a vector of the type $B^\prime N^*=[N^*,V](f_m)N^*$, so we set $A^\prime=C^\prime=0$. 
Therefore, we have
\begin{equation}
KA= -2N^*(C).
\label{eq:A}
\end{equation}
Composing the third line by $Y=N$ and adding it to the second line leads to
\begin{equation}
N\left(D-2NK^{-1}N^*\right)(C)=B'.
\label{eq:C}
\end{equation}
Finally, we have
\begin{equation}
D(B)=B'+2N(C).
\label{eq:B}
\end{equation}

We first decompose $V$ along the basis $\cB_{m+1,2m}$ as defined by $(\ref{basis})$~: $V=\sum_{i,n} V_n^{(i)}b_n^{(i)}$. Hence, we have $[N^*, V](f_m)=\sum_{i,n} V_n^{(i)}d_0^*(b_n^{(i)})(f_m)$. This motivates the following notation~:
For each irreducible submodule $W_i$ of $\mathcal V_{m+1,2m}$ associated to the index $1\leq i\leq k$, 
let $\lambda$ be its weight (we omit to write $i$), we shall set for $0\leq n\leq \la$~:
$$
a(n):=n(\la-n+1).
$$
We shall use the convention that $a(n)=0$ and $b_n^{(i)}=0$, for $n\leq 0$.
Let us also set 
$$
v_n:=d_0^*(b_n^{(i)})\left( f_m \right), \quad w_n:=b_n^{(i)}(f_m).
$$
Since $f_m$ is an invariant of both $N$ and $N^*$ then 
\begin{eqnarray}
N(w_n)&=&[N,b_n^{(i)}](f_n)= \sqrt{(n+1)(\lambda-n)}b_{n+1}^{(i)}(f_m)=\sqrt{(n+1)(\lambda-n)}w_{n+1},\label{n-wn}\\ N^*(w_n)&=&[N^*,b_n^{(i)}](f_n)=\sqrt{n(\lambda-n+1)}b_{n-1}^{(i)}(f_m)=\sqrt{n(\lambda-n+1)}w_{n-1}.\label{n*-wn}\\
H(w_n) &=& N^*N-NN^*(w_n)= N^*(\sqrt{a(n+1)}w_{n+1})-N(\sqrt{a(n)}w_{n-1})\nonumber\\
&=& (\lambda-2n)w_n.\label{weight-wn}
\end{eqnarray}
Since the elements $w_n$ have distinct weights, those which are non-zero are linearly independent \cite{serre-semi}[proposition 1, p.18]. Furthermore, we have
$$
v_n=[N^*,b_n^{(i)}](f_m)=N^*(b_n^{(i)}(f_m))=N^*(w_n)=\sqrt{a(n)}w_{n-1}.
$$
Since we want to invert on elements which are orthogonal to the kernel of $d_0^*$, it is sufficient to consider $B'N^*$ as linear combination of element of the form the form $d_0^*(b_n^{(i)})\left( f_m \right)N^*$, $n\geq 2$. Indeed, we have $d_0^*(b_0^{(i)})=0$ (by definition) and $d_0^*(d_0^*(b_1^{(i)})\left( f_m \right)N^*)=0$. Let us write 
$$
B'=\sum_{i}\sum_n B^{'(i)}_n b_n^{(i)}(f_m)=\sum_{i}\sum_n B^{'(i)}_n w_n. 
$$
Let us set $V=\sum_{i}\sum_{n\geq 1} V_n^{(i)}b_n^{(i)}\in {\mathcal V}_{m+1,2m}$. Since $B'=[N^*,V](f_m)$ then, according to $(\ref{n*-wn})$, we have
\begin{equation}
B^{'(i)}_n= V_{n+1}^{(i)}\sqrt{a(n+1)}.
\label{eq:bv}
\end{equation}
We recall that $K=D+2I$. The operators $D$ and thus $K$ are both diagonal when expressed in the basis $\{w_n\}$. Let us explain this in detail and let us compute their eigenvalues. 
Indeed, using Lemma~\ref{lem:basislemma} we have~:
$$
Kw_n=\left( NN^*+2I \right)b_n^{(i)}(f_m)=[\left( d_0d_0^*+2I \right)b_n^{(i)}]\left( f_m \right)=\left(a(n)+2 \right)w_n.
$$
It follows that
$$
  K^{-1}w_n=\frac{w_n}{\left( a(n)+2 \right)},\quad D w_{n-1}  =  a(n-1) w_{n-1}.
$$
As a consequence, we have
\begin{eqnarray}
K^{-1}N^*w_{n-1} & = & K^{-1}\sqrt{a(n-1)}w_{n-2}= \frac{\sqrt{a(n-1)}}{a(n-2)+2}w_{n-2},\nonumber\\
NK^{-1}N^*w_{n-1} & = & \frac{\sqrt{a(n-1)}}{a(n-2)+2}Nw_{n-2}=\frac{a(n-1)}{a(n-2)+2}w_{n-1},\nonumber\\
(D-2NK^{-1}N^*)w_{n-1} &= & \frac{a(n-1)a(n-2)}{a(n-2)+2}w_{n-1},\nonumber\\
N(D-2NK^{-1}N^*)w_{n-1} &= & \frac{\sqrt{a(n)}a(n-1)a(n-2)}{a(n-2)+2}w_{n}.\label{sol-basis}\\
\end{eqnarray}
Therefore, according to $(\ref{sol-basis})$, if 
\begin{equation}
B_0'^{(i)}=B_1'^{(i)}=B_2'^{(i)}=0,\label{restriction-b'}
\end{equation} then equation $(\ref{eq:C})$ has a unique solution 
\begin{equation}
C_{n-1}^{(i)} := \frac{a(n-2)+2}{\sqrt{a(n)}a(n-1)a(n-2)}B_{n}^{'(i)},\quad n\geq 3
\label{eq:Cn}
\end{equation}
with $C_0^{(i)}=C_1^{(i)}=0$.
Since we have
$$
K^{-1}N^*w_n= \frac{a(n)}{a(n-1)+2}w_{n-1},
$$
then equation  $(\ref{eq:A})$ has a unique solution given by 
\begin{equation}
A_{n-1}^{(i)} := -2\frac{a(n)}{a(n-1)+2}C_{n}^{(i)}=\frac{-2}{\sqrt{a(n+1)}a(n-1)}B_{n+1}^{'(i)},\quad n\geq 2
\label{eq:An}
\end{equation}
and  $A_0^{(i)}=0$. Finally, we have for $n\geq 3$, 
\begin{eqnarray*}
B_n^{'(i)}w_n+2C_{n-1}^{(i)}N(w_{n-1}) & = & \left(B_n^{'(i)}+2\sqrt{a(n)}C_{n-1}^{(i)}\right)w_{n}\\
&=& \left(1+\frac{2(a(n-2)+2)}{a(n-1)a(n-2)}\right)B_n^{'(i)}w_{n},\\
B_n^{'(i)}w_n+2C_{n-1}^{(i)}N(w_{n-1}) & = & 0,\quad n\leq 2,\\
D^{-1}(B_n^{'(i)}w_n+2C_{n-1}^{(i)}N(w_{n-1})) &= & \frac{1}{a(n)}\left(1+\frac{2(a(n-2)+2)}{a(n-1)a(n-2)}\right)B_n^{'(i)}w_{n}, \quad n\geq 3.
\end{eqnarray*}
Therefore, the unique solution of $(\ref{eq:B})$ such that $B_0^{(i)}= B_1^{(i)}=B_2^{(i)}=0$ is given by
\begin{equation}
B_n^{(i)}=\frac{1}{a(n)}\left(1+\frac{2(a(n-2)+2)}{a(n-1)a(n-2)}\right)B_n^{'(i)}, \quad n\geq 3.
\label{eq:Bn}
\end{equation}
Since $\la$ is a positive integer , we have
$$
\la\leq a(n)\leq \left(\frac{\la+1}{2}\right)^2,\quad 1\leq n\leq \la. 
$$
Therefore, the numbers 
\begin{eqnarray*}
\al_n & := & \frac{-2\sqrt{a(n+2)}}{\sqrt{a(n+1)}a(n-1)},\quad n\geq 2\\
\be_n & : = & \frac{\sqrt{a(n+1)}}{a(n)}\left(1+\frac{2(1+\frac{2}{a(n-2)})}{a(n-1)}\right),\quad n\geq 3\\
\ga_n & := &   \frac{\sqrt{a(n+1)}(1+\frac{2}{a(n-2)})}{\sqrt{a(n)}a(n-1)},\quad n\geq 3,
\end{eqnarray*}
 are uniformly bounded with respect to $n$ and to $\la$. Hence, $\sup_n|\alpha_n|$, $\sup_n|\beta_n|$, $\sup_n|\gamma_n|$ are uniformly bounded with respect to $m$ and $i$ (the label of irreducible submodules of $\cV_{m+1,2m}$).
According to equations $(\ref{eq:bv})$, $(\ref{eq:An})$, $(\ref{eq:Bn})$ and $(\ref{eq:Cn})$, we deduce that
\begin{align*}
A&=\sum_{i}\sum_{n\geq 2}\alpha_n V_{n+2}^{(i)}b_{n-1}^{(i)}(f_m)\\
B&=\sum_{i}\sum_{n\geq 3}\beta_n V_{n+1}^{(i)}b_{n}^{(i)}(f_m)\\
C&=\sum_{i}\sum_{n\geq 3}\ga_n V_{n+1}^{(i)}b_{n-1}^{(i)}(f_m)
\end{align*}
solve equation $(\ref{equ-box})$ with $A'=C'=0$ and $B'=d_0^*(V)(f_m)$ with restrictions $(\ref{restriction-b'})$.

Let $X=\sum_{j=1}^n X_j\frac{\partial }{\partial x_j}$ be a polynomial vector field. We have $\|X_j\|\leq \|X\|$ since $\|X\|=\sum_{j=1}^n\|X_j\|^2$. As a consequence, we have
$$
\|X(f)\|=\left\|\sum_{j=1}^nX_j\frac{\partial f}{\partial x_j}\right\|\leq \sum_{j=1}^n\|X_j\|\left\|\frac{\partial f}{\partial x_j}\right\|\leq \|X\| \sum_{j=1}^n\left\|\frac{\partial f}{\partial x_j}\right\|\leq \|X\||||\nabla f|||.
$$
Let $V\in\cV_{m+1,2m}$ be any unit vector field. It can be written along the orthonormal basis $\cB_{m+1,2m}$ as $V:=\sum_{(n,i)}V_{n,i}b_n^{(i)}$, where $\sum_{(n,i)}V_{n,i}^2=1$.
We compute
\begin{align*}
\|Q_2(V)\|\leq &
\left\|\sum_{i}\sum_{n\geq 2}V_{n+2,i}\alpha_n b_{n-1}^{(i)}(f_m)N\right\|+
\left\|\sum_{i}\sum_{n\geq 3}V_{n+1,i}\beta_n b_{n}^{(i)}(f_m)N^*\right\|\\
&+\left\|\sum_{i}\sum_{n\geq 3}V_{n+1,i}\de_n b_{n-1}^{(i)}(f_m)H\right\|\\
\leq& 
|||\nabla f_m
|||.||N||.\sup_n{\alpha_n}||V||+
|||\nabla f_m
|||.||N^*||.\sup_n{\beta_n}||V||\\
&+|||\nabla f_m
|||.||N^*||.\sup_n{\delta_n}||V||\\
\leq & C_0|||\nabla f_m
|||.
\end{align*}
Here $C_0$ is independent of $m$.
We have hence shown that $||Q_2||\leq C_0|||\nabla f_m
|||$.
Let us prove the second inequality. We first have 
$$
\left|\sum_{(n,i)}V_{n+2,i}\alpha_n b_{n-1}^{(i)}(f_m)N\right|_r \leq  \left|\sum_{(n,i)}V_{n+2,i}\alpha_nb_{n-1}^{(i)}(f_m)\right|_r|N|_r.
$$
Let us consider the polynomial vector field $X:=\sum_{(n,i)}V_{n+2,i}\alpha_nb_{n-1}^{(i)}$. Let us show that 
$$
|X(f_m)|_r\leq |X|_r\left(\left|\frac{\partial f_m}{\partial x_1}\right|_r+\cdots+\left|\frac{\partial f_m}{\partial x_n}\right|_r\right).
$$
Indeed, according to lemma \ref{prod-norm-r}, we have 
$$
|X(f_m)|_r\leq \max_j|X_j|_r\left(\left|\frac{\partial f_m}{\partial x_1}\right|_r+\cdots+\left|\frac{\partial f_m}{\partial x_n}\right|_r\right).
$$
We have $|X_j|_r\leq |X|_r$ since $\|\{X_j\}_k\|\leq \|\{X\}_k\|$ where $\{X_j\}_k$ denotes the homogeneous component of degree $k$ of $X_j$. As a consequence, we have
\begin{eqnarray*}
\left|\sum_{(n,i)}V_{n+2,i}\alpha_n b_{n-1}^{(i)}(f_m)N\right|_r &\leq &  \left|\sum_{(n,i)}V_{n+2,i}\alpha_nb_{n-1}^{(i)}(f_m)\right|_r|N|_r\\
  &\leq & \left|\sum_{(n,i)}V_{n+2,i}\alpha_nb_{n-1}^{(i)}\right|_r\left(\left|\frac{\partial f_m}{\partial x_1}\right|_r+\cdots+\left|\frac{\partial f_m}{\partial x_n}\right|_r\right)|N|_r.
\end{eqnarray*}
For $m+1\leq k\leq 2m$, let $I_k$ be the set of index $i$ for which $b_n^{(i)}$ is homogeneous of degree $k$ (for all $n$). Then, we have
$$
\left|\sum_{(n,i)}V_{n+2,i}\alpha_nb_{n-1}^{(i)}\right|_r= \sum_{k=m+1}^{2m}\sum_{i\in I_k}\left\|\sum_{n}V_{n+2,i}\alpha_nb_{n-1}^{(i)}\right\|c_kr^k.
$$
Since the $\cB_{m+1,2m}$ is an orthonormal basis, we have
$$
\left\|\sum_{n}V_{n+2,i}\alpha_nb_{n-1}^{(i)}\right\|^2\leq \sum_{n}V_{n+2,i}^2\alpha_n^2\leq \max_n\alpha_n^2\sum_{n}V_{n,i}^2.
$$
Therefore, we have
$$
\sum_{k=m+1}^{2m}\sum_{i\in I_k}\left\|\sum_{n}V_{n+2,i}\alpha_nb_{n-1}^{(i)}\right\|c_kr^k\leq \max_n\alpha_n\sum_{k=m+1}^{2m}\sum_{i\in I_k}\sqrt{\sum_{n}V_{n,i}^2}c_kr^k = \max_n\alpha_n|V|_r.
$$
We obtain similar estimates for the other terms and we are done.
\end{proof}

We recall the cohomological equation $(\ref{equ-cohomo})$
\begin{equation*}
V=\left( \id +Q_1+Q_2 \right)^{-1}\boxtildeinv Z
\end{equation*}
where $Z$ belongs to $\pi_{\Im(\square)}(\cV_{m+1,2m})$.
According the previous lemmas, we have
\begin{equation}\label{estim-cohomo}
|V|_r\leq \left[\sum_{k= 0}^{\alpha(m)}\left( 6 |f_m|_r+C_0 |\nabla f_m|_r\max(|N|_r,|N^*|_r,|H|_r)\right)^k\right]|\boxtildeinv Z|_r
\end{equation}
where we have written
$$
|\nabla f|_r:=\left(\left|\frac{\partial f_m}{\partial x_1}\right|_r+\cdots+\left|\frac{\partial f_m}{\partial x_n}\right|_r\right).
$$
The aim of the remainder of the proof is to obtain an upper bound for the norm of the right hand side of $(\ref{estim-cohomo})$. By definition, we have, 
\begin{eqnarray*}
NF^m -N & = & f_{m}N^*=\sum_{i=1}^n{f_m \left(\sum_{j=1}^n \tilde s_{i,j}x_{j}\right)\frac{\partial}{\partial x_i}}
\end{eqnarray*}
Let us set $L_i(x):= \sum_{i=j}^n \tilde s_{i,j}x_{j}$ for $1\leq i\leq n$. We write $f$ for $f_m$ and we decompose $f=\sum_{k=1}^{m-1}f^{(k)}$ as a sum of homogeneous polynomial $f^{(k)}$ of degree $k$ . We have
\begin{eqnarray*}
\|f L_i(x)\|^2 & = & \left< L_i(x)f,L_i(x)f\right > = \sum_{k=1}^m\left< L_i(x)f^{(k)},L_i(x)f^{(k)} \right>\\
& = & \sum_{k=1}^m\frac{1}{(k+1)!}\left<f^{(k)},\sum_{j=1}^n \overline{\tilde s_{i,j}}\frac{\partial (L_if^{(k)})}{\partial x_j}\right>_B \\
& = & \sum_{k=1}^m\frac{1}{(k+1)!}\left<f^{(k)},\sum_{j=1}^n |\tilde s_{i,j}|^2 f^{(k)}+ L_i(x)\sum_{j=1}^n\overline{\tilde s_{i,j}}\frac{\partial f^{(k)}}{\partial x_j}\right>_B. \\
\end{eqnarray*}
As a consequence, we have
$$
\|f L_i(x)\|^2 =  \sum_{k=1}^m\frac{1}{(k+1)!}\left(\left(\sum_{j=1}^n |\tilde s_{i,j}|^2\right)\|f^{(k)}\|_B^2+\|L_i^*(\partial)f^{(k)}\|_B^2\right).
$$
where $L_i^*(\partial)(f_k)=\sum_{j=1}^n\overline{\tilde s_{i,j}}\frac{\partial f_k}{\partial x_j}$.
Hence, we have 
\begin{eqnarray*}
|fL_i|_r&=&\sum_{k=1}^{m-1} \left< L_i(x)f^{(k)},L_i(x)f^{(k)} \right>^{1/2}c_{k+1}r^{k+1}\\
&=& \sum_{k=1}^{m-1}\left[\frac{1}{(k+1)}\left(\left(\sum_{j=1}^n |\tilde s_{i,j}|^2\right)\|f^{(k)}\|^2+\|L_i^*(\partial)f^{(k)}\|^2\right)\right]^{1/2}c_{k+1}r^{k+1}\\
&\geq & r\sum_{k=1}^{m-1}\frac{n^{1/2}}{\sqrt{k+1}}\left(\sum_{j=1}^N |\tilde s_{i,j}|^2\right)^{1/2}\|f^{(k)}\|c_{k}r^{k}\\
&\geq & r\left(\sum_{j=1}^n |\tilde s_{i,j}|^2\right)^{1/2} |f|_r.
\end{eqnarray*}
Since $r\geq 1/2$ and since there exists an $i$ for which $\sum_{j=1}^n |\tilde s_{i,j}|^2\neq 0$, we finally obtain
\begin{equation}\label{estim-f}
|f_m|_r\leq \frac{2}{\sum_{j=1}^n |\tilde s_{i,j}|^2} |NF^m-N|_r.
\end{equation}
We have $\frac{\partial f_m L_i}{\partial x_j} = f_m\frac{\partial L_i}{\partial x_j}+L_i(x)\frac{\partial f_m }{\partial x_j}$. According the previous estimate applied to $\frac{\partial f }{\partial x_j}$ instead of $f$, there exists an $i$ such that, if $r\geq 1/2$, then 
$$
\left|\frac{\partial f_m }{\partial x_j}\right|_r\leq \frac{2}{\sum_{j=1}^n |\tilde s_{i,j}|^2} \left|\frac{\partial f_m L_i}{\partial x_j}-f_m\tilde s_{i,j}\right|_r.
$$
As a consequence, there exists a positive constant $c$ such that, for all $1\leq j\leq n$ and all $m\geq 2$
\begin{equation}\label{estim-df}
\left|\frac{\partial f_m }{\partial x_j}\right|_r\leq c\left(\left|D(NF^m-N)\right|_r+\left|NF^m-N\right|_r\right)
\end{equation}

Let us choose $\eta>0$ such that 
\begin{equation}\label{eta}
(6 +C_0 nc\max(|N|_1,|N^*|_1,|H|_1))\eta<1/2.
\end{equation} 
Using estimates $(\ref{estim-f})$ and $(\ref{estim-df})$ into $(\ref{estim-cohomo})$, we obtain that if $1/2\leq r\leq 1$ and $\left|D(NF^m-N)\right|_r+\left|NF^m-N\right|_r<\eta$, then
$$
|V|_r\leq 2|\boxtildeinv Z|_r.
$$
According to Lemma $\ref{cauchy}$, since $V$ is a polynomial vector field of degree $\leq 2m$ and $1/2\leq r$, we have
\begin{equation}\label{estime-d_0^*}
\left|d_0^*V\right|_r\leq |V|_r|DN^*|_r+|DV|_r|N^*|_r\leq md|V|_r
\end{equation}
for some positive constant $d$, independent of $m$. On the other hand, according to $(\ref{eigenvalue-box})$ we have~: 
$$
|\boxtildeinv Z|_r\leq |Z|_r
$$
As a consequence, if  $1/2\leq r\leq 1$ and $\left|D(NF^m-N)\right|_r+\left|NF^m-N\right|_r<\eta$, then the unique solution $U=d_0^*V$ of the cohomological equation $(\ref{al:cohomeq_solvable})$ satisfies
\begin{equation}\label{estime-sol-cohom}
|U|_r \leq 2md|Z|_r
\end{equation}
and we are done.
\section{The iteration procedure towards convergence}

We adapt the proof by the first author in \cite{Stolo-ihes}[section 8]. Let $1/2<r\leq 1$ and $\eta>0$ be a positive number that is small enough so that condition $(\ref{eta})$ is satisfied.
For any integer $m\geq \lfloor 8n/\eta\rfloor +1$. We define
\begin{align*}
 \mathcal{NF}_m(r)&=\Big\{X\in \mathcal{V}_{>0}|\, \max(|X-N|_r,|D(X-N)|_r)<\eta-\frac{8n}{m}\Big\},\\
\mathcal{B}_{m+1}(r)&=\Big\{X\in \mathcal{V}_{>m}|\,|X|_r<1\Big\}.
\end{align*}
Let $m=2^k$ for some integer $k\geq 1$ and define
\begin{align*}
 \rho=m^{-2/m}r \text{ and } R=\gamma_k m^{-4/m}r, \text{ where }\gamma_k=(2md)^{-1/m}.
\end{align*}
Since $m^{1/m}\geq 1$ it is readily verified that $\rho<R<r\leq 1$ for $m$ large enough, say $m\geq m_0$.

Let us go back to the framework of beginning of section \ref{sec:normal_form_procedure}. Suppose that we have already normalized our vector field up to order $m$. Our starting point is a vector field of the form
$NF_m+R_{m+1}$, where $NF_m=N+f_mN^*$ is the polynomial part of degree $m$ of the normal form and $R_{m+1}$ is an analytic germ of order $\geq m+1$ at the origin.
The following proposition will play the role of one step in the Newton process:
\begin{proposition}\label{induction}
 Assume that $NF_m\in \mathcal{NF}_m(r)$, $R_{m+1}\in \mathcal{B}_{m+1}(r)$. Suppose that $m$ is large enough, say $m\geq m_0$, $m_0$ independent of $r$, then the unique $U\in d_0^*(\mathcal{V}_{m+1,2m})$ solution of $(\ref{al:cohomeq_solvable})$ is such that:
\begin{enumerate}
 \item $\Phi:=(\id+U)^{-1}$ is a diffeomorphism such that $|\id+U|_R< \rho$,
\item $\Phi_*(NF_m+R_{m+1})=N_{2m}+R_{2m+1}$ is normalized up to order $2m$,
\item $NF_{2m}\in \mathcal{NF}_{2m}(R)$, $ R_{2m+1}\in \mathcal{B}_{2m+1}(R)$.
\end{enumerate}
\end{proposition}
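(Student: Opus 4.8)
The plan is to carry out one step of the Newton scheme and then control every norm, the decisive feature being that the order of the remainder doubles, from $m+1$ to $2m+1$: this produces a gain that is calibrated against the factor $2md$ coming from Proposition~\ref{prop-estim-cohom}. First decompose $R_{m+1}=B+C$ with $B:=J^{2m}(R_{m+1})\in\mathcal{V}_{m+1,2m}$ and $C:=R_{m+1}-B\in\mathcal{V}_{>2m}$, and set $Z:=\pi_{\Im(\square)}B$. Since $J^{2m}$ and the orthogonal projection $\pi_{\Im(\square)}$ are norm non-increasing for each $|\cdot|_s$, we get $|Z|_r\le|R_{m+1}|_r<1$. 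As $NF_m\in\mathcal{NF}_m(r)$, the smallness hypothesis of Proposition~\ref{prop-estim-cohom} holds, so there is a unique $U=d_0^*V\in\mathcal{V}_{m+1,2m}$ solving $(\ref{al:cohomeq_solvable})$ with $|U|_r\le 2md\,|Z|_r<2md$. Put, as in Section~\ref{sec:normal_form_procedure}, $NF_{2m}:=NF_m+\widetilde B$ with $\widetilde B$ as in $(\ref{al:cohomeq_solvable2})$, and let $R_{2m+1}:=\widetilde C$ solve $(\ref{al:conjugacyeq4})$; by the construction of that section, $\Phi_*(NF_m+R_{m+1})=NF_{2m}+R_{2m+1}$ is normalized up to order $2m$, which is (2). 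Since $U$ vanishes at the origin to order $\ge m+1\ge 2$, $\id+U$ is a germ of biholomorphism tangent to the identity, so $\Phi=(\id+U)^{-1}$ is well defined.

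For (1), I would exploit the valuation of $U$: since $R<r$, $|U|_R\le(R/r)^{m+1}|U|_r<(R/r)^{m+1}2md$, and inserting $R/r=\gamma_k m^{-4/m}=(2md)^{-1/m}m^{-4/m}$ converts the factor $2md$ into $|U|_R<\gamma_k\,m^{-4-4/m}$, which decays geometrically in $k$ (recall $m=2^k$). Since $\rho-R=m^{-4/m}(m^{2/m}-\gamma_k)r$ and $m^{2/m}-\gamma_k$ is of size $(\log m)/m$, one gets $|U|_R<\rho-R$ for all $m\ge m_0$, with $m_0$ independent of $r$ because only $r\in(1/2,1]$ is used. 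Hence $|\id+U|_R<\rho$, which is (1) and is precisely the inequality that makes the substitution lemma applicable below.

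For the normal-form part of (3), write $NF_{2m}-N=(NF_m-N)+\widetilde B$. Both $B$ and $J^{2m}([NF_m,U])$ have valuation $\ge m+1$, so, $\pi_{\Ker(\square)}$ and $J^{2m}$ being norm non-increasing, $|\widetilde B|_R\le|B|_R+|[NF_m,U]|_R$, and each summand carries the gain $(R/r)^{m+1}$; the Lie bracket costs one derivative, controlled by Lemma~\ref{cauchy} at the price of a polynomial-in-$m$ factor ($1/r\le2$, $|N|_r\le|N|_1$, $|NF_m-N|_r<\eta$ keeping all constants uniform). Since $(R/r)^{m+1}\cdot2md=\gamma_k m^{-4(m+1)/m}$, this gives $|\widetilde B|_R\to0$; and, $\widetilde B$ having degree $\le2m$, Lemma~\ref{cauchy} gives $|D\widetilde B|_R\le\tfrac{2mn}{R}|\widetilde B|_R\to0$ as well. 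In particular both are $<4n/m$ once $m\ge m_0$. As $R\le r$, $|NF_m-N|_R\le|NF_m-N|_r<\eta-8n/m$ and likewise for $D(NF_m-N)$; adding the previous bounds, $\max\big(|NF_{2m}-N|_R,|D(NF_{2m}-N)|_R\big)<\eta-8n/m+4n/m=\eta-8n/(2m)$, that is $NF_{2m}\in\mathcal{NF}_{2m}(R)$. The halving of the defect $8n/m$ into $8n/(2m)$ is exactly the slack left for $\widetilde B$, which is why the $8n$ is built into the definition of $\mathcal{NF}_m(r)$.

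For the remainder part of (3), estimate $\widetilde C=R_{2m+1}$ from the implicit identity $(\ref{al:conjugacyeq4})$. There $\widetilde C$ occurs only through $-DU\cdot\widetilde C$, and $|DU|_R\le\tfrac{2mn}{R}|U|_R\le 4n\,m^{-3}\le\tfrac12$ for $m\ge m_0$, so $\id+DU$ is boundedly invertible and $|\widetilde C|_R$ is at most twice the norm of the remaining terms. Those are small: by (1) and the substitution lemma $|R_{m+1}\circ(\id+U)|_R\le|R_{m+1}|_\rho\le(\rho/r)^{m+1}|R_{m+1}|_r<m^{-2(m+1)/m}$; the integral term is $\le c\,|U|_R$ for a constant $c$ depending only on $N$ and $\eta$ (again by (1) and its analogue for $\id+tU$, the substitution lemma applied to the polynomial matrix $D(NF_m-N)$, and $|D(NF_m-N)|_r<\eta$); and $|\widetilde B|_R$ and the $DU$-terms were already shown to be small. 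Hence $|R_{2m+1}|_R\to0$, so $|R_{2m+1}|_R<1$ for $m\ge m_0$, i.e. $R_{2m+1}\in\mathcal{B}_{2m+1}(R)$; throughout only $r\in(1/2,1]$ enters, so $m_0$ does not depend on $r$. The main obstacle is exactly this radius bookkeeping: one must check that the gain $(R/r)^{m+1}$ furnished by the order-doubling of the Newton step dominates both the factor $2md$ of Proposition~\ref{prop-estim-cohom} and the polynomial-in-$m$ losses from the Cauchy estimates, \emph{uniformly} in $r\in(1/2,1]$. This is exactly what the calibrated choice $R=\gamma_k m^{-4/m}r$, $\gamma_k=(2md)^{-1/m}$, achieves, and is what eventually forces the successive radii to converge to a positive limit (Lemma~\ref{small-div}).
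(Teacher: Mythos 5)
Your proposal is correct and implements the same Newton step with the same calibrated radii; the differences are in execution rather than strategy. Where the paper simply declares points (1), (2) and the bound $R_{2m+1}\in\mathcal{B}_{2m+1}(R)$ to be identical to \cite{Stolo-ihes}[proposition 8.0.2], quoting only the estimate $|U|_r\leq 2md\,|Z|_r=\gamma_k^{-m}|Z|_r$ from Proposition \ref{prop-estim-cohom}, you write these steps out: the valuation gain $(R/r)^{m+1}$ beating the loss $2md$, the use of the substitution lemma once (1) is secured, and the absorption of the implicit term $DU\cdot\widetilde C$ in $(\ref{al:conjugacyeq4})$ thanks to $|DU|_R=O(m^{-3})$. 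For the part the paper actually details, namely $NF_{2m}\in\mathcal{NF}_{2m}(R)$, the paper first uses $\pi_{\Ker(\square)}\bigl(J^{2m}([N,d_0^*(V)])\bigr)=\pi_{\Ker(\square)}(\square V)=0$ to reduce the estimate to the bracket $[f_mN^*,d_0^*(V)]$, which it then controls via $(\ref{estime-d_0^*})$ and $(\ref{estime-sol-cohom})$, getting bounds $C\eta/m^3$ and $C'\eta/m^2$; you instead keep the full bracket $[NF_m,U]$ and control it with Lemma \ref{cauchy} together with the factor $(R/r)^{m+1}$, which costs only an additional polynomial factor in $m$ and still yields $|\widetilde B|_R=O(m^{-3})$ and $|D\widetilde B|_R=O(m^{-2})$, comfortably below the slack $4n/m$; your bookkeeping $\eta-8n/m+4n/m=\eta-8n/(2m)$ is exactly the paper's, and your constants are uniform in $r\in(1/2,1]$ because $R$ stays bounded below. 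One caveat you share with the paper rather than introduce: with the norm $(\ref{def-normr})$ extended to maps via $(\ref{eq:inpvv})$ one has $|\id|_R=nR$ (since $c_1=\sqrt{n}$), so the inequality in point (1) must be read with the convention that the identity contributes only $R$ (as in \cite{Stolo-ihes}); your quantitative input $|U|_R<\rho-R$, with $\rho-R$ of order $(\log m)/m$ against $|U|_R=O(m^{-4})$, is the correct estimate under that convention and is precisely what makes the substitution lemma applicable.
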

\begin{proof}
According to the properties of the norm used (see section \ref{section-norms}) and the definition of the new remainder $(\ref{al:conjugacyeq4})$, the proof of the first point, second point and of the inequality $R_{2m+1}\in \mathcal{B}_{2m+1}(R)$ of the proposition is identical to the proof of \cite{Stolo-ihes}[proposition 8.0.2]. Indeed, we obtained, from proposition \ref{prop-estim-cohom}, the estimate $|U|_r\leq \gamma_k^{-m}$.

We need to proof $NF_{2m}\in \mathcal{NF}_{2m}(R)$. Its proof slightly differs from the equivalent one in \cite{Stolo-ihes}[proposition 8.0.2]. We have to estimate the new normal form $NF^{2m}=NF^m+\tilde B$ as defined in $(\ref{al:cohomeq_solvable2})$. This expression contains a Lie bracket that we need to estimate.
According to $(\ref{al:cohomeq_solvable2})$, we have 
$$
\tilde B=\pi_{\Ker(\square)}B-\pi_{\Ker(\square)}(J^{2m}([f_mN^*,d_0^*(V)])).
$$
Indeed, since $\pi_{\Ker(\square)}(d_0d_0^*(V))=0$ (the range and kernel of $\square$ are supplementary spaces), then $\pi_{\Ker(\square)}(J^{2m}([N,d_0^*(V)]))=0$.
Hence, we have the following estimate~:
$$
|\tilde B|_R\leq |B|_R+|[f_mN^*,d_0^*(V)]|_R.
$$
In order to estimate the right hand side, we write
$$
[f_mN^*,d_0^*(V)]=f_md_0^*(d^*(V))-d_0^*(V)(f_m)N^*.
$$
This leads to the following estimate~:
$$
|[f_mN^*,d_0^*(V)]|_R\leq |f_m|_R|d_0^*(d^*(V))|_R+|d_0^*(V)(f_m)|_R|N^*|_R.
$$
According to $(\ref{estime-d_0^*})$ and $(\ref{estime-sol-cohom})$, and since $B$ (resp. $f_m$) has order $\geq m+1$ (resp. $\geq 1$) at the origin, we have
\begin{eqnarray*}
|[f_mN^*,d_0^*(V)]|_R &\leq & |f_m|_R2(md)^2|B|_R+2md|B|_R|\nabla f_m|_R|N^*|_R\\
&\leq & \left(\frac{R}{r}\right)^{m+2}\left(2(md)^2|f_m|_r|B|_r+2md|B|_r|\nabla f_m|_r|N^*|_r\right)\\
&\leq & \left(\frac{1}{2md}\right)^{1+2/{m}}\frac{1}{m^{4(1+2/m)}}\left(2(md)^2|f_m|_r|B|_r+2md|B|_r|\nabla f_m|_r|N^*|_r\right)
\end{eqnarray*}
According to the assumption of proposition \ref{prop-estim-cohom} and to the fact that $|B|_r\leq |B+C|_r\leq 1$, then for $1/2\leq r\leq 1$, we have
$$
|[f_mN^*,d_0^*(V)]|_R\leq C\eta\frac{1}{m^{3}}
$$
for some positive constant $C$, independent of $m$. Furthermore, by lemma \ref{cauchy}, we have
$$
|D([f_mN^*,d_0^*(V)])|_R\leq C'\eta\frac{1}{m^{2}}
$$
for some constant $C'$ independent of $m$. This shows that $NF_{2m}\in \mathcal{NF}_{2m}(R)$ and we are done.
\end{proof}

Let $1/2<r \leq 1$ be a positive number and let us consider the sequence $\{R_k\}_{k\geq 0}$ 
of positive numbers defined by induction as follow~:
\begin{eqnarray*}
R_0 & = & r\\
R_{k+1} & = & \gamma_km^{-2/m}R_k\quad\text{where}\quad m=2^k
\end{eqnarray*}
\begin{lemma}\label{small-div}
The sequence $\{R_k\}_{k\geq 0}$ converges and there exists an integer $m_1$ such that 
for all integer $k>m_1$, $R_k>\frac{R_{m_1}}{2}$.
\end{lemma}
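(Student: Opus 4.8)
The plan is to take logarithms and reduce the convergence statement to the summability of an explicit numerical series. First I would write $R_k=r\prod_{j=0}^{k-1}a_j$ with $a_j:=\gamma_j\,(2^j)^{-2/2^j}$, and insert the definition $\gamma_j=(2^{j+1}d)^{-1/2^j}$ to obtain
$$
-\log a_j=\frac{\log(2^{j+1}d)+2j\log 2}{2^j}=\frac{(3j+1)\log 2+\log d}{2^j}.
$$
Since $\sum_{j\geq 0}j\,2^{-j}$ and $\sum_{j\geq 0}2^{-j}$ both converge, the series $\sum_{j\geq 0}\log a_j$ converges absolutely; hence $\log R_k=\log r+\sum_{j=0}^{k-1}\log a_j$ converges to a finite limit, and therefore $R_k\to R_\infty$ for some $R_\infty>0$. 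This settles the first assertion of the lemma.

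For the second assertion I would observe that $a_j<1$ precisely when $(3j+1)\log 2+\log d>0$, which holds for all $j\geq k_0$ for some integer $k_0$ depending only on $d$ (one may take $k_0=0$ when $d\geq 1$). Consequently the tail $(R_k)_{k\geq k_0}$ is strictly decreasing, and since it converges to $R_\infty$ it satisfies $R_k\geq R_\infty$ for every $k\geq k_0$. Because $R_k\to R_\infty<2R_\infty$, I can then fix an index $m_1\geq k_0$ with $R_{m_1}<2R_\infty$; for every $k>m_1$ this yields $R_k\geq R_\infty>\tfrac12 R_{m_1}$, which is exactly the claim.

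There is no genuine analytic obstacle here: the only point requiring (very mild) care is the absolute summability of $\sum_j\bigl((3j+1)\log 2+\log d\bigr)2^{-j}$, i.e.\ the fact that the factors $2^{k}d$ and $m^{-2/m}$ entering $\gamma_k$ and $m^{-2/m}$ grow, resp.\ decay, slowly enough on the scale $2^k=m$. It is worth emphasizing that this is precisely where the sharp, $m$-independent bound $|U|_r\leq 2md\,|Z|_r$ of Proposition~\ref{prop-estim-cohom} is used: it produces the contraction factor $\gamma_k=(2md)^{-1/m}$ at step $k$, whose logarithm is $O(k/2^k)$ and hence summable, whereas a cruder bound of the form $|U|_r\leq C^{k}|Z|_r$ would give a non-summable $\log\gamma_k$ and a sequence $R_k$ collapsing to $0$. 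One should also just check the boundary case $k=0$ ($m=1$, $\gamma_0=1/(2d)$, $a_0=1/(2d)$), which is immediate.
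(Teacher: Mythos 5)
Your proof is correct and follows essentially the same route as the paper: take logarithms, observe that $-\log a_j=\bigl((3j+1)\log 2+\log d\bigr)2^{-j}$ is summable so that $R_k$ converges to a positive limit, and then choose $m_1$ using the smallness of the tail (the paper picks $m_1$ so that the tail product exceeds $1/2$, while you use eventual monotonicity and $R_{m_1}<2R_\infty$, which is an equivalent reformulation). No gaps to report.
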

\begin{proof}
We recall that $\gamma_k=\left(2md\right)^{-1/2^k}$.
We have $R_{k+1}=r\prod_{i=1}^k\gamma_i(2^i)^{-2^{1-i}}$, thus 
$$
\ln R_{k+1} = -\sum_{i=1}^k{\frac{\ln 2^{i+1}d}{2^i}}
-\ln c_1\sum_{i=1}^k{\frac{1}{2^i}}-2\ln 2\sum_{i=1}^k{\frac{i}{2^i}}
$$
all the sums are convergent as $k$ tends to infinity. 
It follows that there exists an integer $m_1$, such that 
$$
\prod_{i=m_1+1}^{+\infty}\gamma_i(2^i)^{-2^{1-i}}>1/2.
$$
Thus, if $k>m_1$ then $R_k=R_{m_1}\prod_{i=m_1+1}^k\gamma_i(2^i)^{-2^{1-i}}>\frac{R_{m_1}}{2}$.
\end{proof}

We are now in position to prove the theorem.

Let $N+R_{\geq 2}$ be a nonlinear holomorphic deformation of $N$ in a neighborhood of the origin 
in $\Bbb C^n$. We may assume that it is holomorphic in a neighborhood of the closed polydisc 
$D_1$.
Let $m_2=2^{k_0}$ be the smallest power of $2$ which is greater than $\max(m_0,2^{m_1})$ where $m_0$ is the integer 
defined in Proposition $\ref{induction}$.
By a polynomial change of coordinates, we can normalize $N+R_{\geq 2}$ up to order $m_2$ : 
in these coordinates,  $N+R_{\geq 2}$ can be written $NF^{m_2}+R_{m_2+1}$.
If necessary, we may apply a diffeomorphism $a\text{Id}$ with $a \in \Bbb C^*$ sufficiently small so that 
$(NF^{m_2},R_{m_2+1})\in {\cal NF}_{m_2}(1)\times {\cal B}_{m_2+1}(1)$. We define as above the sequence 
$\{R_k\}_{k\geq k_0}$, with $R_{k_0}=1$. Thus, for all integer $k>k_0$, we have $1/2<R_k\leq 1$.

Then following \cite{Stolo-ihes}[p.202-203], we have that for all $k\geq k_0$, that there exists a diffeomorphism $\Psi_k$ of $(\Bbb C^n,0)$ such that 
$\Psi_k^*(NF^{m_2}+R_{m_2+1})=: NF^{2^{k+1}}+R_{2^{k+1}+1}$ is normalized up to order $2^{k+1}$, 
$(NF^{2^{k+1}},R_{2^{k+1}+1})\in {\cal NF}_{2^{k+1}}(R_{k+1})\times {\cal B}_{2^{k+1}+1}(R_{k+1})$ and
$|\text{Id} -\Psi_k^{-1}|_{R_{k+1}}\leq \sum_{p=k_0}^k\frac{1}{2^{2p}}$.

Since $D(1/2)\subset D_{R_{k}}$ for all integers $k\geq k_0$, then the sequence $\{|\Psi_k^{-1}|_{1/2}\}_{k\geq k_0}$ is 
uniformly bounded. Moreover, the sequence $\{\Psi_k^{-1}\}_{k\geq k_0}$ converges coefficient wise to a formal 
diffeomorphism $\hat\Psi^{-1}$ (the inverse of the formal normalizing diffeomorphism). Therefore, this sequence converges 
in ${\cal H}_n^n(r)$ (for all $r<1/2$) to $\hat\Psi^{-1}$ (see \cite{Grauert-L1}). This means that the normalizing 
transformation is holomorphic in a neighborhood of $0\in \Bbb C^n$ and the theorem has been proven.

\newcommand{\etalchar}[1]{$^{#1}$}
\def\cprime{$'$} \def\cprime{$'$}

\end{document}